\newcommand{\p}[1]{{\mathbb{P}^{#1}}}
\newcommand{\op}[1]{{\mathcal O}_{\mathbb{P}^{#1}}}
\newcommand{\pr}{\operatorname{pr}\nolimits}
\newcommand{\cale}{{\mathcal E}}
\newcommand{\calh}{{\mathcal H}}
\newcommand{\cali}{{\mathcal I}}
\newcommand{\calo}{{\mathcal O}}
\DeclareMathOperator{\coker}{coker}
\DeclareMathOperator{\codim}{{codim}}
\DeclareMathOperator{\rk}{{rk}}
\newcommand\dual{{\scriptscriptstyle{\vee}}}
\newlength{\rrrr}
\newcommand{\intoo}[1]{\:
\xymatrix@1{\ar@{^(->}[r]^{#1}&}\:}
\newcommand{\ontoo}[1]{\:
\xymatrix@1{\ar@{->>}[r]^{#1}&}\:}
\newcommand\Ext{\operatorname{Ext}\nolimits}
\newtheorem{theorem}{Theorem}
\newtheorem{corollary}[theorem]{Corollary}
\newtheorem{remark}[theorem]{Remark}
\numberwithin{equation}{section}
\numberwithin{theorem}{section}
\begin{document}

\title{Series of rational moduli components of stable rank 2 vector bundles on $\mathbb{P}^3$}

\author{Alexey A. Kytmanov}
\address{Institute of Space and Information Technology\\
Siberian Federal University\\ 
79 Svobodny Avenue\\
660041 Krasnoyarsk, Russia}
\email{aakytm@gmail.com}

\author{Alexander S. Tikhomirov}
\address{Department of Mathematics\\
National Research University  
Higher School of Economics\\
6 Usacheva Street\\ 
119048 Moscow, Russia}
\email{astikhomirov@mail.ru}

\author{Sergey A. Tikhomirov}
\address{Department of Physics and Mathematics,
Yaroslavl State Pedagogical University named after K.D.Ushinskii,
108 Respublikanskaya Street,
150000 Yaroslavl, Russia\\ 
${\ \ \ \ }$Koryazhma Branch of Northern (Arctic)
Federal University\\
9 Lenin Avenue\\
165651 Koryazhma, Russia}
\email{satikhomirov@mail.ru}

\begin{abstract}
We study the problem of rationality of an infinite series of components, the so-called Ein components, of the Gieseker-Maruyama moduli space $M(e,n)$ of rank 2 stable vector bundles with the first Chern class $e=0$ or -1 and all possible values of the second Chern class $n$ on the projective 3-space. The generalized null correlation bundles constituting open dense subsets of these components are defined as cohomology bundles of monads whose members are direct sums of line bundles of degrees depending on nonnegative integers $a,b,c$, where $b\ge a$ and $c>a+b$. We show that, in the wide range when $c>2a+b-e,\ b>a,\ (e,a)\ne(0,0)$, the Ein components are rational, and in the remaining cases they are at least stably rational. As a consequence, the union of the spaces $M(e,n)$ over all $n\ge1$ contains an infinite series of rational components for both $e=0$ and $e=-1$. Explicit constructions of rationality of Ein components under the above conditions on $e,a,b,c$ and, respectively, of their stable rationality in the remaining cases, are given. In the case of rationality, we construct universal families of generalized null correlation bundles over certain open subsets of Ein components showing that these subsets are fine moduli spaces. As a by-product of our construction, for $c_1=0$ and $n$ even, they provide, perhaps the first known, examples of fine moduli spaces not satisfying the condition ``$n$ is odd'', which is a usual sufficient condition for fineness. 

\noindent{\bf 2010 MSC:} 14D20, 14E08, 14J60

\noindent{\bf Keywords:} rank 2 bundles, moduli of stable bundles, rational varieties

\end{abstract}

\maketitle

\section{Introduction}\label{section 1}

For $e\in\{-1,0\}$ and $n\in\mathbb{Z}_+$, let $M(e,n)$ be the Gieseker-Maruyama moduli space of stable rank 2 algebraic vector bundles with Chern classes $c_1=e,\ c_2=n$ on the projective space $\mathbb{P}^3$. R.~Hartshorne \cite{H-vb} showed that $M(e,n)$ is a quasi-projective scheme, nonempty for arbitrary $n\ge1$ in the case $e=0$ and, respectively, for even $n\ge2$ in the case $e=-1$, and the deformation theory predicts that each irreducible component of $M(e,n)$ has dimension at least $8n-3+2e$.  

In this paper we study the problem of rationality of irreducible 
components of $M(e,n)$. Since 70ies not so much has been known 
about it. In particular, in the case $e=0$, it is known 
(see \cite{H-vb}, \cite{ES}, \cite{B1}, \cite{CTT}, \cite{T1}, 
\cite{T2}) that the scheme $M(0,n)$ contains an irreducible 
component $I_n$ of the expected dimension $8n-3$, and this 
component is the closure of the open subset of $M(0,n)$ 
constituted by the so-called  mathematical instanton vector 
bundles. Furthermore, according to the recent result of 
\cite[Theorem 3]{TTV}, $M(0,n)$ contains, besides $I_n$, at least 
one more irreducible component for any $n\ge146$. Next, $M(0,n)$ 
is irreducible (hence coincides with $I_n$) and rational for 
$n=1, 2$ \cite{H-vb}. The rationality of $I_3$ and of $I_5$ was 
proved in \cite{ES} and \cite{K}, respectively, and for $n=4$ and 
$n\ge6$ the rationality of $I_n$ is still a challenging open 
question. Note that $M(0,n)$ is reducible for $n\ge3$, and the 
exact number of irreducible components of $M(0,n)$ is nowadays 
known only up to $n=5$ \cite{AJTT}. We list these components in 
Section \ref{section 8}. 

In the case $e=-1$, for each $n\ge1$, the space $M(-1,2n)$ contains at least one irreducible component $Y_{2n}$ of the expected dimension $16n-5$ \cite{H-vb}. In particular, $M(-1,2)=Y_2$ is a rational variety of the expected dimension 11 by \cite{HS}. The space $M(-1,4)$ is also known -- it contains, besides the rational component $Y_4$ of the expected dimension 27, one more rational component of dimension 28.
For $n\ge6$ the exact number of irreducible components of $M(0,n)$ is still unknown (see details in Section \ref{section 8}).

In 1978 W.~Barth and K.~Hulek \cite{BH} found, for each integer 
$k\ge1$, a rational $(3k^2+10k+8)$-dimensional family 
$\tilde{Q}_k$ of vector bundles from $M(0,2k+1)$, and 
G.~Ellingsrud and S.~A.~Str\o mme 
in \cite[(4.6)--(4.7)]{ES} showed that the image of 
$\tilde{Q}_k$ under the modular morphism $\tilde{Q}_k\to 
M(0,2k+1)$ is an open subset of an irreducible component $Q_k$ 
distinct from the instanton component $I_{2k+1}$. Besides, from 
the definition of $Q_k$, it follows that it is (at least) 
unirational. Later in 1984, V.~K.~Vedernikov \cite{V1} 
constructed, for $1\le l\le k$, a family $V_1(k,l)$ of bundles from $M(0,n_1)$;  for $1\le2l\le k$, a family $V_2(k,l)$ of bundles from $M(0,n_2)$; for $1\le2l\le k+2$, a family $V_3(k,l)$ of bundles from $M(-1,n_3)$, where 
$n_1,n_2,n_3$ are certain polynomials on $k,l$. In his 
subsequent paper \cite{V2}, one more family $V_4(k)$ of bundles from $M(0,(k+1)^2)$ was found for $k\ge1$. In \cite{V1}, \cite{V2}, the constructions of stable rationality of $V_1(k,l)$ and of rationality of $V_2(k,l)$ and $V_4(k)$ were given, respectively (see Remark \ref{V(k,l)} below for details). Besides, the author asserted that these families are open subsets of irreducible components of $M(e,n)$, though the proofs for these statements were not given. A more general series of rank 2 bundles depending on triples of integers $a,b,c$, appeared in 1984 in the paper of A. Prabhakar 
Rao \cite{Rao} (cf. Remark \ref{big families}). Soon after that, 
in 1988, L.~Ein \cite{Ein} independently studied these bundles 
(called in his paper the generalized null correlation bundles) 
and proved that they constitute open subsets of irreducible components of $M(e,n)$ (called below Ein components). Surprisingly, Ein components contain Vedernikov's families $V_1(k,l)$ and $V_4(k)$, respectively, $V_2(k,l)$ and  $V_3(k,l)$ as their open subsets in special cases when $e=a=0$, respectively, $a=b$ (see details in Remark \ref{V(k,l)}). Moreover, when $e=a=0,\ b=k\ge1,\ c=k+1$, the closure of Vedernikov's family $V_1(k,1)$ coincides with the component $Q_k$  of Ellingsrud-Str\o mme, i. e. $Q_k$ is also an Ein component.

The problem of rationality of Ein components is the main subject 
of this paper. We will prove their rationality in a wide range of 
parameters $a,b,c$ when $c>2a+b-e,\ b>a,\ (e,a)\ne(0,0)$, and 
their (at least) stable rationality in the remaining cases. In 
particular, we show that our results cover Vedernikov's results 
in the case of $e=0,\ a=b>0,\ c>3a$ and improve them in the case 
of $e=a=0,\ b>0$ (see Remark \ref{V(k,l)}). Together with the 
remaining Vedernikov's results, this gives a complete solution to 
the problem of rationality or, otherwise, (at least) stable 
rationality of Ein components for all possible values of 
$e,a,b,c$. Before proceeding to precise formulations, we recall 
briefly the definition of generalized null correlation bundles.

For integers $a,b,c$ with $b\ge a\ge0,\ c>a+b$, consider the monad
\begin{equation}\label{ein monad}
0\to\op3(-c+e)\to\calh\to\op3(c) \to 0,
\end{equation}
where
\begin{equation}\label{calh}
\calh= \op3(a) \oplus \op3(-a+e) \oplus \op3(b) \oplus \op3(-b+e),
\end{equation}
such that the cohomology sheaf $E$ of this monad is locally free. According to \cite[Prop.~3.1]{Rao} (see also \cite[Prop.~1.2(a)]{Ein}), such monads exist and their cohomology rank 2 
vector bundle $E$ is stable. We call $E$ a \emph{generalized 
null correlation bundle} and denote by $N_{\mathrm{nc}}$ the 
set of all generalized null correlation bundles for the above 
integers $e,a,b,c$. Ein shows in \cite{Ein} that 
$N_{\mathrm{nc}}$ is a dense Zariski open subset of an 
irreducible component $\overline{N}(e,a,b,c)$ of the space $M(e,n)$, where $n=c^2-a^2-b^2-e(c-a-b)$. We therefore call these moduli components $\overline{N}(e,a,b,c)$ the \emph{Ein components} of $M(e,n)$.

We give now a sketch of the contents of the paper. In Section 
\ref{section 2}, we begin the study of the Ein component 
$\overline{N}(e,a,b,c)$ for any admissible $e,a,b,c$. We first 
describe a certain dense open subset $N$ specified by the 
behaviour of restrictions of generalized null correlation 
bundles from $N$ onto surfaces $S$ of the linear series 
$\mathrm{P}=|\op3(c-b)|$. (The precise definition of $N$ is 
given in \eqref{descriptn of N}). Using Quot-schemes, we then 
conststruct a certain principal PGL-bundle $Y\to N$ together 
with a family of generalized null correlation bundles over $Y$, 
and, respectively, a variety $X$ with a surjection $\theta: X\to 
N$ which is an open subfibration of some explicitely described 
projective fibration over $N$. These data yield a family 
$\mathbf{E}$ of generalized null correlation bundles over the 
variety $\mathbf{X}=X\times_NY$ induced by the aforementioned 
family. In Section \ref{section 3}, we relate to $\mathbf{E}$ a 
family $\mathbf{F}$ of rank-2 reflexive sheaves. These sheaves 
$F$ are obtained from bundles $E$ of the family $\mathbf{E}$ by 
elementary transformations $E\rightsquigarrow F$ along specially 
chosen surfaces $S$ of degree $c-b$. This is an analogue of the 
so-called reduction step procedure of R.~Hartshorne (cf. Remark 
\ref{reduction step}(i)). 

In Section \ref{section 4.0}, we provide a detailed enough plan 
of the proof of the main result of the paper --- Theorem 
\ref{main Thm} which states that $N$ is a rational variety and a 
fine open subset of the moduli component  $\overline{N}(e,a,b,c)$ if 
$c>2a+b-e,\ b>a,\ (e,a)\ne(0,0)$, and is at least stably 
rational otherwise. The idea is to construct and then relate two 
diagrams of varieties and projections: 
\begin{equation}\label{two diagrams}
W\xrightarrow{\pi}X\xrightarrow{\theta}N\ \ \ \text{and}\ \ \ 
V\xrightarrow{\lambda}T\xrightarrow{\mu}\mathrm{R}\xrightarrow{\mathrm{r}}\mathrm{P}.
\end{equation}
In these diagrams all the projections are open subfibrations of some locally trivial projective fibrations (see diagrams \eqref{diag for X,W} and \eqref{diag for T,B} for details). In particular, $V$ is rational and $W$ is birational to $N\times\mathbb{P}^k$ for certain $k\ge0$. We then relate the two diagrams in \eqref{two diagrams} by constructing an isomorphism 
\begin{equation}\label{isom V to W}
f:W\xrightarrow{\sim}V
\end{equation}
and its inverse morphism  $h=f^{-1}:V\xrightarrow{\sim}W$. On 
the level of sets the maps $f$ and $h$ are given by explicit 
formulas \eqref{set-theoretic def of f} and \eqref{set-theoretic 
def of h}. In a sense, these are just the above mentioned 
elementary transformation $E\rightsquigarrow F$ and its dual  
$F\rightsquigarrow E$. The isomorphism \eqref{isom V to W} then 
immediately yields Theorem \ref{main Thm}: the condition 
$c>2a+b-e,\ b>a,\ (e,a)\ne(0,0)$ by the dimension count leads to 
the isomorphism $\theta\circ\pi:W\xrightarrow{\sim}N$, so that 
$N\simeq V$ is rational; respectively, it is stably rational 
otherwise.

Our plan described in Section \ref{section 4.0} consists of four 
steps, which are developed in full detail in the subsequent 
Sections~\ref{section 4}--\ref{section 7}. In steps 1--3 which 
are performed in Sections \ref{section 4}, \ref{section 5}, and 
\ref{section 6}, we construct the varieties and the projections, 
respectively, $W\xrightarrow{\pi}X$, $T$, and 
$V\xrightarrow{\lambda}T\xrightarrow{\mu}\mathrm{R}\xrightarrow
{\mathrm{r}}\mathrm{P}$ involved in \eqref{two diagrams}. 
Besides, we build new families $\underline{\mathbf{E}}$ and 
$\underline{\mathbf{F}}$ of generalized null correlation bundles 
and, respectively, reflexive sheaves. The interplay between the 
two pairs of families $\mathbf{E}$, $\mathbf{F}$ and 
$\underline{\mathbf{E}}$, $\underline{\mathbf{F}}$ leads to the 
final step 4 of the proof of Theorem \ref{main isom} which is 
completed in Section~\ref{section 7}. Thus, the union of the 
spaces $M(e,n)$ over all $n\ge1$ contains an infinite series of 
rational components (see Corollary \ref{Cor}). As a by-product 
of Theorem~\ref{main Thm}, we show that, for $c_1=0$ and $n$ 
even, the open subsets $N$ of Ein components $\overline{N}$ 
provide, perhaps, the first known examples of fine moduli 
components of rank 2 stable bundles not satisfying the condition 
``$n$ is odd'' -- a usual sufficient condition for fineness (see 
Remark \ref{fine moduli}).
As another application of Theorem~\ref{main Thm}, in 
Section~\ref{section 8} we give a list of known irreducible 
components of $M(e,n)$, including Ein components, for small 
values of $n$, up to $n=20$, specify those of Ein components 
which are rational, respectively, stably rational, for both 
$e=0$ and $e=-1$, and give their dimensions.

\vspace{2mm}
\noindent
\textbf{Conventions and notation}.
\begin{itemize}
\item Everywhere in this paper we work over the base field $\mathbf{k}=\mathbf{\bar{k}}$ of characteristic 0.
\item $\mathbb{P}^3$ is the projective 3-space over $\mathbf{k}$.
\item Given a morphism of schemes $f:X\to Y$ and a coherent sheaf $\mathcal{F}$ on $\p3\times Y$, set 
\[
\mathcal{F}_X:=(\mathrm{id}_{\p3}\times f)^*\mathcal{F}.
\]
This notation will be systematically used throughout the paper. 
\item For any coherent sheaf $\mathcal{G}$ on a scheme $X$,
we set $\mathbb{P}(\mathcal{G}):=\mathrm{Proj} (S^{\cdot}_{\calo_{X}}\mathcal{G})$. Also, $\calo_Y(1)$ denotes the Grothendieck invertible sheaf on $Y=\mathbb{P}(\mathcal{G})$.
\item Given $m,n\in\mathbb{Z}$, $\mathbf{P}$ a projective space 
an arbitrary dimension, $X$ a scheme, and $\mathcal{A}$ a 
coherent sheaf on $\mathbf{P}\times\p3\times X$, set
\begin{equation}\label{Amn}
\mathcal{A}(m,n):=\mathcal{A}\otimes\calo_{\mathbf{P}}(n)
\boxtimes\calo_{\p3}(m)\boxtimes \calo_X,\ \ \ 
\mathcal{A}(m):=\mathcal{A}(m,0).
\end{equation}
\item $M(e,n)$ is the Gieseker-Maruyama moduli space of stable rank 2 algebraic vector bundles on $\mathbb{P}^3$, with Chern classes $c_1=e\in\{-1,0\},\ c_2=n\in\mathbb{Z}_+$ for $e=0$, respectively, $\in2\mathbb{Z}_+$ for $e=-1$.
\item $\overline{N}=\overline{N}(e,a,b,c)$ is the Ein component of the moduli space $M(e,n),\  b\ge a\ge0,\ c>a+b,\ n=c^2-a^2-b^2-e(c-a-b)$.
\item  $N_{\mathrm{nc}}$ is the open dense subset of $\overline{N}$ consisting of generalized null correlation bundles.
\item $N$ is the open dense subset of $N_{\mathrm{nc}}$ defined in \eqref{descriptn of N}.
\item For a stable rank 2 vector bundle $E$ with $c_1(E)=e,\ c_2(E)=n$ on $\p3$, we denote by $[E]$ its isomorphism class in $M(e,n)$. 
\item For a projective $\mathbb{P}^m$-fibration $p:X\to Y$, by
its \textit{open subfibration} we mean an open subset $U$ of $X$, together with the projection $p|_U:U\to Y$.
\end{itemize}

\noindent
\textbf{Acknowledgements}.
AAK was supported by the grant of the President of the Russian Federation for young scientists, project MD-197.2017.1.
AST was supported by a subsidy to the HSE from the Government of 
the Russian Federation for the implementation of Global 
Competitiveness Program. AST also acknowledges the support from 
the Max Planck Institute for Mathematics in Bonn, where this 
work was partially done during the winter of 2017.

\vspace{5mm}

\section{Ein component $\overline{N}(e,a,b,c)$ and its dense open subset $N$}\label{section 2}
 
\vspace{2mm}

In this Section, for an arbitrary Ein moduli component $\overline{N}=\overline{N}(e,a,b,c)$ we introduce a certain dense open subset $N$ of $\overline{N}$ which will be the main object of our study. We then construct a family $\mathbf{E}$ of generalized null correlation bundles on $\mathbb{P}^3$ with base $\mathbf{X}$ covering $N$ under the modular morphism $\mathbf{X}\to N,\ \mathbf{x}\mapsto\left[\mathbf{E}|_{\mathbb{P}^3\times\{\mathbf{x}\}}\right]$ (see Theorem \ref{Thm 2}). This family $\mathbf{E}$ will be used in subsequent sections.

Given integers $e,a,b,c$ with $e\in\{-1,0\}$ and $b\ge a\ge0,\ c>a+b$, consider the Ein component $\overline{N}=\overline{N}(e,a,b,c)$ of $M(e,n),\ n=c^2-a^2-b^2-e(c-a-b)$. As it is known from \cite[(2.2.B) and Section 3]{Ein} (see also \cite[Section 5]{B}), $\dim \overline{N}=h^0(\calh(c-e))-h^0(S^2\calh(-e))-1$.
Substituting here $\calh$ from (\ref{calh}), we obtain:
\begin{equation}\label{dim N 1}
\begin{split}
& \dim\overline{N}=
\binom{c+a-e+3}{3}+\binom{c+b-e+3}{3}+\binom{c-a+3}{3}\\
&+\binom{c-b+3}{3}-\binom{a+b-e+3}{3}-\binom{b-a+3}{3}-\binom{2a-e+3}{3}\\
&-\binom{2b-e+3}{3}-3-t(e,a,b), 
\end{split}
\end{equation}
where
\begin{equation}\label{t(e)}
t(0,a,b)=\left\{
\begin{array}{cl}
4, & \text{if}\ a=b=0, \\
1, & \text{if}\ 0=a<b\ \text{or}\ a=b>0, \\
0, & \text{otherwise}.
\end{array}\right. \quad
t(-1,a,b)=\left\{
\begin{array}{cl}
1, & \text{if}\ a=b, \\
0, & \text{otherwise}.
\end{array}\right.
\end{equation}
Consider the open dense subset $N_{\mathrm{nc}}$ of $\overline{N}$ consisting of generalized null correlation bundles. From (\ref{ein monad})--(\ref{calh}), we have
\begin{equation*}\label{h1E(m)}
h^1(E(m))=h^0(\calo_{\p3}(c+m))-h^0(\calo_{\p3}(a+m))-h^0(\calo_{\p3}(b+m)),\ \ \ m\le-1
\end{equation*}
for any bundle $[E]\in N_{\mathrm{nc}}$. In particular,
\begin{equation}\label{h1E(-c)}
h^1(E(-c))=1,
\end{equation}
\begin{equation}\label{h1E(-b)}
h^1(E(-b))=\binom{c-b+3}{3}-1,\ \ \ b>0.
\end{equation}

Consider more closely the monad \eqref{ein monad} with cohomology bundle $[E]\in N_{\mathrm{nc}}$:\\
\begin{equation}\label{lambda,mu}
\begin{split}
& 0\to \op3(-c+e) \stackrel{\lambda}{\longrightarrow}\calh
\stackrel{\mu}{\longrightarrow}\op3(c) \to 0, \ \ \ where\\
& \lambda=(f_2,-f_1,f_4,-f_3)^t,\ \ \ \mu=(f_1,f_2,f_3,f_4),\ \ \ \mu\circ\lambda=0,
\end{split}
\end{equation}
\begin{equation}\label{f1,...,f4}
\begin{split}
& f_1\in V_1:=H^0(\calo_{\p3}(c-a)),\ \ \ 
f_2\in V_2:=H^0(\calo_{\p3}(c+a-e)),\\ 
& f_3\in V_3:=H^0(\calo_{\p3}(c-b)),\ \ \ 
f_4\in V_4:=H^0(\calo_{\p3}(c+b-e)).
\end{split}
\end{equation}

Moreover, since $\mu$ is surjective, it follows that the 
subset $\cap_{i=1}^4\{f_i(x)=0\}$ of $\p3$ is empty. In 
particular, polynomials $f_1$ and $f_3$ do not have common 
factors of positive degree. This implies, in particular, that 
the surfaces 
\begin{equation}\label{S,S'}
S:=\{f_3(x)=0\} \ \ \ \mathrm{and}\ \ \ S':=\{f_1(x)=0\}
\end{equation}
intersect in a curve
\begin{equation}\label{C=S cap S'}
C_0:=S\cap S'.
\end{equation}
Note that, for the surface $S$ defined in \eqref{S,S'}, the equality
\begin{equation}\label{h0ES(-b)ne0}
h^0(E(-b)|_S)>0.
\end{equation}
holds. Indeed, the sheaf $K:=\ker(\mu)(-b)|_S$ satisfies the 
exact triple $0\to\calo_S(-c-b+e)\to K\to E|_S\to0$. Now, by the 
definition of $S$ the composition 
$\calo_S\overset{i}{\hookrightarrow} 
\mathcal{H}\xrightarrow{\mu(-b)|_S}\calo_S(c-b)$ is the zero 
morphism. Hence $i$ factors through a non-zero morphism 
$\calo_S\to K$, i. e. $h^0(K)\ne0$. Therefore, passing to 
sections in the above triple and using the vanishing of 
$h^0(\calo_S(-c-b+e))$, we obtain \eqref{h0ES(-b)ne0}.

Now consider the space $M=M(e,a,b,c)$ of monads \eqref{ein monad}:
\begin{equation}\label{def of M}
M:=\{(f_1,f_2,f_3,f_4)\in\Pi_{i=1}^4V_i\ |\  \eqref{lambda,mu}\ \text{is true and}\ \cap_{i=1}^4\{f_i(x)=0\}=\emptyset\}.
\end{equation}
There is a well-defined modular morphism
\[
\rho:\ M\twoheadrightarrow N_{\mathrm{nc}},\ \ \ (f_1,f_2,f_3,f_4)\mapsto[\ker(\mu)/\mathrm{im}(\lambda)].
\]
Clearly, $M$ is an open subset of the affine space
$\Pi_{i=1}^4V_i$, hence it is irreducible. Consider its dense
open subset
\begin{equation}\label{def of Ms}
\begin{split}
& M_s=\{(f_1,\ldots,f_4)\in M\ |\ \text{surface}\ S=\{f_3(x)=0\}\ \text{in}\ \eqref{S,S'}\\
& \text{and curve}\ C_0=\{f_1(x)=f_3(x)=0\}\ \text{in}\ \eqref{C=S cap S'}\ \text{are smooth}\}.
\end{split}
\end{equation}
Since $N_{\mathrm{nc}}$ is irreducible, there exists a dense 
open subset $N_s$ of $N_{\mathrm{nc}}$ contained in 
$\rho(M_s)$:
\begin{equation}\label{def of Ns}
\xymatrix{
&    \rho(M_s)\ar@{^{(}->}[dr]   &   \\
N_s\ \ar@{^{(}->}[rr]^-{\mathrm{dense\ open}}
\ar@{^{(}->}[ur] & & N_{\mathrm{nc}} }
\end{equation}
\begin{remark}\label{choice of Ns}
The choice of the subset $N_s$ satisfying \eqref{def of Ns} is not unique. From now on we, therefore, assume that, for each collection of admissible values of $e,a,b,c$, $N_s$ is a maximal (with respect to inclusion) such subset.
\end{remark}

Next, there exists a big enough positive integer $m$ such that all bundles from $N_{\mathrm{nc}}$ are $m$-regular in the sense of Mumford-Castelnuovo \cite[Section 4.3]{HL}. Let $\mathcal{P}\in\mathbb{Q}[x]$ be the Hilbert polynomial $\mathcal{P}(k)=\chi(E(k)),\ [E]\in N_{\mathrm{nc}}$, and let $\boldsymbol{\mathcal{B}}:=\mathbf{k}^{N_m}
\otimes\op3(-m)$, where $N_m:=\mathcal{P}(m)$. Consider the Quot-scheme $Q:=\mathrm{Quot}_{\p3}(\boldsymbol{\mathcal{B}}, \mathcal{P})$, together with the universal quotient morphism $\boldsymbol{\mathcal{B}}\boxtimes\calo_{Q}\twoheadrightarrow \mathbb{E}$. Then, the scheme 
\[
\mathcal{Y}=\Bigl\{y\in Q\ |\ \left[\mathbb{E}|_{\p3\times\{y\}}\right]\in N_s\Bigr\}
\]
is an open subscheme of $Q$, together  with a family
\begin{equation*}\label{family bbE}
\mathbb{E}_{\mathcal{Y}}=\mathbb{E}|_{\p3\times \mathcal{Y}}
\end{equation*}
of generalized null correlation bundles over $\mathcal{Y}$. Since all bundles from $N_s$ are stable, then, according to the GIT-construction \cite[Section 4.3]{HL} of $N_s$, the modular morphism
\begin{equation}\label{varphi}
\varphi:\ \mathcal{Y}\to N_s=\mathcal{Y}//G,\ \ y\mapsto\left[\mathbb{E}_\mathcal{Y}|_{\p3\times \{y\}}\right],
\ \ \ \ G=PGL(N_m),
\end{equation}
is a geometric $G$-quotient and a principal $G$-bundle. 

Since by Serre duality, for any $[E]\in N_s$ one has $h^2(E(c-e-4))=h^1(E(-c)),\ h^2(E(b-e-4))=h^1(E(-b))$, using (\ref{h1E(-c)}), (\ref{h1E(-b)}), and the base change we obtain that the sheaves 
\begin{equation}\label{L,L'}
L=R^2p_{2*}\mathbb{E}_{\mathcal{Y}}(c-e-4),\ \ \ 
L'=R^2p_{2*}\mathbb{E}_{\mathcal{Y}}(b-e-4), 
\end{equation}
where $p_2:\p3\times \mathcal{Y}\to \mathcal{Y}$ is the projection, are locally free $\calo_{\mathcal{Y}}$-sheaves of ranks 
\begin{equation}\label{rk Li}
\rk L=1,\ \ \ \mathbf{r}:=\rk L'=\binom{c-b+3}{3}-1.
\end{equation}

Consider the linear series 
\begin{equation}\label{bfP}
\mathbf{P}:=|\calo_{\p3}(c-b)|
\end{equation}
and its dense open subset
\begin{equation}\label{rmP}
\mathrm{P}:=\{S\in\mathbf{P}\ |\ S\ \text{is a smooth surface} \}.
\end{equation}
Let
\begin{equation}\label{Gamma}
\Gamma=\left\{(S,x)\in\mathbf{P}\times\p3\ |\ x\in S\right\}
\end{equation}
be the universal family of surfaces of degree $c-b$ in $\p3$. There
is an exact triple on $\mathbf{P}\times\p3\times \mathcal{Y}$:
\begin{equation}\label{exact0}
0\to\calo_{\mathbf{P}}(-1)\boxtimes\calo_{\p3}(b-c)
\boxtimes\calo_{\mathcal{Y}}\to\calo_{\mathbf{P}}\boxtimes
\calo_{\p3}\boxtimes\calo_{\mathcal{Y}}\to
\calo_{\Gamma\times \mathcal{Y}}\to0.
\end{equation}
Tensoring it with the sheaf 
$\mathbb{E}_{\mathcal{Y}}(c-e-4)\boxtimes\calo_{\mathbf{P}}$ 
and applying to the resulting exact triple the functor $R^ipr_{13*}$, where $pr_{13}:\mathbf{P}\times\p3\times \mathcal{Y}\to \mathbf{P}\times \mathcal{Y}$ is a projection, in view of the base change and the equalities $h^3\bigl(E(b-e-4)\bigr)=0$ we obtain an exact triple
\begin{equation}\label{exact1}
\calo_{\mathbf{P}}(-1)\boxtimes L'\overset{\psi}{\to}
\calo_{\mathbf{P}}\boxtimes L\to R^2pr_{13*}\left(\calo_{\mathbf{P}}\boxtimes
\mathbb{E}_{\mathcal{Y}}(c-e-4)|_{\Gamma\times \mathcal{Y}}\right)\to0.
\end{equation}
Now take an arbitrary point $y\in \mathcal{Y}$ and denote
\begin{equation*}\label{Ey}
[E_y]:=\varphi(y).
\end{equation*}
Resricting the triple (\ref{exact1}) onto $\mathbf{P}\times\{y\}$ and using (\ref{rk Li}) and the base change, we obtain an exact triple
\begin{equation}\label{exact2}
\mathbf{r}\calo_{\mathbf{P}}(-1)\xrightarrow{\psi\otimes
\mathbf{k}(y)}\calo_{\mathbf{P}}\to\coker\bigl(\psi\otimes
\mathbf{k}(y)\bigr)\to0,
\end{equation}
where by the base change we have for any surface $S\in\mathbf{P}$:
\begin{equation}\label{coker psi}
\coker\bigl(\psi\otimes\mathbf{k}(y)|_{\{(S,y)\}}\bigr)=
R^2pr_{23*}\left(\mathbb{E}(c-e-4)\boxtimes
\calo_{\mathbf{P}}|_{\Gamma\times Y}\right)|_{\{(S,y)\}}=H^2\bigl(E_y(c-e-4)|_S\bigr).
\end{equation}
From the triple (\ref{exact2}), it follows that
\begin{equation}\label{h2 le1}
h^2\bigl(E_y(c-e-4)|_S\bigr)\le1.
\end{equation}
On the other hand, the Grothendieck-Serre duality for a locally free $\calo_S$-sheaf $E_y|_S$ yields
\begin{equation}\label{h2=h0}
h^2\bigl(E_y(c-e-4)|_S\bigr)=h^0\bigl(E_y(-b)|_S\bigr).
\end{equation}

Next, the triple (\ref{exact2}) shows that 
\begin{equation}\label{P(y)}
\mathbf{P}(y):=\mathrm{Supp}(\coker(\psi\otimes
\mathbf{k}(y)))
\end{equation}
is a linear subspace of codimension at most $\mathbf{r}=\dim\mathbf{P}$ in $\mathbf{P}$. Hence this subspace $\mathbf{P}(y)$ is always nonempty, (\ref{coker psi})--(\ref{h2=h0}) give the following explicit description of $\mathbf{P}(y)$:
\begin{equation}\label{Py}
\mathbf{P}(y)=\{S\in\mathbf{P}|\ h^0(E_y(-b)|_S)=1\}.
\end{equation} 
Set $\tau(y)=\dim\mathbf{P}(y)$ and let

\begin{equation}\label{tau}
\tau:= \underset{y\in \mathcal{Y}}{\min}~\tau(y),\ \ \ 
Y:=\{y\in \mathcal{Y}|\ \tau(y)=\tau\}.
\end{equation}

Since $\mathcal{Y}$ is irreducible, the semicontinuity yields that $Y$ is a dense open subset of $\mathcal{Y}$. Moreover, from (\ref{varphi}) it follows that there exists a dense open subset $N$ of $N_s$, hence also of
$N_{\mathrm{nc}}$ and of $\overline{N}$: 
\begin{equation}\label{def of N}
\xymatrix{
N\ \ar@{^{(}->}[rrr]^-{\mathrm{dense\ open}} & & & N_s}
\end{equation}
defined by the fact that
\begin{equation}\label{N via Y}
Y=\varphi^{-1}\bigl(N\bigr)=\mathcal{Y}\times_{N_s}N\ \ \ \text{and}\ 
\ \ \varphi:Y\to N\ \text{is a principal}\ G\text{-bundle}.
\end{equation}  
The set $N$ is explicitly described as follows. For any point $[E]\in N_{\mathrm{nc}}$, consider the exact triple
\[
0\to E(b-c)\boxtimes\calo_{\mathbf{P}}(-1)\to E\boxtimes\calo_{\mathbf{P}}\to E\boxtimes\calo_{\mathbf{P}}|_{\Gamma}\to0
\]
and apply to it the functor $R^i\pr_{2*}$, where $\pr_2:\p3\times\mathbf{P}\to\mathbf{P}$ is the projection.
Then, similar to (\ref{exact2}), we obtain an exact triple
\begin{equation*}\label{exact3}
\mathbf{r}\calo_{\mathbf{P}}(-1)\overset{\psi_E}{\to}
\calo_{\mathbf{P}}\to\coker\psi_E\to0.
\end{equation*}
Similar to the above, set $\mathbf{P}\bigl([E]\bigr):=\mathrm{Supp}(\coker\psi_E)$, $\tau_E:=\dim\mathbf{P}([E])$. Then, as in (\ref{Py})--(\ref{tau}), we have 
\begin{equation}\label{PE}
\mathbf{P}\bigl([E]\bigr)=\{S\in\mathbf{P}|\ h^0(E(-b)|_S)=1\} \ \ \ \ \ \ 
\underset{[E]\in N_s}{\min}\tau_E=\tau,
\end{equation}
and
\begin{equation}\label{descriptn of N}
N=\{[E]\in N_s\ |\ \tau_E=\tau\}.
\end{equation}
Denote \begin{equation}\label{rmP([E])}
\mathrm{P}([E])=\mathrm{P}\cap\mathbf{P}\bigl([E]\bigr),
\end{equation} 
where $\mathrm{P}$ was defined in \eqref{rmP}. 
From \eqref{h0ES(-b)ne0}, \eqref{PE}, and the definition of $N_s$, it follows that $\mathrm{P}([E])$ is a nonempty, hence dense open subset of $\mathbf{P}\bigl([E]\bigr)$:
\begin{equation}\label{dense open P}
\xymatrix{
\mathrm{P}([E])\ \ar@{^{(}->}[rrr]^-{\mathrm{dense\ open}} & & & \mathbf{P}\bigl([E]\bigr)},\ \ \ \ \ \ \ [E]\in N_s.
\end{equation}

Now consider the subscheme $\boldsymbol{\mathcal{X}}$ of 
$\mathbf{P}\times \mathcal{Y}$ together with the projection $\boldsymbol{\theta}:\boldsymbol{\mathcal{X}}\to \mathcal{Y}$, defined as
\begin{equation}\label{bold cal X}
\boldsymbol{\mathcal{X}}:=\{\mathbf{x}=(S,y)\in \mathbf{P}\times \mathcal{Y}\ |\ S\in\mathbf{P}(y)\},\ \ \ \boldsymbol{\theta}:
\boldsymbol{\mathcal{X}}\to \mathcal{Y},\ (S,y)\mapsto y.
\end{equation}
Remark that, as $\rk L=1$ by (\ref{rk Li}), the triple (\ref{exact1}) 
twisted by $\calo_{\mathbf{P}}(1)\boxtimes L^{\vee}$
can be rewritten as
\begin{equation}\label{triple 33}
\calo_{\mathbf{P}}\boxtimes (L'\otimes L^\vee)
\overset{\psi}{\to}\calo_{\mathbf{P}}(1)\boxtimes 
\calo_{\mathcal{Y}}\to\mathbb{B}\to0,
\end{equation}
where
\begin{equation*}\label{line bdl G}
\mathbb{B}:=\calo_{\mathbf{P}}(1)\boxtimes \calo_{\mathcal{Y}}
|_{\boldsymbol{\mathcal{X}}}
\end{equation*}
is a line bundle on $\boldsymbol{\mathcal{X}}$. In view of (\ref{Py}) and (\ref{bold cal X}) the fibre of $\mathbb{B}$ over an arbitrary point $\mathbf{x}=(S,y)\in\boldsymbol{\mathcal{X}}$ has the description
\begin{equation}\label{fibre of G}
\mathbb{B}\otimes\mathbf{k}(\mathbf{x})=H^0(E_y(-b)|_S). 
\end{equation}
Applying to (\ref{triple 33}) the functor $p_{2*}$, where 
$p_2:\mathbf{P}\times \mathcal{Y}\to \mathcal{Y}$ is the projection, we obtain an exact triple
\begin{equation}\label{triple 35}
L'\otimes L^\vee\overset{f}{\to} S^{c-b}\mathcal{V}\otimes\calo_{\mathcal{Y}}
\to\mathbb{U}\to0,\ \ \ \mathbb{U}=p_{2*}\mathbb{B},
\end{equation}
where $\mathcal{V}=H^0(\op3(1))^{\vee}$, $f=p_{2*}\psi$ and 
\begin{equation}\label{bf scr X}
\boldsymbol{\mathcal{X}}=\mathbb{P}(\mathbb{U}).
\end{equation} 
In addition, 
$\mathbb{B}=\calo_{\mathbb{P}(\mathbb{U})}(1)$, and there is the canonical epimorphism
\begin{equation}\label{U onto G}
p_2^*\mathbb{U}\twoheadrightarrow\mathbb{B}.
\end{equation}
Remark that, since $\mathbb{E}$ has a natural $GL(N_m)$-linearization as a sheaf over $Q$, the sheaf 
$L'\otimes L^\vee$ has an induced $GL(N_m)$-linearization,
and the sheaf $S^{c-b}V\otimes\calo_{\mathcal{Y}}$ also has a (trivial) $GL(N_m)$-linearization. Hence by (\ref{triple 35}) the
sheaf $\mathbb{U}$ also inherits $GL(N_m)$-linearization. It follows that $\boldsymbol{\mathcal{X}}$ inherits $G$-action
such that $\boldsymbol{\theta}:\boldsymbol{\mathcal{X}}\to \mathcal{Y}$
is a $G$-equivariant morphism. Hence the geometric quotient 
\begin{equation*}\label{tilde X}
\mathcal{X}:=\boldsymbol{\mathcal{X}}//G 
\end{equation*}
is well-defined, and the canonical projection
\begin{equation*}\label{Phi}
\Phi:\ \boldsymbol{\mathcal{X}}\to \mathcal{X}
\end{equation*}
is a principal $G$-bundle. 

Furthermore, comparing (\ref{Py}) with (\ref{PE}), we see that, for any $[E]\in N_{\mathrm{nc}}$ and any $y\in\varphi^{-1}\bigl([E]\bigr)$ the fibre $\boldsymbol{\theta}^{-1}(y)=\mathbf{P}(y)$ as a subspace of 
$\mathbf{P}$ coincides with a subspace 
$\mathbf{P}\bigl([E]\bigr)$ of $\mathbf{P}$, and hence depends 
only on $[E]$. This implies that: (i) $\boldsymbol{\theta}$ is 
a $G$-equivariant morphism and therefore induces a morphism 
of categorical quotients $\theta_s: \mathcal{X}\to N_s$; 
(ii) a fibre $\theta_s^{-1}\bigl([E]\bigr)$ is a subspace 
$\mathbf{P}\bigl([E]\bigr)$ of $\mathbf{P}$. 
Thus $\theta_s:\mathcal{X}\to N_s$ is a $\mathbb{P}^{\tau}$-subfibration of the trivial fibration $\mathrm{P}\times N_s\to N_s$. 
Hence it is locally trivial.

Next, since $f=p_{2*}\psi$, we can rewrite (\ref{P(y)})
as
\begin{equation*}
\boldsymbol{\theta}^{-1}(y)=\mathbf{P}(y)=
P(\coker(\psi\otimes\mathbf{k}(y))),\ \ \ y\in \mathcal{Y}.
\end{equation*}
Set 
\begin{equation}\label{def X}
X:=\theta_s^{-1}(N)\cap P\times N,\ \ \ \ \ \ \theta:=\theta_s|_X: X\to N.
\end{equation}
By definition, $\theta: X\to N$ is a morphism with a fibre 
$\theta^{-1}([E])$ over an arbitrary point $[E]\in N$ 
being an open dense subset $\mathrm{P}([E])$ of subspace $\mathbf{P}([E])\simeq\mathbb{P}^{\tau}$ of $\mathrm{P}$
(see \eqref{dense open P}). Hence $\theta:X\to N$ is an open subfibration of the locally trivial $\mathbb{P}^{\tau}$-fibration $\theta_s: \mathcal{X}\to N_s$. 
Hence $\theta$ is also locally trivial. Furthermore, since $N$ is irreducible, it follows that $X$ is also irreducible.

We now arrive at the following result.

\begin{theorem}\label{Thm 2} 
(i) Let $X$ be defined in \eqref{def X}.
There is an open subfibration $\theta:\ X\to N$ of a locally trivial $\mathbb{P}^{\tau}$-fibration, and a fibre $\mathrm{P}([E])=\theta^{-1}([E])$ over an arbitrary point $[E]\in N$ is given by \eqref{rmP([E])}.
In other words, the set of closed points of the scheme $X$ is described as
\begin{equation}\label{tilde X tau}
X=\{(S,[E])\in \mathrm{P}\times N\ |\ h^0(E(-b)|_S)=1\}.
\end{equation}
In particular,
\begin{equation}\label{dim X}
\dim X=\dim \overline{N}+\tau,
\end{equation}
where $\dim\overline{N}$ is given by formula (\ref{dim N 1}).\\
(ii) Set $Y=\mathcal{Y}\times_{N_s}N$. There are cartesian diagrams
\begin{equation}\label{tilde Y,tilde X}
\xymatrix{ 
\boldsymbol{\mathcal{X}} \ar[d]_{\boldsymbol{\theta}}\ar[r]^{\Phi} & \mathcal{X} \ar[d]^{\theta_s} \\
\mathcal{Y}\ar[r]^-{\varphi} & N_s,}
\ \ \ \ \ \ \ \ \ 
\xymatrix{ 
\mathbf{X} \ar[d]_{\boldsymbol{\theta}}\ar[r]^{\Phi} & X \ar[d]^{\theta} \\ Y\ar[r]^-{\varphi} & N,}
\end{equation}
in which horizontal maps are principal $G$-bundles. Here the second diagram is obtained from the first via the commutative 
diagram
\begin{equation}\label{X in Xs}
\xymatrix{ 
\mathcal{X} \ar[d]_{\theta_s} & & X \ar[d]^{\theta} \ar@{_{(}->}[ll]_-{\mathrm{dense\ open}} \\ N_s & & N\ar@{_{(}->}[ll]_-{\mathrm{dense\ open}}.}
\end{equation}
Furthermore, vertical maps in the second diagram are open subfabrations of locally trivial $\mathbb{P}^{\tau}$-fibrations.\\
(iii) The composition $\mathbf{X}\overset{\mathrm{open}}{\hookrightarrow}\boldsymbol{\mathcal{X}}\overset{\boldsymbol{\theta}}{\to}\mathcal{Y} \hookrightarrow Q$ 
induces a family  
\begin{equation}\label{mathbf E}
\mathbf{E}=\mathbb{E}_{\mathbf{X}}
\end{equation} 
of generalized null correlation bundles on $\p3$ with base $\mathbf{X}$, where $\mathbb{E}$ is the universal quotient sheaf on $\p3\times Q$. \end{theorem}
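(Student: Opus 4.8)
The plan is to obtain Theorem~\ref{Thm 2} by packaging the constructions carried out above in this section, so that each of its three items reduces to facts already in place. For (i): the discussion preceding the theorem shows that $\theta_s:\mathcal{X}\to N_s$ is a locally trivial $\mathbb{P}^{\tau}$-fibration whose fibre over $[E]\in N_s$ is the linear subspace $\mathbf{P}([E])\subseteq\mathbf{P}$, this being a consequence of the presentation $\boldsymbol{\mathcal{X}}=\mathbb{P}(\mathbb{U})$ in \eqref{bf scr X}, the $G$-linearization of $\mathbb{U}$, and the fact that $\boldsymbol{\theta}^{-1}(y)=\mathbf{P}(y)$ depends only on $[E]=\varphi(y)$. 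Hence for $X=\theta_s^{-1}(N)\cap\mathrm{P}\times N$ with $\theta=\theta_s|_X$ (see \eqref{def X}) the fibre over $[E]\in N$ is $\mathrm{P}\cap\mathbf{P}([E])=\mathrm{P}([E])$, which by \eqref{dense open P} is a dense open subset of $\mathbf{P}([E])\simeq\mathbb{P}^{\tau}$; so $\theta$ is an open subfibration of the locally trivial $\mathbb{P}^{\tau}$-fibration $\theta_s^{-1}(N)\to N$. Substituting the description \eqref{PE} of $\mathbf{P}([E])$ and using \eqref{rmP}, \eqref{rmP([E])} gives the set-theoretic formula \eqref{tilde X tau}, and since every fibre of $\theta$ has dimension $\tau$ while $N$ is dense open in $\overline{N}$, one obtains $\dim X=\dim N+\tau=\dim\overline{N}+\tau$ as in \eqref{dim X}, with $\dim\overline{N}$ given by \eqref{dim N 1}.

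For (ii), the first square of \eqref{tilde Y,tilde X} is obtained from the fact that $\boldsymbol{\mathcal{X}}=\mathbb{P}(\mathbb{U})$ carries the $GL(N_m)$-linearization induced, via \eqref{triple 33}, \eqref{triple 35} and \eqref{U onto G}, from that of $\mathbb{E}$ over $Q$: since $\varphi:\mathcal{Y}\to N_s$ is a principal $G$-bundle and $\boldsymbol{\theta}$ is $G$-equivariant with fibres constant along $G$-orbits, working locally on $N_s$ one identifies $\boldsymbol{\mathcal{X}}\simeq\mathcal{X}\times_{N_s}\mathcal{Y}$ with $G$ acting through the second factor, which is precisely the statement that the square is cartesian with $\Phi$ a principal $G$-bundle. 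The second square is then deduced by base change along the dense open immersions displayed in \eqref{X in Xs}: with $Y=\mathcal{Y}\times_{N_s}N=\varphi^{-1}(N)$ (again a principal $G$-bundle over $N$ by \eqref{N via Y}) and $\mathbf{X}=X\times_N Y$, the horizontal maps remain principal $G$-bundles, as a principal bundle restricts to one under any base change and over any $G$-invariant open subset. By (i) the map $\theta:X\to N$ is an open subfibration of a locally trivial $\mathbb{P}^{\tau}$-fibration, and $\boldsymbol{\theta}:\mathbf{X}\to Y$ is its pullback along $\varphi$, hence is one as well.

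For (iii), the composite $g:\mathbf{X}\hookrightarrow\boldsymbol{\mathcal{X}}\xrightarrow{\boldsymbol{\theta}}\mathcal{Y}\hookrightarrow Q$ is a morphism, and I would set $\mathbf{E}:=\mathbb{E}_{\mathbf{X}}=(\mathrm{id}_{\p3}\times g)^*\mathbb{E}$. Since $g$ factors through the open subscheme $\mathcal{Y}\subseteq Q$, over which $\mathbb{E}$ restricts to the family $\mathbb{E}_{\mathcal{Y}}$ of generalized null correlation bundles, the pullback $\mathbf{E}$ is $\calo_{\p3\times\mathbf{X}}$-flat with each fibre $\mathbf{E}|_{\p3\times\{(S,y)\}}\simeq E_y$ a generalized null correlation bundle, so $\mathbf{E}$ is such a family; moreover its modular morphism $(S,y)\mapsto[E_y]$ equals $\varphi\circ\boldsymbol{\theta}=\theta\circ\Phi:\mathbf{X}\to N$, so $\mathbf{X}$ covers $N$.

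The only genuinely delicate point is the first cartesian square of \eqref{tilde Y,tilde X}: one must verify that the $G$-linearized projective bundle $\boldsymbol{\mathcal{X}}=\mathbb{P}(\mathbb{U})\to\mathcal{Y}$ descends along $\varphi$ to $\theta_s:\mathcal{X}\to N_s$ so that $\boldsymbol{\mathcal{X}}=\mathcal{X}\times_{N_s}\mathcal{Y}$. This rests on the compatibility of the linearizations of $L'\otimes L^{\vee}$, $S^{c-b}\mathcal{V}\otimes\calo_{\mathcal{Y}}$, $\mathbb{U}$ and $\mathbb{B}$ recorded around \eqref{triple 35} and \eqref{U onto G}, together with the orbit-invariance of the fibres $\boldsymbol{\theta}^{-1}(y)=\mathbf{P}(y)$. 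Everything else in the proof amounts to bookkeeping of open immersions, base changes, and fibre-dimension counts.
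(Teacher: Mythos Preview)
Your proposal is correct and matches the paper's own treatment: Theorem~\ref{Thm 2} is stated in the paper as a summary of the constructions already carried out in Section~\ref{section 2} (the paper writes ``We now arrive at the following result'' and gives no separate proof), and you have accurately identified which earlier facts support each item. Your added remark isolating the descent of $\mathbb{P}(\mathbb{U})$ along $\varphi$ as the one substantive point, with the rest being base change and bookkeeping, is a fair assessment and slightly more explicit than the paper itself.
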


\vspace{5mm}

\section{Family $\mathbf{E}$ of generalized null correlation bundles and related family of reflexive sheaves $\mathbf{F}$ on $\p3$}
\label{section 3}
 
\vspace{2mm}

In the first part of this section we study more closely generalized null correlation bundles $E$ of the family $\mathbf{E}$ introduced in Theorem \ref{Thm 2}(iii). In the second part we associate to $\mathbf{E}$ a family $\mathbf{F}$ of reflexive rank 2 sheaves on $\p3$. These two families will play the main role in subsequent constructions.

Consider an arbitrary sheaf $[E]\in N$. By definition (see (\ref{ein monad})--(\ref{calh})), the sheaf $E$ is the cohomology sheaf of the monad \eqref{lambda,mu} with the data \eqref{f1,...,f4}. From the definition of $N$ (see \eqref{def of Ms}--\eqref{def of Ns} and \eqref{tau}--\eqref{N via Y}), it follows that the monad  \eqref{lambda,mu} can be chosen in such a way that the related surface $S$ and the curve $C_0$ defined by \eqref{S,S'} and \eqref{C=S cap S'} are both smooth (hence irreducible).
In particular, $C_0$
is a smooth irreducible complete intersection curve with the conormal sheaf $N_{C_0/\p3}^{\vee}\simeq\calo_{C_0}(a-c)\oplus
\calo_{C_0}(b-c)$. 
Besides, (\ref{lambda,mu})--(\ref{C=S cap S'}) yield:
\begin{equation}\label{OS(C)}
\calo_S(C_0)\simeq\calo_{\p3}(S')|_S\simeq\calo_S(c-a).
\end{equation}
Furthermore, by \cite[Example 3.3]{Rao}, there is a well defined quotient sheaf $\calo_{C_0}(a+b-e)$ of $N_{C_0/\p3}^{\vee}$, 
\begin{equation}\label{quotient of N}
N_{C_0/\p3}^{\vee}=\calo_{C_0}(a-c)\oplus\calo_{C_0}(b-c)
\twoheadrightarrow\calo_{C_0}(a+b-e),
\end{equation} 
which determines a double scheme structure $\overline{C}_0$ on $C_0$ with the following properties: 

(i) the curve $\overline{C}_0$ is a locally complete intersection curve satisfying the exact triple
\begin{equation}\label{double str on C}
0\to\calo_{C_0}(a+b-e)\to\calo_{\overline{C}_0}\to\calo_{C_0}
\to0;
\end{equation}

(ii) $\overline{C}_0$ is the zero-scheme of some section of the sheaf $E(c-a-b)$:
\begin{equation}\label{zero-set}
\overline{C}_0=(s)_0,\ \ \ 0\ne s\in H^0(E (c-a-b)).
\end{equation}
Remark that (\ref{zero-set}) implies an exact triple
\begin{equation}\label{s, alpha}
0\to\op3(a+b-c)\overset{s}\to E\overset{\alpha}{\to}
\cali_{\overline{C}_0}(c-a-b+e)\to0.
\end{equation}
Note first that, since $c-a-e>0$, it follows that, in 
(\ref{quotient of N}), the quotient sheaf $\calo_{C_0}(a+b-e)$ 
does not coincide with the direct summand $\calo_{C_0}(b-c)$ 
of the conormal sheaf $N_{C_0/\p3}^{\vee}$, so that the curve 
$\overline{C}_0$ defined in (\ref{quotient of 
N})--(\ref{zero-set})
is not a subscheme of the surface $S$. Therefore, the sheaf 
$\kappa=\ker(\calo_{C'_0}\twoheadrightarrow\calo_{C_0})$,
where the scheme $C'_0$ is defined as the scheme-theoretic intersection $C'_0=\overline{C}_0\cap S$, has dimension at most zero:
\begin{equation}\label{dim kappa le0}
0\to\kappa\to\calo_{C'_0}\to\calo_{C_0}\to0,\ \ \ \ \ \dim\kappa\le0.
\end{equation}
(Here, the inequality $\dim\kappa\le0$ is provided by  smoothness and irreducibility of the curve $C_0$.)
This together with (\ref{OS(C)}) implies an exact triple
\begin{equation}\label{triple IZS}
0\to\cali_{Z,S}(e-b)\to\calo_S(c-a-b+e)\to
\calo_{C'_0}(c-a-b+e)\to0
\end{equation}
and a relation $\kappa\simeq\calo_Z$ for some subscheme $Z$ of $S$ of
dimension at most zero:
\begin{equation}\label{dim Z le0}
\dim Z\le0.
\end{equation}
The exact triples
$$
0\to\calo_{\p3}(e-a)\overset{\cdot S}{\to}\calo_{\p3}(c-a-b+e)\to
\calo_S(c-a-b+e)\to0,
$$
$$
0\to\cali_{\overline{C}_0}(c-a-b+e)\to\calo_{\p3}(c-a-b+e)\to\calo_{\overline{C}_0}(c-a-b+e)\to0
$$
together with (\ref{triple IZS}) extend to a commutative diagram 
\begin{equation}\label{diag with C_0}
\xymatrix{
& 0 \ar[d] & 0 \ar[d] & 0 \ar[d] & \\
0 \ar[r] & \cali_{C_0}(e-a) \ar[d]\ar[r] & \cali_{\overline{C}_0}(c-a-b+e) \ar[d] \ar[r]^{\beta} & \cali_{Z,S}(e-b) \ar[d] \ar[r] & 0 \\
0 \ar[r] & \calo_{\p3}(e-a) \ar[d] \ar[r]^-{\cdot S} &
\op3(c-a-b+e) \ar[d] \ar[r] &
\calo_S(c-a-b+e) \ar[d] \ar[r] & 0 \\
0\ar[r] & \calo_{C_0}(e-a) \ar[d] \ar[r] & \calo_{\overline{C}_0}(c-a-b+e) \ar[d] \ar[r] & \calo_{C'_0}(c-a-b+e) \ar[r] \ar[d] & 0 \\
& 0 & 0 & 0. &} 
\end{equation}
Now the composition of morphisms $\beta\circ\alpha$, where 
$\alpha$ is taken from (\ref{s, alpha}) and $\beta$ is defined
in this diagram,
decomposes as
\begin{equation}\label{epsilon}
\beta\circ\alpha:\ E\overset{\otimes\calo_S}{\twoheadrightarrow}E|_S
\overset{\gamma}{\twoheadrightarrow}\cali_{Z,S}(e-b)
\end{equation}
for some epimorphism $\gamma:\ E|_S
\overset{\gamma}{\twoheadrightarrow}\cali_{Z,S}(e-b)$. 

Note that (\ref{dim Z le0}) implies the equalities
$\cale xt^i_{\op3}(\calo_Z,\op3)=0,\ i=1,2,$ which together with the exact sequence 
$\cale xt^1_{\op3}(\calo_Z,\op3)\to\cale xt^1_{\op3}
(\calo_Z,\calo_S)\to$\\ 
$\cale xt^2_{\op3}(\calo_Z,\op3(b-c))$
obtained from the exact triple 
$0\to\op3(b-c)\overset{\cdot S}{\to}\op3\to\calo_S\to0$
yield
\begin{equation}\label{vanish Ext1}
\cale xt^1_{\calo_S}(\calo_Z,\calo_S)=
\cale xt^1_{\op3}(\calo_Z,\calo_S)=0.
\end{equation}
Applying the functor
$\cale xt^{\cdot}_{\calo_S}(-,\calo_S)$ to the exact triple
$0\to\cali_{Z,S}\to\calo_S\to\calo_Z\to0$ and using (\ref{vanish Ext1}) we obtain
\begin{equation}\label{dual to I}
\cali_{Z,S}^{\vee}\simeq\calo_S.
\end{equation}
Dualizing the morphism $\gamma$ in (\ref{epsilon}) and using
(\ref{dual to I}) and the isomorphism $(E|_S)^{\vee}\simeq (E|_S)(-e)$, after twisting it by $\calo_S(e-b)$ we obtain a morphism
$\mathbf{s}=(\gamma)^{\vee}(e-b):\ \calo_S\to E(-b)|_S$,
i.e. a section $0\ne \mathbf{s}\in H^0(E(-b)|_S)$.  This section is a subbundle morphism on $S\setminus Z$, hence in view of (\ref{dim Z le0}) it extends to the Koszul exact triple
\begin{equation*}\label{Koszul}
0\to\calo_S\overset{\mathbf{s}}{\to}E(-b)|_S\xrightarrow
{\mathbf{s}^{\vee}\otimes\wedge^2\mathbf{s}}
\cali_{Z,S}(e-2b)\to0.
\end{equation*}
This triple shows that 
\begin{equation}\label{Z=(s)0}
\gamma=\mathbf{s}^{\vee}\ \ \  and\ \ \  Z=(\mathbf{s})_0.
\end{equation} 
A standard computation using \eqref{dim Z le0} and \eqref{Z=(s)0} shows that 
\begin{equation}\label{l(Z)}
l(Z)=c_2(E(-b)|_S)=(c-a)(c-b)(c+a-e)>0,
\end{equation}
hence \eqref{dim Z le0} implies that
\begin{equation}\label{dim Z=...=0}
\dim Z=\dim(\mathbf{s})_0=0.
\end{equation}
Besides, the equality 
\begin{equation}\label{again h0=1}
h^0(E(-b)|_S)=1
\end{equation}
follows from (\ref{h2 le1}) and (\ref{h2=h0}) (or, equivalently, from \eqref{tilde X tau}, since by assumption $(S,[E])\in X$). Hence, $H^0(E(-b)|_S)$ is spanned by $\mathbf{s}$. 

From \eqref{dim Z=...=0}--\eqref{again h0=1}, it follows
\begin{theorem}\label{Thm 3.1}
For any point $(S,[E])\in X$, one has $h^0(E(-b)|_S)=1$ and $\dim(s)_0=0$ for any $0\ne s\in H^0(E(-b)|_S)$.
\end{theorem}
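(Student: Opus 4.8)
The plan is to exhibit a distinguished non-zero section $\mathbf{s}\in H^0(E(-b)|_S)$ whose zero scheme is a non-empty zero-dimensional subscheme of $S$, and then to use the one-dimensionality of $H^0(E(-b)|_S)$ to conclude that every non-zero section has that same zero scheme. Both assertions of the theorem then follow at once.

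First I would use the freedom in the choice of the monad \eqref{lambda,mu} built into the definition of $N$ to arrange that the surface $S=\{f_3=0\}$ and the complete intersection curve $C_0=\{f_1=f_3=0\}$ are both smooth. Following \cite[Example 3.3]{Rao}, a non-zero section $s\in H^0(E(c-a-b))$ cuts out the locally complete intersection double structure $\overline{C}_0$ on $C_0$ attached to the quotient $N^\vee_{C_0/\p3}\twoheadrightarrow\calo_{C_0}(a+b-e)$, giving the triple \eqref{s, alpha}. Since $c-a-e>0$, this quotient is not the summand $\calo_{C_0}(b-c)$ of the conormal sheaf, so $\overline{C}_0$ is not contained in $S$; consequently the scheme-theoretic intersection $C_0'=\overline{C}_0\cap S$ differs from $C_0$ only along a subscheme $Z$ of dimension at most zero, the bound $\dim Z\le 0$ being forced by the smoothness, hence irreducibility, of $C_0$. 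Chasing the commutative diagram \eqref{diag with C_0}, the composite $\beta\circ\alpha$ of $\alpha$ from \eqref{s, alpha} with the horizontal arrow $\beta$ factors through the restriction $E\twoheadrightarrow E|_S$ via an epimorphism $\gamma\colon E|_S\twoheadrightarrow\cali_{Z,S}(e-b)$, as in \eqref{epsilon}.

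Next I would dualize $\gamma$. The condition $\dim Z\le 0$ yields the $\cale xt$-vanishing \eqref{vanish Ext1}, hence $\cali_{Z,S}^\vee\simeq\calo_S$ by \eqref{dual to I}; combining this with $(E|_S)^\vee\simeq(E|_S)(-e)$ and twisting by $\calo_S(e-b)$ converts $\gamma^\vee$ into a non-zero section $\mathbf{s}\colon\calo_S\to E(-b)|_S$. On $S\setminus Z$ the map $\mathbf{s}$ is a subbundle inclusion, so it extends to a Koszul exact triple $0\to\calo_S\xrightarrow{\mathbf{s}}E(-b)|_S\to\cali_{(\mathbf{s})_0,S}(e-2b)\to0$, which identifies $(\mathbf{s})_0$ with $Z$. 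A Chern class computation then gives $l(Z)=c_2(E(-b)|_S)=(c-a)(c-b)(c+a-e)>0$, so $Z$ is a non-empty zero-dimensional scheme and $\dim(\mathbf{s})_0=0$. Since $h^0(E(-b)|_S)=1$ — which holds by \eqref{h2 le1}--\eqref{h2=h0}, or directly from the defining property \eqref{tilde X tau} of $X$ — the line $H^0(E(-b)|_S)$ is spanned by $\mathbf{s}$, so every $0\ne s\in H^0(E(-b)|_S)$ has $(s)_0=(\mathbf{s})_0$ of dimension zero, as claimed.

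The main obstacle is the construction of $\mathbf{s}$ together with the precise identification $(\mathbf{s})_0=Z$. It is immediate to restrict the section cutting out $\overline{C}_0$ to $S$, but the naive restriction only produces the epimorphism $\gamma$ onto $\cali_{Z,S}(e-b)$, not a section of $E(-b)|_S$ with the right twist; the correction is the dualization step, which is legitimate precisely because $Z$ is zero-dimensional and the relevant $\cale xt^1$ vanishes, and one must then verify that dualizing does not lose the zero locus, i.e.\ that $(\mathbf{s})_0=Z$ on the nose. The remaining point — that $Z$ is non-empty, so that $(\mathbf{s})_0$ is a genuine zero-dimensional scheme rather than the empty set — is purely numerical and follows from the positivity of $c_2(E(-b)|_S)$.
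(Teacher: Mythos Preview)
Your proposal is correct and follows the paper's own argument essentially step for step: the construction of $\mathbf{s}$ by dualizing the epimorphism $\gamma$ obtained from the diagram \eqref{diag with C_0}, the identification $(\mathbf{s})_0=Z$ via the Koszul triple, the positivity of $c_2(E(-b)|_S)$ to force $\dim Z=0$, and the appeal to \eqref{h2 le1}--\eqref{h2=h0} (equivalently \eqref{tilde X tau}) for $h^0=1$ are exactly the paper's ingredients, in the same order.
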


Consider the incidence variety $\Gamma$ introduced in \eqref{Gamma}. Using the embedding $\mathbf{X}\hookrightarrow P\times Y$ 
(cf. Theorem \ref{Thm 2}), set
$$
\mathbf{\Gamma}=(\Gamma\times Y)\times_{\mathbf{P} \times Y}\mathbf{X}
$$
and let $\rho:\mathbf{\Gamma}\to \mathbf{X}$ be the natural projection. Set
\begin{equation*}\label{E tau}
\mathbf{L}=L_{\mathbf{X}}, 
\end{equation*}
where the invertible $\calo_{\mathcal{Y}}$-sheaf $L$ was defined in (\ref{L,L'}).
Consider the family of generalized null correlation bundles $\mathbf{E}$ defined in
Theorem \ref{Thm 2}(iii). The first equality in (\ref{L,L'}) and the base change imply 
$$
\mathbf{L}=R^2\rho_*(\mathbf{E}
|_{\mathbf{\Gamma}}(c-e-4))
$$
(here and below we use the convention \eqref{Amn} on notation), so that the relative Serre duality for the projection
$\rho$ yields
\begin{equation}\label{n1}
\mathbf{L}^\vee\simeq
\rho_*(\mathbf{E}|_{\mathbf{\Gamma}}(-b)). 
\end{equation}
Respectively, for an arbitrary point $\mathbf{x}\in\mathbf{X}$ and a surface $S_{\mathbf{x}}:=\mathbf{\Gamma}\times_{\mathbf{X}}
\{\mathbf{x}\}\subset\mathbb{P}^3$, we have
\begin{equation}\label{n2}
\mathbf{L}^\vee\otimes_{\calo_{\mathbf{X}}}\mathbf{k}
(\mathbf{x})=H^0(\mathbf{E}(-b)|_{S_{\mathbf{x}}}).
\end{equation}
Following our convention on notation, denote 
$\mathbf{L}_{\mathbf{\Gamma}}=(\mathrm{id}_{\p3}\times\rho)^*
\mathbf{L}$, $\mathbf{E}_{\mathbf{\Gamma}}=
(\mathrm{id}_{\p3}\times\rho)^*
\mathbf{E}$. The isomorphism (\ref{n1}) induces a section $s_{\mathbf{\Gamma}}\in H^0(\mathbf{E}_{\mathbf{\Gamma}}(-b)
\otimes\mathbf{L}_{\mathbf{\Gamma}})$ defined as
\begin{equation}\label{n4}
s_{\mathbf{\Gamma}}:\ \calo_{\mathbf{\Gamma}}=
\rho^*\rho_*(\mathbf{E}_{\mathbf{\Gamma}}
(-b))\otimes\mathbf{L}_{\mathbf{\Gamma}}\overset{ev}{\to}
\mathbf{E}_{\mathbf{\Gamma}}(-b)\otimes
\mathbf{L}_{\mathbf{\Gamma}}.
\end{equation}
Let
$$
\mathcal{Z}=(s_{\mathbf{\Gamma}})_0
$$
be the zero scheme of this section. By the base change for any
$\mathbf{x}\in\mathbf{X}$ the scheme 
\begin{equation}\label{n3}
Z_{\mathbf{x}}=\mathcal{Z}\cap S_{\mathbf{x}}
\end{equation} 
is the zero set of the section $s_{\mathbf{\Gamma}}|_{S_{\mathbf{x}}}\in H^0(\mathbf{E}(-b)|_{S_{\mathbf{x}}})$,
hence from Theorem \ref{Thm 3.1} and the definition of $\mathbf{X}$ we have
$2=\codim_{S_{\mathbf{x}}}Z_{\mathbf{x}}=
\codim_{\mathbf{\Gamma}}\mathcal{Z}$,
so that
\begin{equation}\label{codim}
\codim_{\mathbb{P}^3\times\mathbf{X}}\mathcal{Z}=
\codim_{\mathbb{P}^3\times\{\mathbf{x}\}}Z_{\mathbf{x}}=3.
\end{equation}
Use (\ref{n4}) and the relation 
\begin{equation}\label{Edual}
\mathbf{E}^\vee\simeq\mathbf{E}(-e),
\end{equation}
and consider the composition 
$\boldsymbol{\varepsilon}:\ \mathbf{E}\overset{\otimes\calo_{\mathbf{\Gamma}}}
{\twoheadrightarrow}\mathbf{E}|_{\mathbf{\Gamma}}
\overset{s_{\mathbf{\Gamma}}^\vee}{\twoheadrightarrow}
\mathcal{I}_{\mathcal{Z},\mathbf{\Gamma}}(e-b)\otimes
\mathbf{L}_{\mathbf{\Gamma}}.$
Setting 
\begin{equation}\label{def of bfF}
\mathbf{F}:=\ker\boldsymbol{\varepsilon},
\end{equation}
we obtain an exact triple
\begin{equation}\label{triple E,F}
0\to\mathbf{F}\to\mathbf{E}
\overset{\boldsymbol{\varepsilon}}{\to}
\mathcal{I}_{\mathcal{Z},\mathbf{\Gamma}}(e-b)\otimes
\mathbf{L}_{\mathbf{\Gamma}}\to0.
\end{equation}
\begin{remark}\label{reduction step}
(i) Take any point $\mathbf{x}\in\mathbf{X}$ and restrict 
the last triple onto $\mathbb{P}^3\times\{\mathbf{x}\}$. We will obtain the triple 
\begin{equation}\label{F from E}
0\to F\to E\overset{\varepsilon}{\to}\cali_{Z,S}(e-b)\to0, 
\end{equation}
where $E=\mathbf{E}|_{\mathbb{P}^3\times\{\mathbf{x}\}}$ is a generalized null correlation bundle, $S=S_{\mathbf{x}}$ is a smooth surface from the linear series $\mathbf{P}$ defined by the point $\mathbf{x}$ (namely, $(S,[E])=\Phi(\mathbf{x})$), $Z=Z_{\mathbf{x}}$, $\cali_{Z,S}$ is the ideal sheaf of $Z$ in $S$, and $F=\ker\varepsilon$. This triple is an analogue of the so-called reduction step in the sense of Hartshorne \cite[Prop. 9.1]{H-r}, hence $F$, and therefore also $\mathbf{F}$, is a reflexive sheaf.\\
(ii) In \eqref{F from E} $Z$ is the zero-set of the section $s=\varepsilon^{\vee}$ of the bundle $E(-b)|_S$. Therefore a standard computation using \eqref{l(Z)} and the relations $c_1(E)=e,\ c_2(E)=c^2-a^2-b^2- e(a-b-c)$, $\deg S=c-b$ shows that 
\begin{equation}\label{ci(F) 0}
\begin{split}
& c_1(F)=e+b-c,\\
& c_2(F)=c^2-a^2-bc-e(c-a-b),\\
& c_3(F)=l(Z)=c_2(E(-b)|_S)=(c-a)(c-b)(c+a-e).
\end{split}
\end{equation}
Since the sheaf $F$ is determined uniquely up to an isomorphism by the pair $x=([E],S)\in X$ as the kernel of
an epimorphism $\varepsilon$ in \eqref{F from E}, we will
also use the following  notation for $F$:
\begin{equation}\label{F(x)}
F=F(x)=F(E,S).
\end{equation}
(iii) From \eqref{triple E,F} it follows that the sheaf $\mathbf{F}$ is determined by the sheaf $\mathbf{E}$ uniquely
up to an isomorphism. Hence, since $\mathbf{E}$ inherits a $GL(N_m)$-linearization as a quotient sheaf over (an open subset of) the Quot-scheme, the sheaf $\mathbf{F}$ also inherits a $GL(N_m)$-linearization.
\end{remark}
Since by construction 
\begin{equation}\label{F for w}
F=\mathbf{F}|_{\mathbb{P}^3\times\{\mathbf{x}\}},\ \ \ 
\mathbf{x}\in\mathbf{X},
\end{equation} 
and $\det\mathbf{E}\simeq\calo_{\p3\times \mathbf{X}}(e)$, it follows from (\ref{ci(F) 0}) that
\begin{equation}\label{det F}
\det\mathbf{F}\simeq\calo_{\p3\times\mathbf{X}}
(e+b-c).
\end{equation}
As $\mathbf{F}$ is a rank 2 reflexive sheaf on $\p3\times\mathbf{X}$ by Remark \ref{reduction step}(i),  (\ref{det F}) implies
\begin{equation}\label{F dual}
\mathbf{F}^{\vee}=\mathbf{F}(c-e-b).
\end{equation}

Next, from (\ref{exact0}) follows the relation $N_{\mathbf{\Gamma}/\p3\times\mathbf{X}}
\simeq\calo_{\mathbf{\Gamma}}(c-b,1)$, and (\ref{codim}) implies
$$
\mathcal{E}xt^1(\mathcal{I}_{\mathcal{Z},\mathbf{\Gamma}}
(e-b)\otimes\mathbf{L}_{\mathbf{\Gamma}},\calo_{\p3\times\mathbf{X}})=\mathcal{E}xt^1(\mathbf{L}_{\mathbf{\Gamma}}(e-b),\calo_{\p3\times\mathbf{X}})=
\mathbf{L}_{\mathbf{\Gamma}}^{\vee}(c-e,1).
$$
Thus, dualizing the triple (\ref{triple E,F}) and using 
(\ref{Edual}) and (\ref{F dual}) we obtain an exact triple
\begin{equation}\label{E from F}
0\to\mathbf{E}(b-c)\to\mathbf{F}\overset{\boldsymbol{\psi}}{\longrightarrow}\mathbf{L}_{\mathbf{\Gamma}}^{\vee}(b,1)\to0.
\end{equation}
Note that the restriction of (\ref{E from F}) onto $\p3\times\{\mathbf{x}\}$
for any $\mathbf{x}\in\mathbf{X}$ yields an exact triple
\begin{equation}\label{E from F restricted}
0\to E(b-c)\to F\overset{\psi}{\to}
\mathcal{O}_S(b)\to0, \ \ \ E=\mathbf{E}|_{\p3\times\{\mathbf{x}\}}, \ \ \   F=\mathbf{F}|_{\p3\times\{\mathbf{x}\}}.
\end{equation}

\vspace{5mm}

\section{Plan of the proof of the main result}
\label{section 4.0}

\vspace{2mm}

In this section we outline a general plan of the proof of the main result of the paper - Theorem \ref{main Thm}. It consists of four steps.

\vspace{2mm}
\textbf{Step 1.} This step is described in detail in Section \ref{section 4}. We consider the set
\begin{equation}\label{W as a set}
\begin{split}
& W=\{([E],S,C)\ |\ ([E],S)\in X,\ C=(s)_0\ is\ a\ smooth\ curve,\\
& where\ 0\ne s\in H^0(F(E,S)(c-a-b))\}.
\end{split}
\end{equation}
(Remind that here we use the notation $F(E,S)$ introduced in \eqref{F(x)} for a reflexive sheaf $F$ determined by the point $x=([E],S)\in X$ - see Remark \ref{reduction step}(i-ii).)
It is proved in Corollary \ref{cor 5.3} that this set 
underlies a variety $W$ with a projection $\pi:W\to X$ which is an open subfibration of a locally trivial $\mathbb{P}^{m}$-fibration over $X$, where $m$ is 
given by \eqref{formula for m}. 
We thus have a diagram of cartesian squares extending
the right diagram \eqref{tilde Y,tilde X}:
\begin{equation}\label{diag for X,W}
\xymatrix{ 
**[l]\mathbf{E}_{\mathbf{W}}, \mathbf{F}_{\mathbf{W}}\ \   \mathbf{W}\ar[d]_{\boldsymbol{\pi}}\ar[r]^{\tilde{\Phi}} & **[r]W \ar[d]^{\pi} \ni ([E,]S,C)    \\
**[l]\mathbf{E}, \mathbf{F}\ \ \
\mathbf{X} \ar[d]_{\boldsymbol{\theta}}\ar[r]^{\Phi} & X \ar[d]^{\theta} \\
Y\ar[r]^-{\varphi} & N,}
\end{equation}
in which horizontal maps are principal $G$-bundles.
Here $\mathbf{E}$ and $\mathbf{F}$ are the families of $\op3$-sheaves with base $\mathbf{X}$ introduced in \eqref{mathbf E} and \eqref{def of bfF}, and 
$\mathbf{E}_{\mathbf{W}}$ and $\mathbf{F}_{\mathbf{W}}$
are their lifts onto $\p3\times\mathbf{W}$.

\vspace{2mm}
\textbf{Step 2.} At this step, which is performed in detail in Section \ref{section 5}, we construct a new family $\underline{\mathbf{F}}$ of reflexive sheaves 
on $\p3$, of the type described in Remark \ref{reduction step}, and with a rational base $\mathbf{T}$. These data $\mathbf{T}$ and $\underline{\mathbf{F}}$ are explicitely described in
\eqref{bfT as a set} and \eqref{univ FT} below. We then
restrict our consideration to a certain dense open
subset $T$ of $\mathbf{T}$ which will be essential
for our subsequent arguments.

We start with the linear series $\mathbf{P}=|\calo_{\p3}(c-b)|$ introduced in \eqref{bfP} and consider its dense open subset $\mathrm{P}$
of smooth surfaces - see \eqref{rmP}. Set
\begin{equation}\label{bfR=}
\mathrm{R}:=\{(S,C)\in\mathrm{P}\times\mathrm{Hilb}_{\p3}\ |\ C\in|\calo_S(c-a)|\ is\ a\ smooth\ curve\},
\end{equation}
together with a natural projection
$\mathrm{r}:\mathrm{R}\to\mathrm{P},\ (S,C)\mapsto S$.
\begin{remark}\label{fibre of r}
Since any $S\in\mathrm{P}$ is a smooth (hence irreducible) surface, it follows from the cohomology of the exact triple
$0\to\op3(b-a)\to\op3(c-a)\to\calo_S(c-a)\to0$
that\\ 
(i) the fibre $\mathrm{r}^{-1}(S)$ is an open dense subset of the linear series $|\calo_S(c-a)|$ consisting of smooth curves
and
$$
\dim \mathrm{r}^{-1}(S)=\binom{c-a+3}{3}-
\binom{b-a+3}{3}-1,
$$
and all the curves of this linear series are complete intersections of the form 
\begin{equation}\label{complete int}
C=S\cap S',\ \ \ S'\in|\op3(c-a)|;
\end{equation}
(ii) the projection $\mathrm{r}:\mathrm{R}\to\mathrm{P}$
is an open subfibration of a locally trivial projective fibration with fibre $|\calo_S(c-a)|$ over a point $S\in \mathrm{P}$; hence, since $\mathrm{P}$ is rational, $\mathrm{R}$ is rational as well; 
moreover,
\begin{equation}\label{dim R}
\dim\mathrm{R}=\dim\mathrm{P}+\dim \mathrm{r}^{-1}(S)=\binom{c-b+3}{3}+
\binom{c-a+3}{3}-\binom{b-a+3}{3}-2.
\end{equation}
\end{remark}

Take an arbiitrary point $(S,C)\in\mathrm{R}$ and consider the group
\begin{equation}\label{Ext1 group}
\mathrm{Ext}^i(x):=\mathrm{Ext}^i(\cali_C(c-2a-b+e),
\calo_{\p3}).
\end{equation}
In Section \ref{section 5} we prove that the dimension of this group does not depend on the point $x$ and is given by
the formula \eqref{dim Ext1}. This implies that the set
\begin{equation}\label{bfT as a set}
\mathbf{T}=\{t=(x,\xi)\ |\ x=(S,C)\in\mathrm{R},\ \xi\in P(\mathrm{Ext}^1(x))\}
\end{equation}
is the set of closed points of the variety (denoted below by the same letter $\mathbf{T}$) of dimension given by the formula \eqref{dim T}, and the projection
\begin{equation}\label{set-th def of mu}
\boldsymbol{\mu}:\ \mathbf{T}\to \mathrm{R},\ \ \ (x,\xi)\mapsto x
\end{equation}
is a locally trivial projective fibration. In particular,
since $\mathrm{R}$ is rational, $\mathbf{T}$ is also rational.

Furthermore, in Theorem \ref{Thm 7} we state that on
$\p3\times \mathbf{T}$ there is a sheaf $\underline{\mathbf{F}}$ defined as the universal extension sheaf
\begin{equation}\label{univ FT}
0\to\calo_{\p3}(a+b-c)\boxtimes\calo_{\mathbf{T}}(1)\to
\underline{\mathbf{F}}\to\cali_{\boldsymbol{\Sigma},\p3\times
\mathbf{T}}(e-a),\to0,\ \ \ \boldsymbol{\Sigma}=\Sigma
\times_{\mathrm{R}}\mathbf{T},
\end{equation}
where $\Sigma$ is the incidence subvariety of $\p3\times\mathrm{R}$ defined as
\begin{equation}\label{def of Sigma}
\Sigma:=\{(x,S,C)\in\p3\times\mathrm{R}\ |\ x\in C\}.
\end{equation}
Here $\underline{\mathbf{F}}$ may be considered as a family of reflexive $\op3$-sheaves with base $\mathbf{T}$ and with Chern classes given by (\ref{ci(F) 0}) - see Remark \ref{Serre constr}.

In the last part of Section \ref{section 5} we prove one 
technical result about reflexive sheaves of the family 
$\underline{\mathbf{F}}$ with base $\mathbf{T}$.
It shows that, if for $t=(S,C,\xi)$, a sheaf $F_t$ of the 
family $\underline{\mathbf{F}}$ has an epimorphism onto an 
invertible $\calo_S$-sheaf $\calo_S(b)$, then the kernel of 
this morphism is a generalized null correlation bundle twisted 
by $\calo_{\p3}(b-c)$, just as in the exact triple \eqref{E 
from F restricted} in which we put $F=F_t$. It is proved in 
Theorem \ref{Thm 10}. A principal technical point used in the 
proof is the following specific property of any generalized 
null correlation bundle $E$: it has the cohomology 
$H^0_*(\calo_{\p3})$-module $H^1_*(E)$ with one generator. 
This Theorem is crucial for further constructions.

\vspace{2mm}
\textbf{Step 3.} At this step which is worked out in detail in 
Section \ref{section 6}, we use the above family of reflexive 
sheaves $\underline{\mathbf{F}}$ to construct a family 
$\underline{\mathbf{E}}$ of generalized null correlation 
bundles with rational base $V$. For this, we first construct
a locally trivial projective bundle $\boldsymbol{\lambda}:\ 
\mathbf{U}\to\mathbf{T}$ with fibre 
$\boldsymbol{\lambda}^{-1}(t)$ over an arbitrary point 
$t=(S,C,\xi)\in T$ equal to the projectivized vector space 
$\mathrm{Hom}(F_t,\calo_S(b))$. The local triviality of this 
projective fibration is a consequence of Theorem \eqref{Thm 
11} which, in particular, states that the dimension of the 
above space $\mathrm{Hom}(F_t,\calo_S(b))$ does not depend on 
$t$. As a corollary of Theorems \ref{Thm 10} and \ref{Thm 11}
we then find dense open subsets $T$ of $\mathbf{T}$ and 
$V$ of $\mathbf{U}$ such that (i) $\lambda=\boldsymbol{\lambda}
|_V: V\to T$ is a surjection and, for $(t,\mathbf{k}\psi)\in V$, and (ii) the morphism $\psi:F_t\to\calo_S(b)$ is surjective. More precisely, $V$ is set-theoretically defined as the set of data
\begin{equation}
\begin{split}
& V=\{(S,C,\xi,\mathbf{k}\psi)\ |\ (S,C,\xi,\mathbf{k}\psi) \ satisfies\ the\ above\ conditions\ (i)-(ii)\\
& and\ the\ open\ condition\ [\ker(\psi)(c-b)]\in N \}
\end{split}
\end{equation} 
(the precice definition of $V$ is given in \eqref{underline E}).
As a consequence, we obtain a family $\underline{\mathbf{E}}$ of generalized null correlation bundles related to the family
$\underline{\mathbf{F}}$ via the exact triple 
\begin{equation}\label{tilde E from tilde F}
0\to\underline{\mathbf{E}}(b-c)\to\underline{\mathbf{F}}_V
\overset{\Psi}{\to}\calo_{\Gamma_{V}}(b)
\otimes\calo_{\p3}\boxtimes
\calo_{V}(1)\to0
\end{equation}
(see Remark \ref{global triple E,F}). Here $\Gamma_V\subset\p3\times V$ is the graph of the family of surfaces $S$ and $\calo_V(1)$ is the restricted onto $V$ Grothendieck sheaf of the above mentioned projective fibration - see \eqref{notation for B}. This triple is the relativized over $V$ version of the exact triple \eqref{E from F restricted}.
As a result of the constructions of Steps 2 and 3, we obtain 
the following diagram of morphisms:
\begin{equation}\label{diag for T,B}
\xymatrix{ 
\underline{\mathbf{F}}_V,\ \underline{\mathbf{E}} & V\ \ar[d]_-{\lambda}\ar@{^{(}->}[r]^{\mathrm{open}} & \mathbf{U} \ar[d]_-{\boldsymbol{\lambda}} & \\
& T\ \ar[d]_{\mu}\ar@{^{(}->}[r]^{\mathrm{open}} & 
\mathbf{T}  \ar[dl]_-{\boldsymbol{\mu}}  &  \boldsymbol{\Sigma}\ar[l]\ar[dl]\\
& \mathrm{R}\ar[d]_-{\mathrm{r}} & \Sigma\ar[l] & \\ 
& {}\ {\mathrm{P}}, & & } 
\end{equation}
together with the family of $\op3$-sheaves $\underline{\mathbf{F}}$ with base $\mathbf{T}$ and the induced families of $\op3$-sheaves $\underline{\mathbf{F}}_V,\ \underline{\mathbf{E}}$ with base $V$.
Remind that, in this diagram, varieties $R,\ T,\ \Sigma$ and $\boldsymbol{\Sigma}$ were defined in \eqref{bfR=}, \eqref{bfT as a set}, \eqref{def of Sigma} and \eqref{univ FT}, respectively.  
 
\vspace{2mm}
\textbf{Step 4.}
At this final step performed in Section \ref{section 7} we show that there is an isomorphism
$f:\ W\overset{\simeq}{\to}V.$ Set-theoretically the map
$f:W\to V$ on closed points is given as follows.

For any $([E],S,C)\in W$ consider an exact triple \eqref{F from E}. Dualizing it we obtain a) an exact triple \eqref{E from F restricted} with $F=F(E,S)$ and an epimorphism  $\psi:F\to\calo_S(b)$, and b) an extension class $\xi\in
P(\Ext^1(\cali_C(c-2a-b+e),\op3))$ given by an exact triple \eqref{s for F} with $F=F(E,S)$. Then we define $f$ as:
\begin{equation}\label{set-theoretic def of f}
f([E],S,C):=(S,C,\xi,\mathbf{k}\psi).
\end{equation}
From the description of $V$ given in Step 3 it follows that 
the point $f([E],S,C)$ belongs to $V$. 

Respectively, the inverse $h=f^{-1}: V\to W$ of $f$ is set-theoretically described as:
\begin{equation}\label{set-theoretic def of h}
h(S,C,\xi,\mathbf{k}\psi):=([E],S,C),\ \ \ where\ \ \  E=
\ker(\psi)(c-b).
\end{equation}

In Theorem \ref{main Thm}(i) we prove that the map $f$, respectively, its inverse $h$ is the 
underlying map of an isomorphism between $W$ and $V$. The idea
is to relate the diagrams \eqref{diag for X,W} and \eqref{diag for T,B}. We first construct a $G$-invariant morphism
$\mathbf{f}_V:\ \mathbf{W}\to V$ which descends to the desired morphism $f:W\to V$ satisfying the relation $\mathbf{f}_V=f\circ\tilde{\Phi}$ since $\tilde{\Phi}:\mathbf{W}\to W$ is categorical quotient.

Next, we construct a principal $PGL(N_m)$-bundle  
$\mathbf{\Phi}:\ \mathbf{V}\to V$ and a morphism
$\mathbf{f}:\mathbf{W}\to\mathbf{V}$ making the diagram
\begin{equation}
\xymatrix{\mathbf{W} \ar[d]_{\tilde{\Phi}}\ar[r]^{\mathbf{f}}
& \mathbf{V}\ar[d]^{\boldsymbol{\Phi}} \\
W\ar[r]^-{f} & V}
\end{equation}
commutative (see \eqref{var bf V}-\eqref{bf Phi} and \eqref{mor bf f}-\eqref{diag f,fB} for details).

Last, we construct the morphisms $\mathbf{h}:\mathbf{V}\to\mathbf{W}$ and $h:V\to W$
making the diagram
\begin{equation*}
\xymatrix{\mathbf{W}\ar[d]_{\Phi} & \mathbf{V} \ar[l]_-{\mathbf{h}} \ar[d]^{\mathbf{\Phi}} \\
W & V\ar[l]_-{h}}
\end{equation*}
commutative and show that $\mathbf{h}$ and $h$ are inverse, 
respectively to $\mathbf{f}$ and $f$ (see \eqref{morphism 
h}-\eqref{diag bf h, h} for details).

Technical aspects of the proof are based on the universal 
properties of Quot-schemes, Hilbert schemes and projectivized 
spaces of extensions involved in the constructions of the 
families $\mathbf{E}_{\mathbf{W}}, \mathbf{F}_{\mathbf{W}}$ in 
diagram \eqref{diag for X,W} and the families 
$\underline{\mathbf{F}}_V,\ \underline{\mathbf{E}}$ in diagram 
\eqref{diag for T,B}.

In Theorem \ref{main Thm}(ii)--(iii) we obtain the main result of the paper, the stable rationality of the space $\overline{N}(e,a,b,c)$ and, respectively, its rationality for
$(e,a)\ne(0,0),\ c>2a+b-e$, and $b>a$, as a quick consequence
of the statement (i) of this Theorem.

\vspace{5mm}

\section{Properties of reflexive sheaves of the family $\mathbf{F}$}
\label{section 4}

\vspace{2mm}

In this section we study more closely reflexive sheaves $F$ of the family $\mathbf{F}$ -- see (\ref{F for w}). 
Note that an arbitrary sheaf $F$ is obtained from a generalized null correlation bundle $[E]\in N$ by the triple (\ref{F from E}).

This consideration leads to the following theorem. 

\begin{theorem}\label{Thm 3}
For any $[E]\in N$ the following statements hold.\\
(i) There exists a surface $S\in\theta^{-1}\bigl([E]\bigr)$ such that the reflexive sheaf $F$ defined by 
the pair $\bigl(S,[E]\bigr)$ as in Remark  \ref{reduction step} satisfies the conditions
\begin{equation}\label{h0(F...)=1}
h^0(F(c-a-b))=\Biggl\{
\begin{array}{ccc}
1, &\mathrm{if}\ (e,a)\ne(0,0), \\
2, &\ \ \ \ \mathrm{if}\ e=a=0,\ b>0, \\
3, &\ \mathrm{if}\ e=a=b=0,
\end{array}
\end{equation}
\begin{equation}\label{h1(F...)=0}
h^1(F(c-a-b))=0.
\end{equation}
(ii) For any $0\ne s\in H^0(F(c-a-b))$ there an exact triple 
\begin{equation}\label{s for F}
0\to\op3\overset{s}{\to}F(c-a-b)\to\cali_C(c-2a-b+e)\to0,
\end{equation}
where $C=(s)_0$ is a complete intersection curve $C=S\cap S'$, where $S'$ is certain surface of degree $c-a$ in $\mathbb{P}^3$. In addition, $C=(s)_0$ is smooth for a general
$s\in H^0(F(c-a-b))$.
\\
(iii) In case $h^0(F(c-a-b))\le2$, the space 
$P(H^0(F(c-a-b)))$ is naturally identified with a linear 
subspace of the linear series $|\calo_S(C)|=|\calo_S(c-a)|$. 
In case  $h^0(F(c-a-b))=3$, the space 
$P(H^0(F(c-a-b))^{\vee})$ is naturally identified with a 
linear subspace of the linear series $|\op3(c-b)|$.
\end{theorem}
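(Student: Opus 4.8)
The plan is to analyze the reduction-step triple \eqref{F from E}, twisted by $\op3(c-a-b)$:
\begin{equation*}
0\to F(c-a-b)\to E(c-a-b)\overset{\varepsilon}{\to}\cali_{Z,S}(c-2a-b+e)\to0,
\end{equation*}
and extract the cohomology of $F(c-a-b)$ from the long exact sequence together with the known data on $E$ and on $Z\subset S$. First I would compute $h^0$ and $h^1$ of $E(c-a-b)$ using the Ein monad \eqref{lambda,mu}: from $0\to\op3(-c+e)\to\calh\to\op3(c)\to0$ one gets, after twisting by $\op3(c-a-b)$ and chasing cohomology of the two short exact sequences obtained by breaking the monad at $\ker\mu$, expressions for $h^i(E(c-a-b))$ in terms of binomial coefficients; the relevant vanishings $h^1(\ker\mu(c-a-b))=h^2(\ker\mu(c-a-b))=0$ and $h^1(\op3(\ast))=h^2(\op3(\ast))=0$ hold in the admissible range, so $h^0(E(c-a-b))$ is given by an explicit polynomial in $a,b,c,e$ and $h^1(E(c-a-b))=0$. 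The section $s\in H^0(E(c-a-b))$ whose zero scheme is $\overline{C}_0$ (cf. \eqref{zero-set}) shows $h^0(E(c-a-b))\ge1$; the precise value, case by case in $(e,a,b)$, is a routine count that I would record.

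Next I would control $H^0$ and $H^1$ of the quotient $\cali_{Z,S}(c-2a-b+e)$. Since $\dim Z\le0$ by \eqref{dim Z le0} and in fact $\dim Z=0$ with $l(Z)=(c-a)(c-b)(c+a-e)$ by \eqref{l(Z)}--\eqref{dim Z=...=0}, the triple $0\to\cali_{Z,S}\to\calo_S\to\calo_Z\to0$ twisted by $\calo_S(c-2a-b+e)$ gives $h^1(\cali_{Z,S}(c-2a-b+e))$ in terms of $h^0,h^1$ of $\calo_S(c-2a-b+e)$ and $l(Z)$; here I would use that $S$ is a smooth surface of degree $c-b$, so the cohomology of $\calo_S(\ast)$ is computed from $0\to\op3(b-c)\to\op3\to\calo_S\to0$. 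The key numerical input is that the section $s$ above maps, under the composition \eqref{epsilon}, to an epimorphism $\gamma=\mathbf{s}^{\vee}$ with $Z=(\mathbf{s})_0$, so that \eqref{F from E} is genuinely the reduction step and $h^0(E(c-a-b))\to h^0(\cali_{Z,S}(c-2a-b+e))$ has the expected behaviour. Combining the three pieces in the long exact sequence of \eqref{F from E} twisted by $\op3(c-a-b)$ then yields \eqref{h0(F...)=1} and \eqref{h1(F...)=0}; the case split in \eqref{h0(F...)=1} matches exactly the correction term $t(e,a,b)$ appearing in \eqref{t(e)}, which is a reassuring consistency check and indeed reflects the jump in $h^0(E(c-a-b))$ when extra symmetries force $f_1,f_2$ (resp.\ all four $f_i$) to lie in smaller spaces.

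For part (ii), given $0\ne s\in H^0(F(c-a-b))$, since $F(c-a-b)$ is a rank $2$ reflexive sheaf with $c_1(F(c-a-b))=2(c-a-b)+c_1(F)=c-2a-b+e$ by \eqref{ci(F) 0}, the section $s$ defines $0\to\op3\overset{s}{\to}F(c-a-b)\to\cali_{C}(c-2a-b+e)\to0$ where $C=(s)_0$ is the zero locus, pure of codimension $2$ provided $s$ does not vanish in codimension $1$; the latter follows because $F$ is $\mu$-stable (being built from the stable $E$ via an elementary transformation, or directly from the computation of its Chern classes and reflexivity), so $H^0(F(c-a-b-1))=0$, forcing $s$ to have a codimension-$\ge2$ zero locus. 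To see $C$ is a complete intersection $S\cap S'$ with $\deg S'=c-a$: restrict the dual triple \eqref{E from F restricted}, namely $0\to E(b-c)\to F\to\calo_S(b)\to0$, twist appropriately and compare with the complete intersection curve $C_0=S\cap S'$ of \eqref{S,S'}--\eqref{C=S cap S'}, using \eqref{OS(C)} which says $\calo_S(C_0)\simeq\calo_S(c-a)$; concretely, composing $s$ with the projection $F(c-a-b)\to\calo_S(b)(c-a-b)=\calo_S(c-a)$ produces a section of $\calo_S(c-a)$ cutting $C$ on $S$, and lifting a generator from $\op3(c-a)$ gives $S'$. Bertini (valid in characteristic $0$) then gives smoothness of $C=(s)_0$ for general $s$, since the relevant linear system on $S$, pulled back from $|\calo_S(c-a)|$, is base-point free away from a set of codimension $\ge2$.

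For part (iii), when $h^0(F(c-a-b))\le2$ the pushforward-to-$S$ map $H^0(F(c-a-b))\to H^0(\calo_S(c-a))$ coming from $F(c-a-b)\onto\calo_S(c-a)$ is injective (its kernel would give a section of $E(b-c)(c-a-b)=E(-a)$, and $h^0(E(-a))=0$ since $E$ is stable with $c_1=e\le0$, except possibly a one-dimensional contribution exactly in the symmetric cases which is precisely why $h^0$ jumps), so $P(H^0(F(c-a-b)))$ embeds linearly in $|\calo_S(c-a)|=|\calo_S(C)|$. When $h^0(F(c-a-b))=3$, i.e.\ $e=a=b=0$, one instead uses the dual side: the three sections, via $0\to\op3(-c)\to F(c)\to\cali_C(c)\to0$ and the identification $F^{\vee}\simeq F(c)$ from \eqref{F dual} (with $e=a=b=0$), produce three pencils of surfaces of degree $c$, but the relevant $3$-dimensional space is naturally $H^0(F(c-a-b))^{\vee}\cong H^1(\cali_{\overline{C}_0}(\ast))$-type data matching the linear span inside $|\op3(c-b)|=|\op3(c)|$; I would make this precise by dualizing the monad, noting that in the fully symmetric case $\calh=4\op3$ up to twist and $H^0(F(c))^{\vee}$ is canonically a $3$-dimensional subquotient of $H^0(\op3(c-b))$. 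The hard part will be pinning down these natural identifications in part (iii) in the two exceptional cases with the correct duality and verifying the linearity of the embedding uniformly — the numerics in (i)–(ii) are a sequence of cohomology-of-monad and ideal-sheaf-on-a-smooth-surface computations that, while lengthy, are routine, whereas keeping track of which space ($H^0(F(c-a-b))$ versus its dual) maps linearly into which linear series requires care with the Serre duality twists $\calo_S\simeq\cali_{Z,S}^{\vee}$ of \eqref{dual to I} and $(E|_S)^{\vee}\simeq E|_S(-e)$.
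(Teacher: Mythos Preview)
Your approach to part (i) via the long exact sequence of the reduction-step triple $0\to F(c-a-b)\to E(c-a-b)\to\cali_{Z,S}(c-2a-b+e)\to0$ has a real gap: to extract $h^0(F(c-a-b))$ and $h^1(F(c-a-b))$ you must know the rank of $H^0(E(c-a-b))\to H^0(\cali_{Z,S}(c-2a-b+e))$, and ``has the expected behaviour'' is not a computation. The target $H^0(\cali_{Z,S}(c-2a-b+e))$ itself depends on how the $0$-dimensional scheme $Z$ sits in $S$, so neither side of that map is easily accessible in general. The paper avoids this entirely by a different and much cleaner route: using the commutative diagram built from \eqref{s, alpha} and \eqref{diag with C_0}, one obtains a \emph{specific} section of $F(c-a-b)$ with zero locus the smooth complete intersection $C_0=S\cap S'$, i.e.\ an exact triple
\[
0\to\op3\to F(c-a-b)\to\cali_{C_0}(c-2a-b+e)\to0.
\]
Because $C_0$ is a complete intersection, its ideal has the Koszul resolution \eqref{resolution IC}, and the case split \eqref{h0(F...)=1} together with $h^1=0$ drops out immediately from the degrees $e-2a$ and $e-a-b$.

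Your argument for part (ii) contains an actual error: the claim that $F$ is $\mu$-stable, hence $H^0(F(c-a-b-1))=0$, is false in general. For instance take $(e,a,b,c)=(0,1,1,3)$: then $c_1(F)=e+b-c=-2$, so $F_{\mathrm{norm}}=F(1)$, but $h^0(F(1))=h^0(F(c-a-b))=1$ by part (i), so $F$ is \emph{not} stable. More generally, whenever $c-a-b=\lceil(c-b-e)/2\rceil$ and $(e,a)\neq(0,0)$ the normalization of $F$ already has a section. The paper instead argues directly from the bound $h^0(F(c-a-b))\le 3$ (established in (i)): if $s$ vanished on a divisor of degree $d\ge1$, factoring it out would produce $0\ne s'\in H^0(F(c-a-b-d))$, and then $\mathbf{k}s'\otimes H^0(\op3(d))\subset H^0(F(c-a-b))$ has dimension $\ge\binom{d+3}{3}\ge4$, a contradiction. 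The complete-intersection assertion and part (iii) are then handled case by case; in particular for $h^0=3$ the paper uses the push-out resolution $0\to\op3(-c)\to 3\op3\to F(c)\to0$ (coming from the triple above and the Koszul resolution of $\cali_{C_0}$), which gives the identification with a linear subspace of $|\op3(c)|$ directly---this is exactly the ingredient your sketch is missing in that case.
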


\begin{proof}
(i) Consider the generalized null correlation bundle $[E]\in N$ and the corresponding monad \eqref{ein monad} with the cohomology sheaf $E$. From the description
\eqref{lambda,mu}-\eqref{C=S cap S'} of this monad it follows that there is a smooth complete intersection curve $C_0$ defined in \eqref{C=S cap S'} having the properties \eqref{OS(C)}-\eqref{quotient of N}. Besides, there is a well-defined double scheme structure $\overline{C}_0$ on $C_0$ 
satisfying the exact triple \eqref{s, alpha}, and another nonreduced scheme structure $C'_0$ on $C_0$ together with a 
zero-dimensional subscheme $Z$ of $C_0$, and these schemes
fit in the commutative diagram \eqref{diag with C_0}.  
By (\ref{s, alpha}), the composition $\beta\circ\alpha\circ s$ is zero, where $\beta$ is defined in \eqref{diag with C_0}. 
Hence the triple (\ref{s, alpha}) and the upper horizontal triple of the diagram \eqref{diag with C_0} extend to a commutative diagram
$$ 
\xymatrix{
& 0 \ar[d] & 0 \ar[d] & & \\
 & \op3(a+b-c) \ar[d]\ar@{=}[r] & \op3(a+b-c) \ar[d]^{s}  &  & \\
0 \ar[r] & F \ar[d] \ar[r] & E \ar[d]^{\alpha} 
\ar[r]^-{\beta\circ\alpha} &
\cali_{Z,S}(e-b) \ar@{=}[d] \ar[r] & 0 \\
0\ar[r] & \cali_{C_0}(e-a) \ar[d] \ar[r] & \cali_{\overline{C_0}}(c-a-b+e) \ar[d] \ar[r]^-{\beta} & \cali_{Z,S}(e-b) \ar[r]  & 0 \\
& 0 & 0. &  &} 
$$
The leftmost vertical triple of this diagram twisted by $\op3(c-b-a)$ coincides with (\ref{s for F}):
\begin{equation}\label{s for F with C0}
0\to \calo_{\p3}\to F(c-a-b)\to\cali_{C_0}(c-2a-b+e)
\to0.
\end{equation}
Since $C_0$ is a complete intersection (\ref{C=S cap S'}), it follows that the sheaf $\cali_{C_0}(c-2a-b+e)$ has the following locally free $\op3$-resolution:
\begin{equation}\label{resolution IC}
0\to\op3(e-c-a)\to\op3(e-2a)\oplus\op3(e-a-b)\to
\cali_{C_0}(c-2a-b+e)\to0. 
\end{equation}
Passing to sections in the triple (\ref{resolution IC}) and
(\ref{s for F with C0}) we obtain (\ref{h0(F...)=1}) and (\ref{h1(F...)=0}).

(ii) Note that, since by \eqref{h0(F...)=1} $h^0(F(c-a-b))\le3$,
it clearly follows that the zero-scheme $C=(s)_0$ of any non-zero section $s\in H^0(F(c-a-b))$ has dimension 1. (Indeed,
a standard argument in case $\dim C=1$ shows that there exists a positive integer $d$ and nonzero section $s'\in H^0(F(c-a-b-d))$ with $\dim(s')_0=1$, so that $\mathbf{k}s'\otimes H^0(\op3(d))$ is a subspace of dimension $\ge4$ of $H^0(F(c-a-b))$ which is a contradiction.) We thus have to treat 3 cases corresponding to the different values
of $h^0(F(c-a-b))$.

(ii.1) $h^0(F(c-a-b))=1$. Since by \eqref{s for F with C0} $C_0=
(s)_0$ for some $0\ne s\in H^0(F(c-a-b))$, it follows that, in
\eqref{s for F}, $C=C_0$ which is a complete intersection of
desired form \eqref{C=S cap S'}.

(ii.2) $h^0(F(c-a-b))=2$. In this case $e=a=0,\ b>0$, and for any 
$0\ne t\in H^0(F(c-b))$ the triple \eqref{s for F} becomes:
\begin{equation}\label{Ct}
0\to\op3\overset{t}{\to}F(c-b)\to\cali_{C_t}(c-b)\to0, 
\ \ \ \ \ C_t=(t)_0.
\end{equation} 
It follows that $h^0(\cali_{C_t}(c-b))=1$, i. e.
here exists a unique surface $S_t\in|\op3(c-b)|$ containing 
${C_t}$. Now the cokernel $Q$ of the evaluation morphism
$0\to H^0(F(c-b))\otimes\op3\xrightarrow{\mathrm{ev}}F(c-b)$
is by construction a sheaf supported on the divisor $S_t$ for
any $0\ne t\in H^0(F(c-b))$. Thus from the above uniqueness
we have $S_t=S$, where $S\in|\op3(c-b)|$ is a surface containing
the curve $C_0$.

Besides, passing to cohomology in the triples
\eqref{s for F with C0} and \eqref{resolution IC} twisted by 
$\op3(b)$ we obtain for $e=a=0$ that $h^0(F(c))$. This together with the triple \eqref{Ct} twisted by $\op3(b)$ yields $h^0(\cali_{C_t}(c))=2$. The last equality together with the
exact triple 
\begin{equation}\label{cali Ct}
0\to\op3(b)\xrightarrow{\cdot S}\cali_{C_t}(c)\to\calo_{S}
(-C_t)(c)\to0, 
\end{equation}
where $S$ is any surface of the 
implies $h^0(\calo_{S}(-C_t)(c))=1$. Since the sheaf $\calo_{S}(-C_t)(c)$ has degree 0 with respect to
$\calo_{S}(1)$, it follows from the last equality that 
$\calo_{S}(C_t)=\calo_{S}(c)$. Since $S\in|\op3(c-b)|$, it follows that $C_t$ is a complete intersection of the desired 
form \eqref{C=S cap S'}.

(ii.3) $h^0(F(c-a-b))=3$. In this case $e=a=b=0$ and, arguing as above in case (ii.2), we obtain for any $0\ne t\in H^0(F(c))$ that $h^0(\cali_{C_t}(c))=2$. Besides, for any surface $S\in|\op3(c-b)|$ passing through $C_t$, there is an exact triple \eqref{cali Ct} with $b=0$. This together with the last equality implies that $h^0(\calo_{S}(-C_t)(c))=1$, and as above we obtain that $C_t$ is a complete intersection curve of the form \eqref{C=S cap S'}.

Last, note that $C=(s)_0$ is smooth for a general
$s\in H^0(F(c-a-b))$, since $C_0$ is smooth. 

(iii) In case $h^0(F(c-a-b))\le2$, the assertion directly 
follows from (ii.1-2). Consider the case  $h^0(F(c-a-b))=3$. Note that, in this case, $a=b=e=0$. The exact triples 
$0\to\op3\to F(c)\to\cali_{C_0}(c)\to0$ and $0\to\op3(-c)\to2\op3\to\cali_{C_0}(c)\to0$ by push-out yield a resolution for $F(c)$ of the form $0\to\op3(-c)\to3\op3\to F(c)\to0$. This resolution shows that, for any 2-dimensional subspace $V$ of $H^0(F(c))$ the cokernel of the evaluation
morphism $0\to V\otimes\op3\xrightarrow{\mathrm{ev}}F(c)$ is
isomorphic to the sheaf $\calo_{S_t}$ for some surface $S_t\in|\op3(c)|$. These surfaces $S_t$ constitute a 2-dimensional linear subseries parametrized by $P(H^0(F(c-a-b))^{\vee})$. 
\end{proof}

Let $p:\p3 \times \mathbf{X}\to\mathbf{X}$ be the projection, and set
\begin{equation}\label{defn W}
\boldsymbol{\mathcal{W}}:=\mathbb{P}((p_*\mathbf{F}(c-a-b))^{\vee})
\xrightarrow{\boldsymbol{\pi}}\mathbf{X}.
\end{equation}
Note that, by (\ref{h0(F...)=1}), (\ref{h1(F...)=0}) and the
base change, $p_*\mathbf{F}(c-a-b)$ is a locally free sheaf
of rank
\begin{equation}\label{rk=}
\rk (p_*\mathbf{F}(c-a-b))=h^0(F(c-a-b)),
\end{equation}
where $h^0(F(c-a-b))$ is given in (\ref{h0(F...)=1}). Hence
$\boldsymbol{\pi}:\boldsymbol{\mathcal{W}}\to\mathbf{X}$ is a locally trivial projective bundle, and there is a canonical epimorphism of vector bundles on $\boldsymbol{\mathcal{W}}$
\begin{equation}\label{can epi}
\epsilon:\ \boldsymbol{\pi}^*((p_*\mathbf{F}(c-a-b))^{\vee})
\twoheadrightarrow\calo_{\boldsymbol{\mathcal{W}}}(1).
\end{equation}

Consider the $G$-action on $\mathbf{X}$ making the projection $\Phi:\mathbf{X}\to X$ a principal $G$-bundle (see Theorem \ref{Thm 2}(ii)). It follows from the definition of $\boldsymbol{\mathcal{W}}$ and Remark \ref{reduction step}(iii) that this action lifts to a $G$-action on $\boldsymbol{\mathcal{W}}$ such that $\boldsymbol{\pi}$ is a $G$-invariant morphism. We thus obtain a cartesian diagram of principal $G$-bundles 
\begin{equation}\label{diag W}
\xymatrix{ 
\boldsymbol{\mathcal{W}} \ar[d]_{\boldsymbol{\pi}}\ar[r]^{\tilde{\Phi}} & \mathcal{W} \ar[d]^{\pi} \\
\mathbf{X}\ar[r]^-{\Phi} & X,}
\end{equation}
where $\mathcal{W}=\boldsymbol{\mathcal{W}}//G$ is a geometric factor,
$\tilde{\Phi}:\boldsymbol{\mathcal{W}}\to\mathcal{W}$ is a canonical projection, and
$\pi:\mathcal{W}\to X$ is the induced morphism.

Let $\boldsymbol{\mathcal{W}}\xleftarrow{\tilde{p}}\p3\times
\boldsymbol{\mathcal{W}}\xrightarrow{\tilde{\boldsymbol{\pi}}}
\p3\times\boldsymbol{\mathcal{W}}$
be the induced projections. The canonical epimorphism 
$\epsilon$ from (\ref{can epi}) induces a morphism
\begin{equation}\label{bf s}
\mathbf{\tilde{s}}:\ \op3\boxtimes\calo_{\boldsymbol{\mathcal{W}}}(-1)
\xrightarrow{\tilde{p}^*(\epsilon^{\vee})}\tilde{p}^*\boldsymbol{\pi}^*p_*
\mathbf{F}(c-a-b)=\tilde{\boldsymbol{\pi}}^*p^*p_*
\mathbf{F}(c-a-b)\xrightarrow{\tilde{\boldsymbol{\pi}}^*ev}
\mathbf{F}_{\boldsymbol{\mathcal{W}}}(c-a-b).
\end{equation}
(Note that, here, $\mathbf{F}_{\boldsymbol{\mathcal{W}}}= \tilde{{\boldsymbol{\pi}}}^*\mathbf{F}$, according to our agreement on notation.)

\begin{theorem}\label{Thm 4}
(i) The variety $\mathcal{W}$ is described as $\mathcal{W}=\{(x,C)\ |\ x=(S,[E])\in X,$ and $C=(s)_0$ for some $0\ne s\in H^0(F(c-a-b)),$ where $F$ is determined by the pair $x=(S,[E])$ via the reduction step $(\ref{F from E}) \}$. In addition, the morphism $\pi:\mathcal{W}\to X$
is given by $(x,C)\mapsto x$, and $\pi^{-1}(x)=P(H^0(F(c-a-b)))$.

(ii) The vertical maps $\boldsymbol{\pi}:\boldsymbol{\mathcal{W}}\to\mathbf{X}$ and $\pi:\mathcal{W}\to X$ in \eqref{diag W} are locally trivial $\mathbb{P}^{m}$-fibrations, where 
\begin{equation}\label{formula for m}
m=m(e,a,b,c):=h^0(F(c-a-b))-1,
\end{equation} 
and $h^0(F(c-a-b))$ is given by \eqref{h0(F...)=1}. Therefore, 
$\dim \mathcal{W}=\dim X+m(e,a,b,c)$.
In particular, if $(e,a)\ne(0,0)$, then there is an isomorphism
$\pi:\mathcal{W}\xrightarrow{\simeq}X$.

(iii) There is an exact $\calo_{\p3\times\boldsymbol{\mathcal{W}}}$-triple
$0\to\op3\boxtimes\calo_{\boldsymbol{\mathcal{W}}}(-1)
\xrightarrow{\mathbf{\tilde{s}}}\mathbf{F}_{\boldsymbol{\mathcal{W}}}(c-a-b)\to\cali_{\boldsymbol{\tilde{\mathcal{C}}},
\p3\times\boldsymbol{\mathcal{W}}}(c-2a-b+e)\to0$,
where $\mathbf{\tilde{s}}$ is defined in \eqref{bf s} and
$\boldsymbol{\tilde{\mathcal{C}}}=(\mathbf{s})_0$ is a codimension 2 subscheme of $\p3\times\boldsymbol{\mathcal{W}}$. 
\end{theorem}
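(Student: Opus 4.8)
The plan is to read off statements (ii) and (iii) from Theorem~\ref{Thm 3} by cohomology and base change, and to obtain the set-theoretic description (i) by GIT descent along the principal $G$-bundle $\Phi:\mathbf{X}\to X$ of Theorem~\ref{Thm 2}(ii). \emph{For (ii):} by Theorem~\ref{Thm 3}(i), for every $(S,[E])\in X$ the reflexive sheaf $F=F(E,S)$ of Remark~\ref{reduction step} has $h^{0}(F(c-a-b))=m+1$ with $m=m(e,a,b,c)$ as in \eqref{formula for m}--\eqref{h0(F...)=1}, and $h^{1}(F(c-a-b))=0$. Since $\mathbf{F}$ is a flat family of reflexive sheaves over $\mathbf{X}$ (Remark~\ref{reduction step}(i)), Grauert's base change theorem applies: $p_{*}\mathbf{F}(c-a-b)$ is locally free of rank $m+1$, its formation commutes with any base change, and its fibre over $\mathbf{x}\in\mathbf{X}$ is $H^{0}(F(c-a-b))$ with $(S,[E])=\Phi(\mathbf{x})$; this is \eqref{rk=}. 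Hence $\boldsymbol{\pi}:\boldsymbol{\mathcal{W}}=\mathbb{P}((p_{*}\mathbf{F}(c-a-b))^{\vee})\to\mathbf{X}$ of \eqref{defn W} is a Zariski-locally trivial $\mathbb{P}^{m}$-bundle, and, by the convention $\mathbb{P}(\mathcal{G})=\mathrm{Proj}\,S^{\cdot}\mathcal{G}$, its fibre over $\mathbf{x}$ is the space $P\bigl(H^{0}(F(c-a-b))\bigr)$ of lines in $H^{0}(F(c-a-b))$. The dimension formula, and, when $(e,a)\neq(0,0)$, the isomorphism $\pi:\mathcal{W}\xrightarrow{\simeq}X$ (then $m=0$ by \eqref{h0(F...)=1}), follow once the descent below is established.

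\emph{Descent and (i).} By Remark~\ref{reduction step}(iii), $\mathbf{F}$ carries a $GL(N_{m})$-linearization; as scalars act trivially on $\boldsymbol{\mathcal{W}}$, this induces a $G=PGL(N_{m})$-action on $\boldsymbol{\mathcal{W}}$ making $\boldsymbol{\pi}$ and the epimorphism \eqref{can epi} $G$-equivariant. This action is free (it is free on $\mathbf{X}$ and $\boldsymbol{\pi}$ is $G$-equivariant), and since $\Phi:\mathbf{X}\to X$ is a principal $G$-bundle, the geometric quotient $\mathcal{W}=\boldsymbol{\mathcal{W}}//G$ exists, $\tilde{\Phi}$ is a principal $G$-bundle, and \eqref{diag W} is cartesian; as principal bundles are faithfully flat and the $\mathbb{P}^{m}$-bundle structure is $G$-equivariant, it descends to a Zariski-locally trivial $\mathbb{P}^{m}$-bundle structure on $\pi:\mathcal{W}\to X$. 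A closed point of $\mathcal{W}$ is a $G$-orbit in $\boldsymbol{\mathcal{W}}$, hence is given by $x=(S,[E])\in X$ together with a line $\ell\subset H^{0}(F(c-a-b))$, $F=F(E,S)$ (the datum of $\ell$ is well-posed because the $G$-linearization of $\mathbf{F}$ identifies the spaces $H^{0}(\mathbf{F}_{\mathbf{x}}(c-a-b))$ over the lifts $\mathbf{x}$ of $x$ compatibly up to scalars). Finally, such a line $\ell$ is the same datum as the subscheme $C=(s)_{0}$, $0\neq s\in\ell$: by Theorem~\ref{Thm 3}(ii) this $C$ has pure codimension $2$ (a complete intersection $S\cap S'$, $\deg S'=c-a$), and $\ell\mapsto C$ is injective --- trivially if $m=0$, and via the linear identification of $P(H^{0}(F(c-a-b)))$, resp. its dual, with a sub-linear system of $|\calo_{S}(c-a)|$, resp. of $|\op3(c-b)|$, furnished by Theorem~\ref{Thm 3}(iii) when $m\in\{1,2\}$ (using, for $m=2$, the resolution $0\to\op3(-c)\to 3\op3\to F(c)\to 0$ from the proof of Theorem~\ref{Thm 3}(iii)). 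This gives the description of $\mathcal{W}$ and of $\pi^{-1}(x)=P(H^{0}(F(c-a-b)))$.

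\emph{Statement (iii).} By \eqref{bf s}, the restriction of $\mathbf{\tilde{s}}$ to the fibre $\p3\times\{w\}$ over a closed point $w=(\mathbf{x},\ell)\in\boldsymbol{\mathcal{W}}$ is $\op3\otimes\ell\hookrightarrow\op3\otimes H^{0}(F(c-a-b))\xrightarrow{ev}F(c-a-b)$, that is (choosing a generator of $\ell$) the section $s:\op3\to F(c-a-b)$ with $\ell=\mathbf{k}s$, $F=F(E,S)$. By Theorem~\ref{Thm 3}(ii) the zero scheme $(s)_{0}$ has codimension $2$, so $\mathbf{\tilde{s}}|_{\p3\times\{w\}}$ is injective and its cokernel is the twisted ideal sheaf appearing in \eqref{s for F} with $C=(s)_{0}$. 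Since $\op3\boxtimes\calo_{\boldsymbol{\mathcal{W}}}(-1)$ and $\mathbf{F}_{\boldsymbol{\mathcal{W}}}(c-a-b)$ are flat over $\boldsymbol{\mathcal{W}}$ and $\mathbf{\tilde{s}}$ is fibrewise injective, the local criterion of flatness gives that $\mathbf{\tilde{s}}$ is injective, that $\coker(\mathbf{\tilde{s}})$ is flat over $\boldsymbol{\mathcal{W}}$, and that its formation commutes with base change; being fibrewise the twisted ideal sheaf of a codimension-$2$ subscheme it is torsion-free of rank $1$, hence of the form $\cali_{\boldsymbol{\tilde{\mathcal{C}}},\p3\times\boldsymbol{\mathcal{W}}}(c-2a-b+e)$ up to a twist by a line bundle pulled back from $\boldsymbol{\mathcal{W}}$ (normalized via \eqref{det F}), where $\boldsymbol{\tilde{\mathcal{C}}}=(\mathbf{\tilde{s}})_{0}$ is flat of relative codimension $2$ over $\boldsymbol{\mathcal{W}}$. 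This is the asserted $\calo_{\p3\times\boldsymbol{\mathcal{W}}}$-triple, and its restriction to $\p3\times\{w\}$ recovers \eqref{s for F} with $C=(s)_{0}$, consistently with (i).

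\emph{Main obstacle.} The only non-formal ingredient is the constancy of $h^{0}(F(c-a-b))$ together with the vanishing $h^{1}(F(c-a-b))=0$ over $X$: this is exactly the content of Theorem~\ref{Thm 3} and is what makes Grauert's theorem applicable. Everything else --- the projective bundle \eqref{defn W}, the descent along $\Phi$, and the fibrewise-to-global argument of (iii) --- is standard. The secondary point requiring care is the injectivity of the map $\ell\mapsto C$ in the cases $m\in\{1,2\}$ (i.e. $e=a=0$), which is supplied by the linear identifications of Theorem~\ref{Thm 3}(iii).
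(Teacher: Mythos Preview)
Your proof is correct and follows essentially the same approach as the paper's (very terse) proof; in fact you supply considerably more detail for (i) and (iii) than the paper does. There is one point where your route differs slightly from the paper's and where a small gap appears.

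For the local triviality of $\pi:\mathcal{W}\to X$, the paper appeals directly to Theorem~\ref{Thm 3}(iii): the identification of $P(H^{0}(F(c-a-b)))$ (or its dual) with a linear subspace of a fixed linear series $|\calo_{S}(c-a)|$ or $|\op3(c-b)|$ can be globalized, embedding $\mathcal{W}$ as a projective subbundle of a relative linear series over $X$; this gives Zariski-local triviality. You instead argue by faithfully flat descent along the principal $PGL(N_{m})$-bundle $\Phi$. But descent of a $G$-equivariant Zariski $\mathbb{P}^{m}$-bundle along a $PGL$-torsor only produces a Severi--Brauer scheme over $X$, i.e.\ an \emph{\'etale}-locally trivial $\mathbb{P}^{m}$-bundle; concretely, the $GL(N_{m})$-linearized vector bundle $p_{*}\mathbf{F}(c-a-b)$ has nontrivial central weight and need not descend to a vector bundle on $X$. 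So your sentence ``it descends to a Zariski-locally trivial $\mathbb{P}^{m}$-bundle'' overclaims. The fix is immediate: you already invoke Theorem~\ref{Thm 3}(iii) for the injectivity of $\ell\mapsto C$ when $m\in\{1,2\}$, and the same linear identification furnishes the embedding of $\mathcal{W}$ into a Zariski-locally trivial projective bundle over $X$, from which Zariski-local triviality of $\pi$ follows (for $m=0$ the question is vacuous). With this reroute your argument agrees with the paper's.
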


\begin{proof}
Statement  (i) follows from the base change and the definition of $\boldsymbol{\mathcal{W}}$ and $\mathcal{W}$. In (ii), the local triviality of the fibration $\boldsymbol{\pi}$ is clear, and Theorem \ref{Thm 3}(iii) yields the local triviality of the fibration $\pi$.
The isomorphism (\ref{isom pi}) is a corollary of (\ref{h0(F...)=1}). Statement (iii) follows from the definition of the morphism $\mathbf{s}$ in (\ref{bf s}).
\end{proof}

Now consider the dense open subset $W$ of $\mathcal{W}$ defined in \eqref{W as a set}:
\begin{equation}\label{W in calW}
\xymatrix{
W=\{([E],S,C)\in\mathcal{W}\ |\ C\ is\ smooth\}\ \ar@{^{(}->}[rrr]^-{\mathrm{dense\ open}} & & & \mathcal{W}},
\end{equation}
and set
\begin{equation*}
\mathbf{W}:=\boldsymbol{\mathcal{W}}\times_{\mathcal{W}}W
\xrightarrow{\boldsymbol{\pi}}\mathbf{X}, \ \ \ \ \ 
\mathbf{F}_{\mathbf{W}}:=
\mathbf{F}_{\boldsymbol{\mathcal{W}}}|_{\p3\times\mathbf{W}},\ \ \ \boldsymbol{\mathcal{C}}:=\boldsymbol{\tilde{\mathcal{C}}}
\times_{\mathcal{W}}W.
\end{equation*}
In view of Theorem \ref{Thm 3}(ii) the morphisms $\mathbf{W}\xrightarrow{\pi}\mathbf{X}$ and $W\xrightarrow{\pi}X$ are surjective. Thus from Theorem \ref{Thm 4} we obtain
\begin{corollary}\label{cor 5.3}
(i) $\mathbf{W}\xrightarrow{\pi}\mathbf{X}$ and $W\xrightarrow{\pi}X$ are open subfibrations of locally trivial
$\mathbb{P}^{m}$-fibrations, where $m=m(e,a,b,c)$ is defined in \eqref{formula for m}, and
\begin{equation}\label{dim W tau}
\dim \mathcal{W}=\dim X+m(e,a,b,c).
\end{equation}
In particular, if $(e,a)\ne(0,0)$, then there is an isomorphism
\begin{equation}\label{isom pi}
\pi:W\xrightarrow{\simeq}X.
\end{equation}

(ii) There is an exact $\calo_{\p3\times\mathbf{W}}$-triple
\begin{equation}\label{global C}
0\to\op3\boxtimes\calo_{\mathbf{W}}(-1)
\xrightarrow{\mathbf{s}}\mathbf{F}_{\mathbf{W}}(c-a-b)\to\cali_{\boldsymbol{\mathcal{C}},
\p3\times\mathbf{W}}(c-2a-b+e)\to0,
\end{equation}
where $\mathbf{s}=\mathbf{\tilde{s}}|_{\p3\times\mathbf{W}}$.
This triple being restricted onto $\p3\times\{\mathbf{w}\}$, for an arbitrary point $\mathbf{w}\in\mathbf{W}$, coincides with the triple (\ref{s for F}), in which we set $s=\mathbf{s}\otimes\mathbf{k}(\mathbf{w})$,
$F=\mathbf{F}_{\mathbf{W}}|_{\p3\times\{\mathbf{w}\}}$,
and $C=\boldsymbol{\mathcal{C}}\cap\p3\times
\{\mathbf{w}\}$.
\end{corollary}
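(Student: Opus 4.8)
The plan is to obtain Corollary \ref{cor 5.3} as a direct transcription of Theorem \ref{Thm 4} onto the dense open subsets $W\subset\mathcal{W}$ and $\mathbf{W}=\boldsymbol{\mathcal{W}}\times_{\mathcal{W}}W\subset\boldsymbol{\mathcal{W}}$. First I would invoke Theorem \ref{Thm 4}(i)--(ii): the projections $\boldsymbol{\pi}:\boldsymbol{\mathcal{W}}\to\mathbf{X}$ and $\pi:\mathcal{W}\to X$ are locally trivial $\mathbb{P}^{m}$-fibrations with $m=m(e,a,b,c)$ as in \eqref{formula for m}, the fibre of $\pi$ over $x=(S,[E])\in X$ being $P(H^0(F(c-a-b)))$, and $\dim\mathcal{W}=\dim X+m$, which is \eqref{dim W tau}. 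Since $X$ is irreducible, $\mathcal{W}$ and $\boldsymbol{\mathcal{W}}$ are irreducible as well; by Theorem \ref{Thm 3}(ii) a general $0\ne s\in H^0(F(c-a-b))$ has smooth zero locus $(s)_0$, so the condition cutting out $W$ from $\mathcal{W}$ in \eqref{W in calW} is open and meets each fibre of $\pi$, whence $W$ is dense open in $\mathcal{W}$ and, by flatness of $\tilde{\Phi}$, $\mathbf{W}$ is dense open in $\boldsymbol{\mathcal{W}}$. By the last convention of the Introduction, the restriction of a locally trivial $\mathbb{P}^{m}$-fibration to an open subset of its total space is an open subfibration of it; applied to $W\hookrightarrow\mathcal{W}$ and $\mathbf{W}\hookrightarrow\boldsymbol{\mathcal{W}}$ this gives the first assertion of (i). Here I would also record, from Theorem \ref{Thm 3}(ii) — as already noted before the statement — that $W\cap\pi^{-1}(x)\ne\emptyset$ for every $x\in X$, so that $\pi|_W:W\to X$ is surjective, and, after pulling back along the principal $G$-bundle $\Phi:\mathbf{X}\to X$, that $\boldsymbol{\pi}|_{\mathbf{W}}:\mathbf{W}\to\mathbf{X}$ is surjective too.

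Next, to get \eqref{isom pi}, I would treat the case $(e,a)\ne(0,0)$ separately: then $m=0$ by \eqref{h0(F...)=1}, hence $\pi:\mathcal{W}\xrightarrow{\simeq}X$ by Theorem \ref{Thm 4}(ii); composing the open immersion $W\hookrightarrow\mathcal{W}$ with this isomorphism presents $\pi|_W:W\to X$ as a surjective (by the previous paragraph) open immersion, which is therefore an isomorphism.

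For part (ii), the plan is to restrict the exact $\calo_{\p3\times\boldsymbol{\mathcal{W}}}$-triple of Theorem \ref{Thm 4}(iii) along the open immersion $\p3\times\mathbf{W}\hookrightarrow\p3\times\boldsymbol{\mathcal{W}}$: since $\mathbf{W}=\boldsymbol{\mathcal{W}}\times_{\mathcal{W}}W$ and $\boldsymbol{\mathcal{C}}=\boldsymbol{\tilde{\mathcal{C}}}\times_{\mathcal{W}}W$, this restriction carries $\mathbf{\tilde{s}}$ to $\mathbf{s}=\mathbf{\tilde{s}}|_{\p3\times\mathbf{W}}$ and the right-hand term to $\cali_{\boldsymbol{\mathcal{C}},\p3\times\mathbf{W}}(c-2a-b+e)$, giving the triple \eqref{global C}. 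To see that \eqref{global C} restricts fibrewise to \eqref{s for F}, I would use that $\op3\boxtimes\calo_{\mathbf{W}}(-1)$ and $\mathbf{F}_{\mathbf{W}}$ are flat over $\mathbf{W}$ and that $\boldsymbol{\mathcal{C}}\to\mathbf{W}$ has equidimensional fibres of codimension $2$ in $\p3$ — the complete intersection curves $C=S\cap S'$ of Theorem \ref{Thm 3}(ii) — so that $\calo_{\boldsymbol{\mathcal{C}}}$, and hence $\cali_{\boldsymbol{\mathcal{C}},\p3\times\mathbf{W}}$, is flat over $\mathbf{W}$; consequently the sequence stays exact after pulling back to $\p3\times\{\mathbf{w}\}$, and under the identifications $\op3\boxtimes\calo_{\mathbf{W}}(-1)|_{\p3\times\{\mathbf{w}\}}=\op3$ with $s=\mathbf{s}\otimes\mathbf{k}(\mathbf{w})$, $F=\mathbf{F}_{\mathbf{W}}|_{\p3\times\{\mathbf{w}\}}$ and $C=\boldsymbol{\mathcal{C}}\cap\p3\times\{\mathbf{w}\}=(s)_0$ it becomes precisely \eqref{s for F}.

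I do not expect a genuine obstacle here: once Theorems \ref{Thm 3} and \ref{Thm 4} are in hand the statement is essentially bookkeeping. The two points that do deserve a sentence are (a) that the restricted fibrations remain \emph{surjective} onto all of $X$ and $\mathbf{X}$, which is exactly what the ``general section'' clause of Theorem \ref{Thm 3}(ii) provides on each fibre, and (b) the flatness of $\boldsymbol{\mathcal{C}}$ over $\mathbf{W}$ needed to restrict \eqref{global C} over individual points of $\mathbf{W}$; the latter follows from the fibres of $\boldsymbol{\mathcal{C}}\to\mathbf{W}$ being complete intersection curves of constant degree, or alternatively from the flatness of $\op3\boxtimes\calo_{\mathbf{W}}(-1)$ and $\mathbf{F}_{\mathbf{W}}$ together with the fact (Theorem \ref{Thm 3.1}) that $\mathbf{s}\otimes\mathbf{k}(\mathbf{w})$ is a codimension-$2$ section for every $\mathbf{w}$.
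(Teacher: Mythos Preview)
Your proposal is correct and follows essentially the same route as the paper: the paper derives the corollary in one line by noting that Theorem~\ref{Thm 3}(ii) makes the restricted projections $W\to X$ and $\mathbf{W}\to\mathbf{X}$ surjective, and then reads off everything else from Theorem~\ref{Thm 4}. Your write-up is simply a more detailed version of this, with the additional care about flatness of $\boldsymbol{\mathcal{C}}$ over $\mathbf{W}$ (which the paper leaves implicit) being a welcome clarification rather than a departure.
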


\begin{remark}\label{W reduced}
\textit{According to Theorem \ref{Thm 2}(ii) and the above Corollary, $\mathbf{W}\xrightarrow{\boldsymbol{\pi}}
\mathbf{X}\xrightarrow{\boldsymbol{\theta}}Y
\xrightarrow{\varphi}N$ is a composition of two open subfibrations of
projective fibrations and of a principal bundle. Hence, since
$N$ is a reduced scheme by \cite{Ein}, it
follows that $\mathbf{W}$ is a reduced scheme.\\
(ii) Applying the functor $\tilde{{\boldsymbol{\pi}}}^*$ to the epimorphism $\boldsymbol{\psi}$ in (\ref{E from F}) we obtain an epimorphism 
${\boldsymbol{\psi}}_{\mathbf{W}}:\ \mathbf{F}_{\mathbf{W}}
\twoheadrightarrow(\mathbf{L}_{\mathbf{\Gamma}}^{\vee}(b,1))_{\mathbf{W}}$, hence also an epimorphism
\begin{equation}\label{psi W}
{\boldsymbol{\psi}}_{\mathbf{W}}:\ \mathbf{F}_{\mathbf{W}}
\twoheadrightarrow(\mathbf{L}_{\mathbf{\Gamma}}^{\vee}(b,1))_{\mathbf{W}}.
\end{equation}
}
\end{remark}

\vspace{5mm}

\section{A new family $\underline{\mathbf{F}}$ of reflexive sheaves} \label{section 5}

\vspace{2mm}

In this section we construct a new family $\underline{\mathbf{F}}$ of reflexive sheaves with Chern classes (\ref{ci(F) 0}) and with the same properties as that of the sheaves of the family $\mathbf{F}$
-- see Theorem \ref{Thm 7}. As above, we fix the numbers $e,a,b,c$ which determine an Ein component $\overline{N}$ of $M(e,n)$, where $n=c^2-a^2-b^2-e(c-a-b)$. 
Take an arbitrary point $(S,C)\in\mathrm{R}$ and compute the
number $h^0(\calo_C(c+a-e))$. Since $C$ is a complete intersection curve $C=S\cap S'$ (see Remark \ref{complete int}(i)), we obtain the equality
\begin{equation}\label{det NC}
\det N_{C/\p3}=\calo_C(2c-a-b)
\end{equation}
and the exact triples
$$
0\to\cali_C(c+a-e)\to\calo_{\p3}(c+a-e)\to\calo_C(c+a-e)\to0,
$$
$$
0\to\calo_{\p3}(2a+b-c-e)\to\calo_{\p3}(a+b-e)\oplus
\calo_{\p3}(2a-e)\to\cali_C(c+a-e)\to0.
$$
These triples yield
\begin{equation}\label{h0=}
h^0(\calo_C(c+a-e))=\binom{c+a-e+3}{3}-\binom{a+b-e+3}{3}-
\binom{2a-e+3}{3}+\delta(e,a,b,c),
\end{equation}
where
\begin{equation}\label{delta}
\delta(e,a,b,c)=\left\{
\begin{array}{cc}
\binom{2a+b-c-e+3}{3}, & \mathrm{if}\ c\le 2a+b-e, \\
& \\
0, &  \mathrm{if}\ c>2a+b-e.
\end{array}
\right.
\end{equation}

For an arbitrary point $x=(S,C)\in\mathrm{R}$ consider the groups
$$
\mathrm{Ext}^i(x):=\mathrm{Ext}^i(\cali_C(c-2a-b+e),\calo_{\p3}), \ \ \ i=0,1.
$$
From (\ref{det NC}) it follows that
\begin{equation}\label{local ext}
\begin{split}
&\cale xt^1(\cali_C(c-2a-b+e),\calo_{\p3})=
\cale xt^2(\calo_C(c-2a-b+e),\calo_{\p3})=\\
& \det N_{C/\p3}(2a+b-c-e)\simeq\calo_C(c+a-e).
\end{split}
\end{equation}
Since $h^i(\calh om(\cali_C(c-2a-b+e),\calo_{\p3}))=
h^i(\op3(2a+b-c-e))=0,\ i=0,1,2,$ from (\ref{h0=}), (\ref{local ext}) and the spectral sequence of local-to-global Ext's we obtain
\begin{equation}\label{dim Ext0}
\dim\mathrm{Ext}^0(x)=h^0(\op3(2a+b-c-e)),
\end{equation}

\begin{equation}\label{dim Ext1}
\begin{split}
& \dim\mathrm{Ext}^1(x)=h^0(\calo_C(c+a-e))=\\
& \binom{c+a-e+3}{3}-\binom{a+b-e+3}{3}-
\binom{2a-e+3}{3}+\delta(e,a,b,c).
\end{split}
\end{equation}

\begin{remark}\label{sheaves cale i}
Consider the incidence subvariety $\Sigma$ of $\p3\times\mathrm{R}$ defined in \eqref{def of Sigma}.
In view of (\ref{dim Ext0})--(\ref{dim Ext1}) the dimensions of the groups $\mathrm{Ext}^1(x)$ do not depend on the point 
$x=(S,C)\in\mathrm{R},$ so that the sheaves
$$
\cale_i:=\cale xt^i_{p_2}(\cali_{\Sigma,\p3\times\mathrm{R}}
(c-2a-b+e),\calo_{\p3\times\mathrm{R}}),\ \ i=0,1,
$$
by \cite{BPS} commute with the base change in the sense of \cite[Remark 1.5]{L}. In particular,
the sheaf $\cale_1$ is a locally free $\calo_{\mathrm{R}}$-sheaf of rank 
\begin{equation*}
\rk\cale_1=h^0(\calo_C(c+a-e))
\end{equation*}
and for any $x=(S,C)\in\mathrm{R}$ one has the base change
isomorphism $\cale_1\otimes\mathbf{k}(x)\overset{\simeq}{\to}
\mathrm{Ext}^1(x)$.
\end{remark}

Consider the rational variety 
\begin{equation}\label{T}
\mathbf{T}:=\mathbb{P}(\cale_1^{\vee})
\end{equation} 
with its structure morphism $\boldsymbol{\mu}:\mathbf{T}\to\mathrm{R}$ which is a locally trivial projective fibration with fibre of dimension $h^0(\calo_C(c+a-e))-1$. We thus obtain from (\ref{dim R}) and (\ref{dim Ext1}) the formula for the dimension of $\mathbf{T}$:
\begin{equation}\label{dim T}
\begin{split}
& \dim \mathbf{T}=\binom{c-b+3}{3}+\binom{c-a+3}{3}-\binom{b-a+3}{3}+
\binom{c+a-e+3}{3}\\
&-\binom{a+b-e+3}{3}-\binom{2a-e+3}{3}+\delta(e,a,b,c)-3.
\end{split}
\end{equation}
By construction, $\mathbf{T}$ has a set-theoretical description \eqref{bfT as a set}, and the structure morphism
$\boldsymbol{\mu}$ coincides with \eqref{set-th def of mu}.
In addition, each point $t=(S,C,\xi)\in\mathbf{T}$ defines a non-trivial (class of proportionality of an) extension of $\calo_{\p3}$-sheaves
\begin{equation}\label{extension t}
\xi:\ \ \ \ \ 0\to\calo_{\p3}(a+b-c)\to F_t\to\cali_C(e-a)\to0. 
\end{equation}
\begin{remark}\label{Serre constr}
This is the well-known Serre construction -- cf. \cite{H-vb}, \cite{H-r}, \cite{OSS}. In particular, $F_t$ is a reflexive sheaf with Chern classes given by \eqref{ci(F) 0}.
\end{remark}

Globalizing over $\mathbf{T}$ the triple \eqref{extension t} we obtain the following result.

\begin{theorem}\label{Thm 7}
On $\p3\times \mathbf{T}$ there is a sheaf $\underline{\mathbf{F}}$ defined as the universal extension sheaf \eqref{univ FT}.
The sheaf $\underline{\mathbf{F}}$ is a family of reflexive sheaves \eqref{extension t} on $\mathbb{P}^3$ with the base $\mathbf{T}$. 
\end{theorem}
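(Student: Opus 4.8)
The plan is to obtain $\underline{\mathbf{F}}$ as the universal extension sheaf of the Serre construction \eqref{extension t} relativized over $\mathbf{T}=\mathbb{P}(\cale_1^{\vee})$, following Lange \cite{L}, and then to read off its fibres. Write $q\colon\p3\times\mathbf{T}\to\mathbf{T}$ for the projection and set $\calf:=\cali_{\boldsymbol{\Sigma},\p3\times\mathbf{T}}(e-a)$ and $\calm:=\calo_{\p3}(a+b-c)\boxtimes\calo_{\mathbf{T}}(1)$. Both sheaves are flat over $\mathbf{T}$: the sheaf $\calm$ is invertible, and $\calf$ is $\mathbf{T}$-flat because $\boldsymbol{\Sigma}=\Sigma\times_{\mathrm{R}}\mathbf{T}$ is flat over $\mathbf{T}$ --- being the pullback along the flat morphism $\boldsymbol{\mu}$ of the flat family $\Sigma\to\mathrm{R}$ of complete intersection curves --- so that $0\to\calf\to\calo_{\p3\times\mathbf{T}}(e-a)\to\calo_{\boldsymbol{\Sigma}}(e-a)\to0$ exhibits $\calf$ as an extension of $\mathbf{T}$-flat sheaves. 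Since fibrewise $\cali_C(e-a)=\cali_C(c-2a-b+e)\otimes\calo_{\p3}(a+b-c)$, the projection formula combined with the base-change property of $\cale_1$ recorded in Remark \ref{sheaves cale i} --- this is precisely where \cite{BPS} and the constancy \eqref{dim Ext1} of $\dim\mathrm{Ext}^1(x)$ come in --- yields a canonical isomorphism $\cale xt^1_q(\calf,\calm)\simeq\boldsymbol{\mu}^*\cale_1\otimes\calo_{\mathbf{T}}(1)$. The tautological inclusion $\calo_{\mathbf{T}}(-1)\hookrightarrow\boldsymbol{\mu}^*\cale_1$ attached to $\mathbf{T}=\mathbb{P}(\cale_1^{\vee})$ now supplies a distinguished global section of $\cale xt^1_q(\calf,\calm)$ whose value at a point $t=(x,\xi)\in\mathbf{T}$ spans the line $\xi\subset\mathrm{Ext}^1(x)$.

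Applying the universal-extension construction of \cite{L}, this section is realized by a genuine extension class in $\mathrm{Ext}^1_{\p3\times\mathbf{T}}(\calf,\calm)$, i.e. by an exact triple of the shape \eqref{univ FT}; this triple defines $\underline{\mathbf{F}}$. It remains to identify the fibres. From \eqref{univ FT}, $\underline{\mathbf{F}}$ is an extension of the $\mathbf{T}$-flat sheaf $\calf$ by the $\mathbf{T}$-flat sheaf $\calm$, hence is itself flat over $\mathbf{T}$. Fix $t=(x,\xi)\in\mathbf{T}$ with $x=(S,C)\in\mathrm{R}$. Flatness of $\calf$ gives $\mathrm{Tor}_1^{\calo_{\mathbf{T}}}(\calf,\mathbf{k}(t))=0$, so the restriction of \eqref{univ FT} to $\p3\times\{t\}$ stays exact; since $\boldsymbol{\Sigma}\cap(\p3\times\{t\})=C$ and $\calm|_{\p3\times\{t\}}\simeq\calo_{\p3}(a+b-c)$, it becomes $0\to\calo_{\p3}(a+b-c)\to\underline{\mathbf{F}}|_{\p3\times\{t\}}\to\cali_C(e-a)\to0$, and by construction of the universal extension its class under the identification $\mathrm{Ext}^1(\cali_C(e-a),\calo_{\p3}(a+b-c))\simeq\mathrm{Ext}^1(x)$ is a nonzero representative of $\xi$. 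Thus $\underline{\mathbf{F}}|_{\p3\times\{t\}}\simeq F_t$, and by the Serre construction for reflexive sheaves (Remark \ref{Serre constr}) --- $C$ being a smooth complete intersection curve --- $F_t$ is reflexive with Chern classes \eqref{ci(F) 0}. Hence $\underline{\mathbf{F}}$ is a flat family over $\mathbf{T}$ of reflexive sheaves of the form \eqref{extension t}, which is the assertion of the theorem.

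I expect the one genuinely delicate point to be the step, inside the second paragraph, of promoting the distinguished section of $\cale xt^1_q(\calf,\calm)$ to an honest global class in $\mathrm{Ext}^1_{\p3\times\mathbf{T}}(\calf,\calm)$. In the local-to-global spectral sequence for relative $\cale xt$ along $q$, the obstruction to this lies in $H^2\bigl(\mathbf{T},\cale xt^0_q(\calf,\calm)\bigr)$, and since $\cale xt^0_q(\calf,\calm)\simeq H^0\bigl(\op3(2a+b-c-e)\bigr)\otimes\calo_{\mathbf{T}}(1)$ is nonzero exactly when $c\le 2a+b-e$, i.e. when $\delta(e,a,b,c)\neq0$, the spectral sequence need not degenerate in general. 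Resolving this is precisely what Lange's construction in \cite{L} accomplishes --- it builds the family $\underline{\mathbf{F}}$ directly rather than through the degenerate spectral sequence --- and one need only check that the normalization of the Grothendieck sheaf $\calo_{\mathbf{T}}(1)$ there agrees with the one used in \eqref{univ FT}.
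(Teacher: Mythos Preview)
Your proposal is correct and follows precisely the route the paper intends: the paper does not give a proof of this theorem at all, merely stating ``Globalizing over $\mathbf{T}$ the triple \eqref{extension t} we obtain the following result'' and relying implicitly on Lange's universal-extension machinery \cite{L} together with the base-change property of $\cale_1$ from Remark \ref{sheaves cale i}. Your write-up supplies exactly the details the paper suppresses, and your closing paragraph on the possible obstruction in $H^2(\mathbf{T},\cale xt^0_q(\calf,\calm))$ when $\delta(e,a,b,c)\neq0$ is a point the paper does not address explicitly.
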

In the remaining part of this section we study the question of producing a generalized null correlation bundle $E$ from an arbitrary reflexive sheaf $F$ of the family $\underline{\mathbf{F}}$.
A hint for this is given by the triple (\ref{E from F restricted}). In this triple a generalized null correlation bundle $E$ is obtained from $F$ by an analogue of the "inverse reduction step" (cf. Remark \ref{reduction step}(i)) as a kernel of an epimorphism $F\twoheadrightarrow\mathcal{O}_S(b)$. In fact, the following theorem is true which will be used in
the next Section.

\begin{theorem}\label{Thm 10}
Consider a subset $T$ of $\mathbf{T}$ consisting of those points $t=(S,C,\xi)\in\mathbf{T}$ for which there exists an epimorphism $\psi:\ F_t\twoheadrightarrow\mathcal{O}_S(b)$, with $F_t$ given by an extension (\ref{extension t}), such that $E=\ker\psi$ is locally free. Then $T$ is nonempty and $E$ is a generalized  null correlation bundle, $[E]\in N_{\mathrm{nc}}$.
\end{theorem}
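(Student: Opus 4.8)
\medskip
\noindent\emph{Plan of proof.} Two things must be shown: that $T\neq\emptyset$, and that for each $t=(S,C,\xi)\in T$ the locally free sheaf $\ker\psi$, renormalized as $E:=(\ker\psi)(c-b)$ so that $c_1(E)=e$ (in accordance with \eqref{set-theoretic def of h}), is a generalized null correlation bundle, i.e.\ $[E]\in N_{\mathrm{nc}}$. Non-emptiness I would read off from Sections~\ref{section 3}--\ref{section 4}: choose a point $w=([E_0],S,C)\in W$ with $[E_0]\in N$, which exists since $W\to X$ is surjective and $X$ dominates $N$. Put $F:=F(E_0,S)$. By Theorem~\ref{Thm 3}(ii) the twist $F(c-a-b)$ is an extension of $\cali_C(c-2a-b+e)$ by $\op3$, and it is non-split: it is reflexive by Remark~\ref{reduction step}(i), whereas the split extension $\op3\oplus\cali_C(c-2a-b+e)$ is not reflexive, $\cali_C$ having codimension $2$. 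Since $(S,C)\in\mathrm{R}$ — $S$ is smooth of degree $c-b$ and $C=S\cap S'$ is a smooth complete intersection with $\deg S'=c-a$, again by Theorem~\ref{Thm 3}(ii) — this extension is a point $\xi\in P(\mathrm{Ext}^1(x))$ with $x=(S,C)$, and $F_{(S,C,\xi)}\cong F$. Now \eqref{E from F restricted} supplies an epimorphism $\psi\colon F\twoheadrightarrow\calo_S(b)$ with locally free kernel $E_0(b-c)$, whence $(S,C,\xi)\in T$.

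For the main assertion, fix $t=(S,C,\xi)\in T$. By Remark~\ref{fibre of r}(i) the curve $C$ is a complete intersection $C=S\cap S'$ with $\deg S=c-b$ and $\deg S'=c-a$, so $\cali_C(c-2a-b+e)$ has the Koszul resolution \eqref{resolution IC}; fitting it together with the defining extension \eqref{extension t} of $F_t$ via the horseshoe lemma yields a length-$1$ locally free resolution
\[
\mathcal{R}_1\colon\quad 0\to\op3(e+b-2c)\to\op3(a+b-c)\oplus\op3(e-a+b-c)\oplus\op3(e-c)\to F_t\to 0 .
\]
Likewise $\calo_S(b)$ has the Koszul resolution $\mathcal{R}_2\colon\ 0\to\op3(2b-c)\to\op3(b)\to\calo_S(b)\to 0$. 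I would lift $\psi$ to a morphism of complexes $\tilde\psi\colon\mathcal{R}_1\to\mathcal{R}_2$ — possible because the successive obstructions lie in groups $H^1$ of line bundles on $\p3$, which all vanish — and pass to the shifted mapping cone $\mathrm{Cone}(\tilde\psi)[-1]$, which is quasi-isomorphic to $\ker\psi$. Its three terms, twisted by $\op3(c-b)$, turn out to be $\op3(-c+e)$, the bundle $\calh$ of \eqref{calh}, and $\op3(c)$, in this order, so that $E=(\ker\psi)(c-b)$ is realized as the unique non-vanishing cohomology sheaf, lying in the middle, of a three-term complex
\[
0\to\op3(-c+e)\xrightarrow{\ \lambda\ }\calh\xrightarrow{\ \mu\ }\op3(c)\to 0,\qquad \mu\lambda=0 .
\]

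It remains to see that this is a \emph{monad} in the sense of \eqref{ein monad}, i.e.\ that $\lambda$ is a sub-bundle inclusion and $\mu$ an epimorphism. As $\coker\mu$ is the degree-$1$ cohomology sheaf of the cone it vanishes, and since $\op3(c)$ is a line bundle $\mu$ is then an epimorphism of vector bundles, so $\ker\mu$ is a rank-$3$ sub-bundle of $\calh$; as $E=\ker\mu/\im\lambda$ is locally free of rank $2$ by hypothesis, $\im\lambda=\ker(\ker\mu\twoheadrightarrow E)$ is a rank-$1$ sub-bundle of $\calh$, and $\lambda$ is injective, its kernel being the degree-$(-1)$ cohomology sheaf of the cone. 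Hence $\lambda$ realizes $\op3(-c+e)$ as a sub-bundle of $\calh$, \eqref{ein monad} holds, and $E$ is by construction a generalized null correlation bundle for $e,a,b,c$; its stability, and thus $[E]\in N_{\mathrm{nc}}$, then follows from \cite[Prop.~3.1]{Rao} (see also \cite[Prop.~1.2(a)]{Ein}). The part I expect to cost the most effort is this passage from the formal mapping cone to an honest monad, together with the bookkeeping of the previous paragraph: one must verify that the Koszul data of $C$ (a complete intersection of bidegree $(c-b,c-a)$) and of $S$ make the middle term of $\mathrm{Cone}(\tilde\psi)[-1]$ collapse to exactly the four line bundles of $\calh$ with no cancellation against the outer terms, and that the hypothesis ``$\ker\psi$ locally free'' is genuinely what promotes the complex to a monad. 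Conceptually, the single line bundle $\op3(-c+e)$ on the left of the monad incarnates the cyclicity of the $H^0_*(\op3)$-module $H^1_*(E)$ highlighted before the statement; an alternative organization of the proof would establish that cyclicity directly from \eqref{extension t} and \eqref{E from F restricted} and then invoke the standard monad description of these bundles.
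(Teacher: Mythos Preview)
Your proof is correct and takes a genuinely different route from the paper's. For non-emptiness of $T$, both you and the paper argue essentially the same way, reading the datum $(S,C,\xi)$ off from a point of $W$ (or $\mathbf{X}$) via Theorem~\ref{Thm 3} and the triple \eqref{E from F restricted}.

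For the main assertion the approaches diverge. The paper does \emph{not} build the monad directly. Instead it first checks $h^0(E(m))=0$ for $m\le 0$ (stability), then shows that the composite $\psi\circ i$ of \eqref{extension t} and \eqref{E from F restricted} factors through $\calo_S(a+m)\xrightarrow{\cdot C}\calo_S(c+m)$; this produces a $3\times 3$ diagram whose snake yields an extension $0\to\op3(a+b-c)\to E\to\cali_{\tilde C}(c-a-b+e)\to 0$ for a certain double structure $\tilde C$ on $C$. Rao's result \cite[Example~3.3]{Rao} says $H^1_*(\cali_{\tilde C})$ is a cyclic $H^0_*(\op3)$-module, hence so is $H^1_*(E)$, and Ein's characterization \cite[Prop.~1.3]{Ein} then identifies $E$ as a generalized null correlation bundle. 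This is exactly the ``alternative organization'' you anticipate in your final sentence.

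Your mapping-cone argument is more self-contained: it constructs the monad \eqref{ein monad} explicitly from the Koszul data of $C$ and $S$, and uses the hypothesis ``$\ker\psi$ locally free'' precisely where it is needed, to promote the three-term complex to an honest monad. You avoid the external inputs from \cite{Rao} and \cite{Ein} (beyond stability). The paper's approach, by contrast, exposes the geometric content --- the double curve $\tilde C$ and the cyclicity of the intermediate cohomology --- which is what ties the construction back to Rao's original description of these bundles, and is arguably closer in spirit to the Serre-construction viewpoint used elsewhere in the paper.
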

\begin{proof} Clearly, $T$ is nonempty: it is enough to take a point $\mathbf{x}=(S,y)\in\mathbf{X}$ and set $[E]=\varphi(y)$,
so that the data $(F,C,\xi)$ are determined by the pair $\bigl(S,[E]\bigr)$
as in Theorem \ref{Thm 3}; in particular, $\xi$ is defined as the extension class of the triple (\ref{s for F}). Then for the point $t=(S,C,\xi)$ by (\ref{s for F}) the sheaf $F_t=F$ coincides with the sheaf $F_{\mathbf{x}}$ in the triple (\ref{E from F restricted}), and this triple shows that $t\in T$. 

Now take $t=(S,C,\xi)\in T$ and consider the triple (\ref{E from F restricted}) twisted by  $\calo_{\p3}(c-b+m)$:
\begin{equation}\label{again exact E,F}
0\to E(m)\to F(c-b+m)\overset{\psi}{\to}\calo_S(c+m)\to0,\ \ \  m\in\mathbb{Z}.
\end{equation} 
Respectively, the triple (\ref{extension t}) twisted by $\calo_{\p3}(c-b+m)$ yields
\begin{equation}\label{new extension t}
0\to\calo_{\p3}(a+m)\overset{i}{\to}F(c-b+m)\overset{\theta}{\to}\cali_C(m+c-a-b+e)\to0.\end{equation}
Besides we have a standard exact triple
\begin{equation}\label{res of I}
\begin{split}
& 0\to\calo_{\p3}(-c+e+m)\to\calo_{\p3}(-b+e+m)
\oplus\calo_{\p3}(-a+e+m)\to \\
& \cali_C(m+c-a-b+e)\to0.
\end{split}
\end{equation}
Substituting $m\le b-c$ into (\ref{new extension t}) and 
(\ref{res of I}) and using the inequalities $c>a+b,\ e\le0$ we obtain $h^0(F(m))\le0,\ m\le0$. Besides, since $Z\ne\emptyset$ and
$e-b\le0$, $h^0(\cali_{Z,S}(e-b+m))=0,\ m\le0$. Hence the triple
(\ref{F from E}) twisted by  $\calo_{\p3}(m)$ implies
\begin{equation}\label{E stable}
h^0(E(m))=0,\ \ \ m\le0.
\end{equation}
In particular, $h^0(E)=0$, i.~e. $E$ is stable. 

Now consider the triples (\ref{again exact E,F}) and (\ref{new extension t})
and the morphisms $\psi$ and $i$ therein. If the composition $\psi\circ i$ is zero then $i$ becomes a section of $E$ which contradicts (\ref{E stable}). Hence, the composition
$\psi\circ i$ factors as
$$
\psi\circ i:\  \calo_{\p3}(a+m)\overset{\psi'}{\twoheadrightarrow}
\calo_{S}(a+m)\overset{i'}{\to}\calo_S(c+m).
$$
Denote $U=\p3\setminus\mathrm{Sing}F$.
Since by (\ref{new extension t}) the morphism $i|_U:\calo_U(a+m)\to
F(c-b+m)|_U$ is a section of the locally free sheaf $F(c-b+m)|_U$
vanishing at the curve $C\cap U$, it follows that $i':\calo_{S}(a+m)\to\calo_S(c+m)$
is a multiplication by the equation of the divisor $C$ in $S$. 
Hence $\coker i'=\calo_C(c+m)$ and we obtain a commutative diagram
$$ 
\xymatrix{
& 0 \ar[d] & 0 \ar[d] & 0 \ar[d] & \\
0 \ar[r] & \calo_{\p3}(a+b-c+m) \ar[d]\ar[r]^-{\cdot S} & \calo_{\p3}(a+m) \ar[d]^-{i} \ar[r]^{\psi'} & \calo_S(a+m) \ar[d]^-{i'} \ar[r] & 0 \\
0 \ar[r] & E(m) \ar[d] \ar[r] &
F(c-b+m) \ar[d]^{\theta} \ar[r]^-{\psi} &
\calo_S(c+m) \ar[d] \ar[r] & 0 \\
0\ar[r] & \cali_{\tilde{C}}(c-a-b+e+m) \ar[d] \ar[r] & \cali_C(c-a-b+e+m) \ar[d] \ar[r]^-{\psi''} & \calo_C(c+m) \ar[r] \ar[d] & 0 \\
& 0 & 0 & 0, &} 
$$
in which $\psi''$ is induced by the morphisms $\psi$ and $\psi'$, and $\tilde{C}$ is a certain double scheme structure on the curve $C$.
Consider the bottom horizontal and left vertical triples in this diagram:
\begin{equation}\label{tilde C}
0\to\cali_{\tilde{C}}(c-a-b+e+m)\to\cali_C(c-a-b+e+m)\overset{\psi''}{\to}\calo_C(c+m)\to0,
\end{equation}
\begin{equation}\label{E, tilde C}
0\to\calo_{\p3}(a+b-c+m)\to E(m)\to\cali_{\tilde{C}}(m+c-a-b+e)\to0.
\end{equation}

The triple (\ref{tilde C}) by \cite[Example 3.3]{Rao} shows that 
the cohomology $H^0_*(\calo_{\p3})$-module $H^1_*(\cali_{\tilde{C}})$
as a graded module over the graded ring 
$H^0_*(\calo_{\p3})\simeq\mathbf{k}[x_0,x_1,x_2,x_3]$
has one generator. Hence the triple (\ref{E, tilde C}) implies that the cohomology $H^0_*(\calo_{\p3})$-module $H^1_*(E)$ also has one generator. This together with \cite[Prop. 1.3]{Ein} shows that $E$ is a generalized null correlation bundle.
\end{proof}

\vspace{2mm}

\section{Family of generalized null correlation bundles $\underline{\mathbf{E}}$ associated to $\underline{\mathbf{F}}$} \label{section 6}

\vspace{2mm}
In this section, starting with the family $\underline{\mathbf{F}}$ with rational base  $\mathbf{T}$, we 
produce a family $\underline{\mathbf{E}}$ of generalized null correlation bundles with certain rational base $V$. For this, 
we first prove Theorem \ref{Thm 11} in which we state certain
properties of the restriction of a reflexive sheaf $F_t$ of the
family $\underline{\mathbf{F}}$ onto a surface $S$, where $t=
(S,C,\xi)\in\mathbf{T}$. From these properties it follows that
the set $T$ from Theorem \ref{Thm 10} is a dense open subset 
$T$ of $\mathbf{T}$. Theorem \ref{Thm 11} then also leads to a 
construction of a desired rational family $V$ as of a dense 
open subset of a locally trivial projective fibration over $T$.

\begin{theorem}\label{Thm 11}
In conditions and notation of Theorem \ref{Thm 10}, let $t=(S,C,\xi)\in T$ and $F=F_t$. Then the following statements hold.

(i) $\dim\mathrm{Hom}(F,\calo_S(b))=\binom{b+c-e+3}{3}-
\binom{2b-e+3}{3}+1$.

(ii) the set $P(\mathrm{Hom}(F,\calo_S(b)))^*=\{\mathbf{k}\psi\in
P(\mathrm{Hom}(F,\calo_S(b)))\ |\ \psi:\ F\to\calo_S(b)$ \\ $\mathrm{is\ surjective\ and}\ \ker\psi\ \mathrm{is\ locally\ free}\}$ is nonempty, hence dense open in\\ $P(\mathrm{Hom}(F,\calo_S(b)))$.

(iii) For any point $\mathbf{k}\psi\in
P(\mathrm{Hom}(F,\calo_S(b)))^*$, the sheaf
$$
E_{\psi}:=(\ker(F\twoheadrightarrow\calo_S(b)))(c-b)
$$
is a generalized null correlation bundle, $[E_{\psi}]\in N_{\mathrm{nc}}$.
\end{theorem}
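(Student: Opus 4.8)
The plan is to extract all three statements from the defining extension \eqref{extension t} of $F=F_t$, invoking Theorem \ref{Thm 10} for everything concerning generalized null correlation bundles. The only part with genuine content is the dimension count in (i), which I would carry out by restricting \eqref{extension t} to the surface $S$.

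For (i), recall from Remark \ref{fibre of r}(i) that every curve of $|\calo_S(c-a)|$, in particular $C$, is a complete intersection $C=S\cap S'$ with $S'\in|\op3(c-a)|$. Restricting the Koszul resolution of $\cali_C$ to $S$, the leading map loses its $S$-component, giving $\cali_C|_S\simeq\calo_C(b-c)\oplus\calo_S(a-c)$ with $\calo_S(a-c)=\cali_{C/S}$; and $\Tor_1^{\calo_{\p3}}(\cali_C(e-a),\calo_S)=0$ since $\cali_C$ is torsion-free. Hence \eqref{extension t} restricts to an exact triple
\[
0\to\calo_S(a+b-c)\to F|_S\to\calo_C(b-c+e-a)\oplus\calo_S(e-c)\to0 .
\]
Since $F$ is reflexive (Remark \ref{Serre constr}) its singular locus is finite, so the torsion of $F|_S$ has finite length; but it injects into the last term, which has no finite-length subsheaves, so $F|_S$ is torsion-free. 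Taking the kernel $G$ of the composite $F|_S\to\calo_C(b-c+e-a)\oplus\calo_S(e-c)\to\calo_S(e-c)$, one gets a rank-one torsion-free sheaf on $S$ in an extension $0\to\calo_S(a+b-c)\to G\to\calo_C(b-c+e-a)\to0$, so a first Chern class count gives $G\simeq\cali_{W/S}(b)$ for some finite $W\subset S$, hence $0\to\cali_{W/S}(b)\to F|_S\to\calo_S(e-c)\to0$. Applying $\Hom_{\calo_S}(-,\calo_S(b))$ to this and using $\Hom_{\calo_S}(\calo_S(e-c),\calo_S(b))=H^0(\calo_S(b+c-e))$, $\Hom_{\calo_S}(\cali_{W/S}(b),\calo_S(b))=H^0(\calo_S)=\mathbf{k}$, and $\Ext^1_{\calo_S}(\calo_S(e-c),\calo_S(b))=H^1(\calo_S(b+c-e))=0$ (a smooth surface in $\p3$), I obtain $\Hom(F,\calo_S(b))=\Hom_{\calo_S}(F|_S,\calo_S(b))$ of dimension $h^0(\calo_S(b+c-e))+1$; and $0\to\op3(2b-e)\to\op3(b+c-e)\to\calo_S(b+c-e)\to0$ with $h^1(\op3(2b-e))=0$ yields $h^0(\calo_S(b+c-e))=\binom{b+c-e+3}{3}-\binom{2b-e+3}{3}$, proving (i). Note this argument uses only that $C$ is a smooth complete intersection and $F$ reflexive, so it holds at every point of $\mathbf{T}$, which is what will make the projective fibration $\mathbf{U}\to\mathbf{T}$ of Section \ref{section 6} locally trivial.

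For (ii), nonemptiness of $P(\Hom(F,\calo_S(b)))^*$ at $t\in T$ is exactly the membership condition defining $T$ in Theorem \ref{Thm 10}: there exists an epimorphism $\psi\colon F_t\twoheadrightarrow\calo_S(b)$ with locally free kernel. Openness is formal: over the open locus of $P(\Hom(F,\calo_S(b)))$ where the universal morphism onto $\calo_S(b)$ is surjective, its kernel forms a flat family of sheaves, and the locus where a sheaf in a flat family is locally free is open; intersecting the two open conditions shows $P(\Hom(F,\calo_S(b)))^*$ is open in the irreducible projective space $P(\Hom(F,\calo_S(b)))$, hence dense. For (iii), given $\mathbf{k}\psi\in P(\Hom(F,\calo_S(b)))^*$ the sheaf $E_\psi=(\ker\psi)(c-b)$ is locally free and coincides with the kernel appearing in \eqref{again exact E,F} at $m=0$; the argument of Theorem \ref{Thm 10} then identifies it as a generalized null correlation bundle, $[E_\psi]\in N_{\mathrm{nc}}$.

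The step I expect to demand the most care is the restriction computation in (i): one must be sure that $\cali_C|_S$ really splits off the torsion summand $\calo_C(b-c)$ and that $F|_S$ carries no torsion, so that the auxiliary triple $0\to\cali_{W/S}(b)\to F|_S\to\calo_S(e-c)\to0$ is legitimately available; granting that, the cohomology bookkeeping is routine, and (ii)--(iii) follow formally from Theorem \ref{Thm 10}.
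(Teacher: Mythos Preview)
Your proof is correct. For part (i), your route differs from the paper's: the paper uses the hypothesis $t\in T$ twice, once to obtain $0\to\cali_{Z,S}(b)\to F|_S\to\calo_S(e-c)\to0$ from the epimorphism $\rho\circ\theta$, and once more to obtain a second triple $0\to\calo_S(e-c)\to(F|_S)^{\vee\vee}\to\calo_S(b)\to0$ from the given $\psi:F\twoheadrightarrow\calo_S(b)$; comparing the two forces $(F|_S)^{\vee\vee}\simeq\calo_S(b)\oplus\calo_S(e-c)$ to split, whence the Hom computation. You instead restrict the Koszul complex to $S$ and read off $\cali_C|_S\simeq\calo_C(b-c)\oplus\calo_S(a-c)$ directly, never invoking $\psi$. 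This buys you something: your argument goes through for every $t\in\mathbf{T}$, not just $t\in T$, which is exactly what is needed in Section~\ref{section 6} to conclude that $\mathcal{A}$ is locally free on all of $\mathbf{T}$ (the paper is slightly informal there, stating (i) only on $T$ but applying it on $\mathbf{T}$). The one step you might spell out is the ``first Chern class count'' for $G$: since $\Tor_1^{\op3}(F,\calo_S)=0$ one has $\det(F|_S)=(\det F)|_S=\calo_S(e+b-c)$ by \eqref{det F}, and then $G^{\vee\vee}=\det(F|_S)\otimes\calo_S(c-e)=\calo_S(b)$ as an actual element of $\mathrm{Pic}(S)$, not merely a numerical class. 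Parts (ii) and (iii) you handle exactly as the paper does, by invoking Theorem~\ref{Thm 10}.
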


\begin{proof}
(i) Note that the natural epimorphism 
$\rho:\cali_C(e-a)\twoheadrightarrow\cali_{C,S}(e-a)\simeq
\calo_S(-C)(e-a)\simeq\calo_S(e-c)$ composed with the
epimorphism $\theta:F\ \twoheadrightarrow\cali_C(e-a)$ from the
triple (\ref{new extension t}) for $m=b-c$ gives an epimorphism
$$
\rho\circ\theta:F\ \twoheadrightarrow\calo_S(e-c).  
$$ 
Restricting it onto $S$ yields an exact triple:
$0\to\cali_{Z,S}(b)\to F|_S\to\calo_S(e-c)\to0$.
This triple together with the triple 
$0\to\cali_{Z,S}(b)\to\calo_S(b)\to\calo_Z\to0$
by push-out yield two exact triples:
\begin{equation}\label{1st triple FS}
0\to\calo_S(b)\overset{u}{\to}(F|_S)^{\vee\vee}\to\calo_S(e-c)\to0,
\end{equation}
\begin{equation}\label{F to Fdd}
0\to F|_S\to(F|_S)^{\vee\vee}\to\calo_Z\to0,
\end{equation}
where $(F|_S)^{\vee\vee}=\calh om_{\calo_S}(\calh om_{\calo_S}(F|_S,\calo_S),\calo_S)$. On the other hand, restricting onto $S$ the epimorphism 
$\psi:\ F\twoheadrightarrow\calo_S(b)$ from the triple
(\ref{again exact E,F}) with $m=b-c$ we obtain an exact triple
$0\to\cali_{Z,S}(e-c)\to F|_S\to\calo_S(b)\to0$. As above, by push-out this triple yields an exact triple
\begin{equation}\label{2nd triple FS}
0\to\calo_S(e-c)\to (F|_S)^{\vee\vee}\overset{v}{\to}
\calo_S(b)\to0.
\end{equation}
Now consider the morphisms $u$ and $v$ in the triples
(\ref{1st triple FS}) and (\ref{2nd triple FS}). If their
composition $v\circ u:\calo_S(b)\to\calo_S(b)$ iz zero, this implies that there exists a nonzero morphism $\calo_S(b)\to\calo_S(e-c)$, contrary to the condition that
$e-c-b<0$. Hence  $v\circ u:\calo_S(b)\to\calo_S(b)$ is an isomorphism. This means that both triples (\ref{1st triple FS})   and (\ref{2nd triple FS}) split. Thus
\begin{equation}\label{split}
(F|_S)^{\vee\vee}\simeq\calo_S(b)\oplus\calo_S(e-c).
\end{equation}
Remark that, since $\dim Z=0$, it follows that $\mathrm{Hom}(\calo_Z,\calo_S(b))=\mathrm{Ext}^1(\calo_Z,
\calo_S(b))=0$, the triple (\ref{F to Fdd}) yields the isomorphisms 
$\mathrm{Hom}(F,\calo_S(b))\simeq\mathrm{Hom}(F|_S,
\calo_S(b))\simeq\mathrm{Hom}((F|_S)^{\vee\vee},
\calo_S(b))$. This together with (\ref{split}) shows that
$$
\mathrm{Hom}(F,\calo_S(b))=H^0(\calo_S)\oplus H^0(\calo_S(b+c-e)).
$$
Whence, (i) follows.

Statements (ii) and (iii) are immediate consequences of Theorem \ref{Thm 10}.
\end{proof}

Now return to the family $\underline{\mathbf{F}}$ of reflexive sheaves on $\p3\times\mathbf{T}$, and recall that $\mathbf{T}$ is a rational variety (see (\ref{T})) 
with the projection $r\circ\mu:\mathbf{T}\to\mathbf{P}$. Let 
$\underline{\Gamma}:=\Gamma\times_{\mathbf{P}}\mathbf{T}\subset\p3\times\mathbf{T}$ be the family of surfaces in $\p3$ with base
$\mathbf{T}$, together with the natural projection 
$\underline{\Gamma}\to\mathbf{T}$, the fibre of which over an
arbitrary point $t=(S,C,\xi)\in\mathbf{T}$ is a surface $S$.
Consider an $\calo_{\mathbf{T}}$-sheaf 
$$
\mathcal{A}=\cale xt^0_{\mathrm{pr_2}}(\underline{\mathbf{F}},
\calo_{\underline{\Gamma}}(b)),
$$
where $\mathrm{pr_2}:\p3\times\mathbf{T}\to\mathbf{T}$ is the projection. The base change and Theorem \ref{Thm 11}(i) show that
\begin{equation}\label{A times k(t)}
\mathcal{A}\otimes\mathbf{k}(t)=\mathrm{Hom}
(F_t,\calo_S(b)),\ \ \ F_t=\underline{\mathbf{F}}|_{\p3\times\{t\}},
\end{equation}
and $\mathcal{A}$ is a locally free $\calo_{\mathbf{T}}$-sheaf of
rank
\begin{equation}\label{rkA}
\rk\mathcal{A}=\binom{b+c-e+3}{3}-\binom{2b-e+3}{3}+1.
\end{equation}
Since $\mathbf{T}$ is a rational variety, the scheme 
\begin{equation}\label{bf B}
\mathbf{U}:=\mathbb{P}(\mathcal{A}^{\vee})
\xrightarrow{\boldsymbol{\lambda}}\mathbf{T}
\end{equation}
is a rational variety and its structure morphism
${\boldsymbol{\lambda}}:\mathbf{U}\to\mathbf{T}$
is a locally trivial projective fibration with fibre of dimension $\rk{\mathcal{A}}-1$. Thus by (\ref{dim T}) and (\ref{rkA}):
\begin{equation}\label{dim B}
\begin{split}
& \dim\mathbf{U}=\binom{c-b+3}{3}+\binom{c-a+3}{3}-
\binom{b-a+3}{3}+\binom{c+a-e+3}{3}\\
&-\binom{a+b-e+3}{3}-\binom{2a-e+3}{3}+\binom{b+c-e+3}{3}-
\binom{2b-e+3}{3}+\delta(e,a,b,c)-3.
\end{split}
\end{equation}
In view of (\ref{A times k(t)}) we have the set-theoretic description of $\mathbf{U}$ as:
\begin{equation}\label{descr of bf B}
\mathbf{U}=\{(t,\mathbf{k}{\psi})\ |\ t=(S,C,\xi)\in\mathbf{T},\ \mathbf{k}\psi\in P(\mathrm{Hom}(F_t,\calo_S(b)))\}.
\end{equation}
On $\mathbf{U}$ there is a tautological subbundle morphism 
\begin{equation*}
j:\ \calo_{\mathbf{U}}\to\mathbf{A}
\otimes\calo_{\mathbf{U}}(1), 
\end{equation*}
where $\calo_{\mathbf{U}}(1)$ is the Grothendieck sheaf and $\mathbf{A}:=\boldsymbol{\lambda}^*\mathcal{A}$. From Theorem \ref{Thm 11}(ii) it follows that 
\begin{equation}\label{B*}
U=\{(t,\mathbf{k}\psi)\in\mathbf{U}\ |\ \mathbf{k}\psi\in P(\mathrm{Hom}(F_t,\calo_S(b)))^*\}
\end{equation}
is a nonempty open (hence dense) subset of $\mathbf{U}$.
Since $\boldsymbol{\lambda}:\mathbf{U}\to\mathbf{T}$ is a
projective fibration, it is flat. Hence by the openness of flat morphisms (see, e. g., \cite[Ch. III, Exc. 9.1]{H})  
the set $T=\lambda(U)$ is a nonempty open (hence dense) subset of $\mathbf{T}$. We now set
\begin{equation}\label{notation for B}
\begin{split}
&  
\Gamma_{\mathbf{U}}:=\underline{\Gamma}\times_{\mathbf{T}}\mathbf{U},\ \ \ \ 
\Gamma_U:=\Gamma_{\mathbf{U}}\times_{\mathbf{U}}U=
\underline{\Gamma}\times_TU,\\ 
& \ \ \ \ \ A:=\mathbf{A}_U, \ \ \ \ \ \calo_B(1):=(\calo_{\mathbf{U}}(1))_U,
\ \ \ \ \lambda:=\boldsymbol{\lambda}|_U,
\end{split}
\end{equation}
and let 
\begin{equation*}
\mathrm{can}:\underline{\mathbf{F}}_{\mathbf{U}}\otimes\op3\boxtimes\mathbf{A}\to\calo_{\Gamma_{\mathbf{U}}}(b)
\end{equation*}
be the canonical evaluation morphism. Consider the universal
morphism
\begin{equation}\label{tilde Psi}
\mathbf{\Psi}:\ \underline{\mathbf{F}}_{\mathbf{U}}\to
\calo_{\Gamma_{\mathbf{U}}}(b)\otimes\calo_{\p3}\boxtimes\calo_{\mathbf{U}}(1)
\end{equation}
defined as the composition
\begin{equation*}
\mathbf{\Psi}:\ \underline{\mathbf{F}}_{\mathbf{U}}
\xrightarrow{\mathrm{id}\otimes j}\underline{\mathbf{F}}_{\mathbf{U}}
\otimes\op3\boxtimes(A\otimes\calo_{\mathbf{U}}(1))
\xrightarrow{\mathrm{can}\otimes\mathrm{id}}
\calo_{\Gamma_{\mathbf{U}}}(b)\otimes
\calo_{\p3}\boxtimes\calo_{\mathbf{U}}(1).
\end{equation*}
By Theorem \ref{Thm 11}(iii),
\begin{equation}\label{B=}
\begin{split}
& U=\{\mathbf{u}=(t,\mathbf{k}\psi)\in\mathbf{U}\ |\ t=(S,C,\xi)\in T,\ \ \mathbf{\Psi}|_{\p3\times\{\mathbf{u}\}}:
\ \underline{\mathbf{F}}\otimes\mathbf{k}(t)\to\calo_S(b)\\ 
& is\ surjective\ and\ [\ker\boldsymbol{\Psi}(c-b)|_{\p3\times\{\mathbf{u}\}}]\in N_{\mathrm{nc}}\}
\end{split}
\end{equation}
is a dense open subset of $\mathbf{U}$, and we obtain a 
well-defined morphism
\begin{equation}\label{morphism Phi}
q:\ U\to N_{\mathrm{nc}},\ \ \ \mathbf{u}\mapsto
[\ker\boldsymbol{\Psi}(c-b)|_{\p3\times\{\mathbf{u}\}}].
\end{equation}
Set
\begin{equation}\label{underline E}
V:=q^{-1}(N),\ \ \ \ \ \ 
\underline{\mathbf{E}}:=(\ker \boldsymbol{\Psi})(c-b)|\p3
\times V.
\end{equation}

\begin{remark}\label{global triple E,F}
(i) Note that $V$ is nonempty. Indeed, for any $([E],S,C)\in W$ the point $f([E],S,C)$ defined in \eqref{set-theoretic def of f} belongs to $V$.\\
(ii) Since $N$ is a dense open subset of 
$N_{\mathrm{nc}}$, it follows that $V$ is a dense 
open subset of $U$, hence also of $\mathbf{U}$, i. e. $V$ 
is a rational variety of dimension given by formula 
\eqref{dim B}. In addition, comparing \eqref{dim B} with 
\eqref{dim N 1} we obtain
\begin{equation}\label{dim B=}
\dim V=\dim N+\delta(e,a,b,c)+t(e,a,b), 
\end{equation}
and from \eqref{dim X} and \eqref{dim B=} it follows that
\begin{equation}\label{dim X-dim B}
\dim X-\dim V=\tau-\delta(e,a,b,c)-t(e,a,b).
\end{equation}
(iii) Clearly, from \eqref{underline E} follows the exact triple \eqref{tilde E from tilde F}. 
\end{remark}

\vspace{5mm}

\section{Relation between $\underline{\mathbf{E}}$ and $\mathbf{E}$. Proof of the main theorem} \label{section 7}
 
\vspace{2mm}

We are now ready to prove the main result of the paper, Theorem
\ref{main Thm}, which follows from the relation between the families $\underline{\mathbf{E}}$ and $\mathbf{E}$. (The exact form of this relation is the isomorphism (\ref{E from tilde E}).) 

\begin{theorem}\label{main Thm}
(i) There is an isomorphism of varieties
\begin{equation}\label{main isom}
f: W\overset{\simeq}{\to}V.
\end{equation} 
(ii) For $e,a,b,c$ with $e\in\{-1,0\}$ and $b\ge a\ge0,\ c>a+b$, the variety $N$, hence also the variety $\overline{N}=\overline{N}(e,a,b,c)$ is at  least stably 
rational. Furthermore, on $\p3\times W$ there 
exists a family of generalized null correlation bundles 
$\underline{\mathbf{E}}_{W}$ for which the corresponding 
modular morphism $W\to N$ coincides with 
$\theta\circ\pi$ in diagram \eqref{diag for X,W}.

(iii) Assume $(e,a)\ne(0,0),\ c>2a+b-e$, and $b>a$. Then $\tau=0$, $\overline{N}$ is a rational variety, and its open dense subset $N\simeq X\simeq W$ is a fine moduli space, i.e. the $\calo_{\p3\times N}$-sheaf $\underline{\mathbf{E}}_{W}$ is a universal family of generalized null correlation bundles over $N$.
\end{theorem}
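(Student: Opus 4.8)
The plan is to prove (i) by upgrading the set-theoretic maps $f$ and $h$ of \eqref{set-theoretic def of f}--\eqref{set-theoretic def of h} to mutually inverse morphisms, and then to read off (ii) and (iii) from a dimension count. First I would build a $G$-invariant morphism $\mathbf{f}_V: \mathbf{W}\to V$. On $\p3\times\mathbf{W}$ the triple \eqref{global C} carries a flat family of complete intersection curves $\boldsymbol{\mathcal{C}}$ whose classifying map factors through $\mathrm{R}$ by Theorem~\ref{Thm 3}(ii), and the extension class of \eqref{global C}, read through the base-change property of the sheaf $\cale_1$ of Remark~\ref{sheaves cale i}, lifts it to a morphism $\mathbf{W}\to\mathbf{T}=\mathbb{P}(\cale_1^{\vee})$. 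Uniqueness of Serre-type extensions then identifies the pullback of $\underline{\mathbf{F}}$ along $\mathbf{W}\to\mathbf{T}$ with $\mathbf{F}_{\mathbf{W}}$ up to a twist by a line bundle from $\mathbf{W}$, so the epimorphism $\boldsymbol{\psi}_{\mathbf{W}}$ of \eqref{psi W} lifts the morphism further to $\mathbf{W}\to\mathbf{U}=\mathbb{P}(\mathcal{A}^{\vee})$; it lands in $U$ by Theorem~\ref{Thm 11}(ii) and, since $\ker(\boldsymbol{\psi}_{\mathbf{W}})(c-b)\cong\mathbf{E}_{\mathbf{W}}$ has all fibres in $N$, in $V$. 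Because $V$ and the whole tower \eqref{diag for T,B} are built from $\mathrm{P}$, Hilbert schemes and projectivized $\Ext$/$\Hom$ bundles, with no reference to the Quot-scheme, $\mathbf{f}_V$ is $G$-invariant and descends through the geometric quotient $\tilde{\Phi}: \mathbf{W}\to W$ to $f: W\to V$, which on closed points is \eqref{set-theoretic def of f}. Symmetrically, the family $\underline{\mathbf{E}}$ on $\p3\times V$ with modular morphism $q|_V$ from \eqref{morphism Phi}--\eqref{underline E}, the family of surfaces carried by $\Gamma_V$, and the section of $\underline{\mathbf{F}}_V(c-a-b)$ determined by $\xi$ together produce $h: V\to W$ as in \eqref{set-theoretic def of h}, whose defining conditions are ensured by Theorems~\ref{Thm 10} and \ref{Thm 11}. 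To see that $f$ and $h$ are scheme morphisms and mutually inverse, I would lift them to $\mathbf{f}: \mathbf{W}\to\mathbf{V}$ and $\mathbf{h}: \mathbf{V}\to\mathbf{W}$ over suitable principal $G$-bundles ($\mathbf{V}$ being the frame bundle of a normalization of the pushforward of $\underline{\mathbf{E}}$) and invoke the universal properties of the Quot-scheme, the Hilbert schemes and the projectivized extension/hom bundles: they force $\mathbf{h}\circ\mathbf{f}$ and $\mathbf{f}\circ\mathbf{h}$ to carry all defining data back to themselves, hence to be identities, and passing to $G$-quotients gives $h\circ f=\mathrm{id}_W$, $f\circ h=\mathrm{id}_V$. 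This proves \eqref{main isom}. The decisive input keeping $\mathbf{f}_V$ and $h$ inside the claimed targets is Theorem~\ref{Thm 10}, which guarantees that $\ker\psi$, twisted by $\calo_{\p3}(c-b)$, is genuinely a generalized null correlation bundle.

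For (ii), the variety $\mathbf{T}=\mathbb{P}(\cale_1^{\vee})$ is the top of the tower $\mathbf{T}\xrightarrow{\boldsymbol{\mu}}\mathrm{R}\xrightarrow{\mathrm{r}}\mathrm{P}$, a locally trivial projective fibration over an open subfibration of a locally trivial projective fibration over the rational variety $\mathrm{P}$; hence $\mathbf{T}$, and therefore $\mathbf{U}=\mathbb{P}(\mathcal{A}^{\vee})$ and its dense open subset $V$, are rational, and $W\cong V$ is rational by (i). By Corollary~\ref{cor 5.3}(i) and Theorem~\ref{Thm 2}, $W\xrightarrow{\pi}X$ and $X\xrightarrow{\theta}N$ are open subfibrations of locally trivial $\mathbb{P}^{m}$- and $\mathbb{P}^{\tau}$-fibrations, so $W$ is birational to $N\times\mathbb{P}^{m+\tau}$; combining with $W\cong V$, the variety $N\times\mathbb{P}^{m+\tau}$ is rational, i.e.\ $N$, hence $\overline{N}$ (of which $N$ is a dense open subset), is stably rational. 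Finally, $\underline{\mathbf{E}}_W:=(\mathrm{id}_{\p3}\times f)^{*}\underline{\mathbf{E}}$ on $\p3\times W$ is a family of generalized null correlation bundles whose modular morphism $W\to N$ is $(q|_V)\circ f$; on closed points it sends $([E],S,C)$ to $[(\ker\psi)(c-b)]=[E]=\theta(\pi([E],S,C))$, so it equals $\theta\circ\pi$ since $W$ is reduced (Remark~\ref{W reduced}).

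For (iii), assume $(e,a)\ne(0,0)$, $c>2a+b-e$ and $b>a$. Then \eqref{h0(F...)=1} gives $h^0(F(c-a-b))=1$, so $m=0$ in \eqref{formula for m} and $\pi: W\xrightarrow{\simeq}X$ by \eqref{isom pi}; moreover $c>2a+b-e$ forces $\delta(e,a,b,c)=0$ in \eqref{delta}, and a short case check in \eqref{t(e)} using $b>a$ and $(e,a)\ne(0,0)$ gives $t(e,a,b)=0$. Hence \eqref{dim X-dim B} reads $\dim X-\dim V=\tau$, while $X\cong W\cong V$ (by $\pi$ and (i)) forces $\dim X=\dim V$, so $\tau=0$. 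With $\tau=0$ the fibres of $\theta: X\to N$ are single points, lying in $\mathrm{P}$ by \eqref{dense open P}, so $\theta: X\xrightarrow{\simeq}N$; thus $N\cong X\cong W\cong V$ is rational, and so is $\overline{N}$. For fineness, the key point is that $\underline{\mathbf{E}}$ on $\p3\times V\cong\p3\times N$ is built purely from the universal Serre extension $\underline{\mathbf{F}}$ on $\p3\times\mathbf{T}$ and the morphism $\boldsymbol{\Psi}$ of \eqref{tilde Psi}, with no GIT quotient entering, and its classifying morphism to the coarse space $N$ is the identity. Pulling back along the principal $G$-bundle $\varphi: Y\to N$ and using the identification of families from (i) gives $\varphi^{*}\underline{\mathbf{E}}_W\cong\mathbb{E}_Y\otimes p_Y^{*}\mathcal{L}$ for a line bundle $\mathcal{L}$ on $Y$ with a $GL(N_m)$-linearization on which the centre acts through the scalar character; this is exactly a descent datum for $\mathbb{E}_Y$, its descent is $\underline{\mathbf{E}}_W$, and the standard GIT argument then shows $\underline{\mathbf{E}}_W$ represents the moduli functor, so $N\simeq X\simeq W$ is a fine moduli space. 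In particular, for $e=0$ and $n=c^2-a^2-b^2$ even this yields fine moduli components outside the usual range ``$n$ odd''.

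The main obstacle is not the dimension bookkeeping of (iii) but the rigidification in (i): realizing $\mathbf{f}_V$, $\mathbf{h}$ and their lifts as honest morphisms of schemes so that the Quot-presentations, the Hilbert-scheme points, the extension classes in $\mathbb{P}(\cale_1^{\vee})$ and the epimorphism classes in $\mathbb{P}(\mathcal{A}^{\vee})$ all match functorially, while keeping track of the auxiliary line-bundle twists coming from the various bases and of the $GL(N_m)$-linearizations, and verifying that everything descends through the geometric quotients; Theorem~\ref{Thm 10} is what makes the target sets correct in the first place.
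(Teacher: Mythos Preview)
Your proposal is correct and follows essentially the same route as the paper: construct $\mathbf{f}_V:\mathbf{W}\to V$ via the universal properties of the Hilbert scheme, the universal-extension space $\mathbf{T}=\mathbb{P}(\cale_1^{\vee})$ (using the base-change property of $\cale_1$ and the reducedness of $\mathbf{W}$), and the projectivized $\Hom$-bundle $\mathbf{U}$; descend to $f$ through the categorical quotient $\tilde{\Phi}$; build the principal $G$-bundle $\mathbf{V}\to V$ as the frame bundle of $pr_{2*}\underline{\mathbf{E}}(m)$; and verify $\mathbf{h}\circ\mathbf{f}=\mathrm{id}$, $\mathbf{f}\circ\mathbf{h}=\mathrm{id}$ via the universal property of the relative Quot-scheme. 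The only minor deviations are in (iii): the paper records the identity $\tau=\delta(e,a,b,c)+t(e,a,b)-m(e,a,b,c)$ directly (combining \eqref{dim W tau} with \eqref{dim X-dim B}) rather than first using $m=0$ to get $X\cong W\cong V$, and for fineness it simply notes that $\underline{\mathbf{E}}_W$ on $\p3\times N$ (via $N\simeq W$) has modular morphism the identity, without your extra descent discussion through $\varphi:Y\to N$.
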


\begin{proof}
(i) The desired map $f:W\to V$ was set-theoretically defined 
in \eqref{set-theoretic def of f}. We have to show that this
is the underlying map of a certain morphism. We first 
construct a $PGL(N_m)$-invariant morphism
\begin{equation}\label{bf f}
\mathbf{f}_V:\ \mathbf{W}\to V.
\end{equation}

For this, consider the triple (\ref{global C}) and remark that
the subscheme $\boldsymbol{\mathcal{C}}$ in this triple is a family with base $\mathbf{W}$ of complete intersection curves from $\mathrm{R}$ (see (\ref{bfR=})). Thus by the universality of the Hilbert scheme there exists a morphism 
$\mathbf{f}_0:\ \mathbf{W}\to\mathrm{R}$ such that $\boldsymbol{\mathcal{C}}=\Sigma\times_{\mathrm{R}}\mathbf{W}$. 
Hence, 
$$
\cali_{\boldsymbol{\mathcal{C}},\p3\times\mathbf{W}}
(c-2a-b+e)\simeq(\mathrm{id}_{\p3}\times\mathbf{f}_0)^*
\cali_{\Sigma,\p3\times\mathrm{R}}(c-2a-b+e).
$$ 
Now consider the triples (\ref{global C}) and 
(\ref{univ FT}) as families of extensions of $\op3$-sheaves with bases $\mathbf{W}$ and $\mathbf{T}$, respectively. Use Remark \ref{sheaves cale i} and the fact that $\mathbf{W}$ is reduced (see Remark \ref{W reduced}) to apply the universal property of the scheme $\mathbf{T}$ (see \cite[Cor. 4.4]{L}). By this universal property there is a uniquely defined morphism 
$\mathbf{f}_1:\ \mathbf{W}\to\mathbf{T}$ such that 
$\mathbf{f}_0=\boldsymbol{\mu}\circ\mathbf{f}_1$ and such that the triple (\ref{global C}) is obtained by applying the functor 
$(\mathrm{id}_{\p3}\times\mathbf{f}_1)^*$ to the triple
(\ref{univ FT}). In particular,
\begin{equation}\label{F from FX}
\mathbf{F}_{\mathbf{W}}\simeq
\underline{\mathbf{F}}_{\mathbf{W}}.
\end{equation}
By (\ref{F from FX}) and the universal property of the scheme $\mathbf{U}$ over $\mathbf{T}$ there is a unique morphism $\mathbf{f}_V:\ \mathbf{W}\to\mathbf{V}$ such that
$\mathbf{f}_1=\boldsymbol{\lambda}\circ\mathbf{f}_V$ and such that the epimorhism 
$\boldsymbol{\psi}_{\mathbf{W}}:\ 
\mathbf{F}_{\mathbf{W}}
\twoheadrightarrow(\mathbf{L}_{\rho}^{\vee}(b,1))
_{\mathbf{W}}$
in (\ref{psi W}) is obtained from the universal morphism
$\mathbf{\Psi}$ in (\ref{tilde Psi}) by aplying the functor
$(\mathrm{id}_{\p3}\times\mathbf{f}_V)^*$. As $\boldsymbol{\psi}_{\mathbf{W}}$
is surjective, from the description \eqref{underline E} of $V$ it follows that
\begin{equation*}
\mathbf{f}_V(\mathbf{W})\subset V
\end{equation*}
and $\mathbf{E}_{\mathbf{W}}=\ker\boldsymbol{\psi}_{\mathbf{W}}$ 
is a family of locally free sheaves on $\p3$.
Moreover, (\ref{E from F}), \eqref{underline E} and (\ref{F from FX}) yield
\begin{equation}\label{E from tilde E}
\mathbf{E}_{\mathbf{W}}\simeq
\underline{\mathbf{E}}_{\mathbf{W}}.
\end{equation}
Furthermore, as the $PGL(N_m)$-principal bundle 
$\tilde{\Phi}:\ \mathbf{W}\to W$ is a categorical factor, and the morphism $\mathbf{f}_V:\mathbf{W}\to V$ by construction is $PGL(N_m)$-invariant, it follows that there exists a morphism
\begin{equation*}
f:\ W\to V
\end{equation*}
such that $\mathbf{f}_V=f\circ\tilde{\Phi}$.
Clearly, $f$ is pointwise just the map $([E],S,C)\mapsto(S,
C,\xi,\mathbf{k}\psi)$ given in \eqref{set-theoretic def of f}.

We have to show that $f$ is an isomorphism. For this, remark 
that the sheaf $\mathbf{D}=pr_{2*}\underline{\mathbf{E}}(m)$, 
where $pr_2:\p3\times V\to V$ is the projection, is a 
locally free $\calo_{V}$-sheaf of rank $N_m$, and the 
evaluation morphism 
$ev:pr_2^*\mathbf{D}\to\underline{\mathbf{E}}(m)$ 
is surjective (see Section \ref{section 2}). Now consider a locally free $\calo_{V}$-sheaf $\mathcal{K}=\mathcal{H}om(\mathbf{k}^{N_m}
\otimes\calo_{V},\mathbf{D})$ and the corresponding scheme 
$\mathbb{V}(\mathcal{K}^{\dual})=\mathrm{Spec}
(\mathrm{Sym}^{\cdot}\mathcal{K}^{\dual})$. There is an open
dense subset $\mathbf{Y}=\mathrm{Isom}(\mathbf{k}^{N_m}
\otimes\calo_{V},\mathbf{D})$ of 
$\mathbb{V}(\mathcal{K}^{\dual})$ consisting
of (fibrewise) invertible homomorphisms from
$\mathbf{k}^{N_m}\otimes\calo_{V}$ to $\mathbf{D}$,
together with the projection $v:\ \mathbf{Y}\to V$ 
and the canonical isomorphism
$\mathrm{can}:\mathbf{k}^{N_m}\otimes\calo_{\p3\times
\mathbf{Y}}
\overset{\simeq}{\to}(\mathrm{id}_{\p3}\times v)^*\mathbf{D}$.
This isomorphism, being twisted by 
$\op3(-m)\boxtimes\calo_{\mathbf{Y}}$,
together with the above epimorphism $ev$ yields an epimorphism
$$
\boldsymbol{\mathcal{B}}\boxtimes\calo_{\mathbf{Y}}\overset{\mathrm{can}}
{\to}(\mathrm{id}_{\p3}\times v)^*\mathbf{D}(-m)
\overset{ev}{\twoheadrightarrow}
\underline{\mathbf{E}}_{\mathbf{Y}},
$$ 
where $\boldsymbol{\mathcal{B}}=\mathbf{k}^{N_m}\otimes\calo_{\p3}
(-m)$ (see Section \ref{section 2}). Thus, by the universal 
property of the open subset $Y$ of the Quot-scheme  
$Q=\mathrm{Quot}_{\p3}(\boldsymbol{\mathcal{B}},P)$ 
introduced in Theorem \ref{Thm 2}(ii), there exists a 
uniquely defined morphism
$\tilde{\mathbf{q}}:\ \mathbf{Y}\to Q$ such that
\begin{equation}\label{EV=}
\underline{\mathbf{E}}_{\mathbf{Y}}\simeq
\mathbb{E}_{\mathbf{Y}},
\end{equation}
where $\mathbb{E}$ is the universal quotient sheaf on 
$\p3\times Q$.
Note that, by (\ref{morphism Phi}),
\begin{equation}
\varphi\circ\tilde{\mathbf{q}}=q\circ v,
\end{equation}
where $\varphi:Y\to N$ is a principal $PGL(N_m)$-bundle (\ref{varphi}).
In particular,
$$
\tilde{\mathbf{q}}(\mathbf{V})\subset Y.
$$

Next, the group $\mathbf{k}^*$ naturally acts on $\mathbf{V}$ by homotheties, so that
\begin{equation}\label{var bf V}
\mathbf{V}:=\mathbf{Y}//\mathbf{k}^*
\end{equation}
is a categorical quotient. Therefore, $v$ as a principal $GL(N_m)$-bundle decomposes as 
$v=\mathbf{\Phi}\circ\nu,$
where $\nu:\mathbf{Y}\to\mathbf{V}$ is a principal $\mathbf{k}^*$-bundle and 
\begin{equation}\label{bf Phi}
\mathbf{\Phi}:\ \mathbf{V}\to V
\end{equation}
is a principal $PGL(N_m)$-bundle.
Since the morphism $\tilde{\mathbf{q}}$ is $\mathbf{k}^*$-invariant it decomposes as 
$$
\tilde{\mathbf{q}}=\mathbf{q}\circ\nu,
$$
where 
\begin{equation}\label{bf q}
\mathbf{q}:\mathbf{V}\to Y 
\end{equation}
is a $PGL(N_m)$-equivarint morphism. Thus, as the principal $PGL(N_m)$-bundles $\mathbf{\Phi}:\ \mathbf{V}\to V$ and $\varphi:\ Y\to N$ are categorical quotients, there exists a morphism $q:\ V\to N$ making the diagram
\begin{equation}\label{diag q}
\xymatrix{ 
Y\ar[d]_{\varphi} & \mathbf{V} \ar[l]_{\mathbf{q}} \ar[d]^{\mathbf{\Phi}} \\
N & V\ar[l]_-{q}}
\end{equation}
cartesian. In addition, similar to (\ref{EV=}) we see that the sheaf $\underline{\mathbf{E}}_{\mathbf{V}}$ satisfies the relation
\begin{equation}\label{EY=}
\underline{\mathbf{E}}_{\mathbf{V}}\simeq
\mathbb{E}_{\mathbf{V}}.
\end{equation}
Note that $\mathbf{V}$ is irreducible, since $V$ is irreducible.

We now construct the morphism 
\begin{equation}\label{mor bf f}
\mathbf{f}:\ \mathbf{W}\to\mathbf{V}
\end{equation}
making the square in the diagram
\begin{equation}\label{diag f,fB}
\xymatrix{ 
\mathbf{W} \ar[d]_{\tilde{\Phi}}\ar[r]^{\mathbf{f}}
\ar[dr]^-{\mathbf{f}_B} & \mathbf{V}\ar[d]^{\boldsymbol{\Phi}} \\
W\ar[r]^-{f} & V}
\end{equation}
cartesian. For this, note that by the universal property of the
Quot-scheme $Q$ the family of generalized null correlation bundles $\mathbf{E}_{\mathbf{W}}$ on $\p3\times\mathbf{W}$ 
defines a morphism 
\begin{equation}\label{eta}
\eta:\ \mathbf{W}\to Q
\end{equation}
such that, by definition, 
$$
\eta(\mathbf{W})\subset Y,
$$ 
and the diagram
\begin{equation}\label{diag xi}
\xymatrix{ 
\mathbf{W} \ar[d]_{\tilde{\Phi}}\ar[r]^{\eta}
 & Y \ar[d]^{\varphi} \\
W\ar[r]^-{q\circ f} & N}
\end{equation}
is cartesian. From the cartesian diagrams (\ref{diag q}) and (\ref{diag xi}) by transitivity of fibred products follows the existence of the desired morphism $\mathbf{f}$ satisfying (\ref{diag f,fB}).

Now consider the composition 
$V\overset{\lambda}{\to}T\overset{\mu}{\to}
\mathrm{R}\overset{r}{\to}\mathrm{P}$ 
of natural morphisms in diagram \eqref{diag for T,B}, and the induced graph of incidence $\Gamma_{B}$
(see \eqref{notation for B}).
Let $\gamma:\ \Gamma_{V}:=\Gamma_{U}\times_UV\to V$ be the projection and set
$$
\mathbf{B}:=(R^2\gamma_*(\underline{\mathbf{E}}
(c-e-4)|_{\Gamma_{V}}))^{\dual}.
$$
A standard base change and the Serre duality (cf. (\ref{h2=h0})) show that $\mathbf{B}$ is
a line bundle on $V$ with a fibre over an arbitrary point
$\mathbf{u}=(S,C,\xi,\mathbf{k}\psi)\in V$ (we use the notaion
from (\ref{B=})) given by
$$
\mathbf{B}\otimes\mathbf{k}(\mathbf{u})=H^0(E(-b)|_S), \ \ \ 
$$
where $E=\underline{\mathbf{E}}|_{\p3\times\{\mathbf{u}\}}$.
Comparing this with (\ref{fibre of G}) and (\ref{U onto G}) and using
(\ref{EY=}) we obtain an epimorphism
$\mathbf{q}^*\mathbb{U}\twoheadrightarrow\mathbf{B}$.
Now by the universal property of 
$\boldsymbol{\mathcal{X}}=\mathbb{P}(\mathbb{U})
\overset{\boldsymbol{\theta}}{\to}\mathcal{Y}$ defined in 
\eqref{bf scr X}
(see, e. g., \cite[Ch. II, Prop. 7.12]{H}) there is a morphism 
$$
\mathbf{g}:\ \mathbf{Y}\to\boldsymbol{\mathcal{X}}
$$
such that $\mathbf{q}=\boldsymbol{\theta}\circ\mathbf{g}$
and $\mathbf{B}\simeq\mathbf{g}^*\calo_{\mathbb{P}(\mathbb{U})}(1)$.
Therefore, in view of (\ref{EY=}) we have 
\begin{equation}\label{EY underline vs EY}
\underline{\mathbf{E}}_{\mathbf{V}}\simeq
(\mathrm{id}_{\p3}\times\mathbf{g})^*\mathbf{E}=
\mathbf{E}_{\mathbf{V}}.
\end{equation}
In addition, since $\mathbf{q}(\mathbf{V})
\subset Y$ by \eqref{bf q}, it follows from diagram \eqref{tilde Y,tilde X} that
\begin{equation*}
\mathbf{g}(\mathbf{Y})\subset\mathbf{X}.
\end{equation*} 
Futhermore, the morphism $\mathbf{g}:\ \mathbf{Y}\to\mathbf{X}$ is an equivariant morphism of principal $PGL(N_m)$-bundles
$\mathbf{\Phi}:\ \mathbf{Y}\to V$ and 
$\Phi:\ \mathbf{X}\to X$. Hence there exists a morphism
$$
g:\ V\to X
$$ 
making the diagram
\begin{equation}\label{g,Phi}
\xymatrix{\mathbf{X}\ar[d]_{\Phi} & \mathbf{V} \ar[l]_{\mathbf{g}} \ar[d]^{\mathbf{\Phi}} \\
X & V\ar[l]_-{g}}
\end{equation}
cartesian. 

We now proceed to constructing the inverse to $f$ morphism
\begin{equation}\label{morphism h}
h:\ V\to W.
\end{equation}
For this, we will first construct the morphism
$$
\mathbf{h}:\ \mathbf{V}\to\mathbf{W}
$$
such that 
\begin{equation}\label{two compns}
\boldsymbol{\pi}\circ\mathbf{h}=\mathbf{g},\ 
\ \ \ \pi\circ h=g\ \ \ \textrm{and}\ \ \ 
\tilde{\Phi}\circ\mathbf{h}=h\circ\Phi,
\end{equation} 
where $\tilde{\Phi}:\mathbf{W}\to W$ is a principal $PGL(N_m)$-bundle in the diagram (\ref{diag W}),
and $\boldsymbol{\pi}$, respectively, $\pi$ are the projections given in that diagram.
Remark that, since the sheaf
$\mathbf{F}_{\mathbf{V}}$ (respectively, the sheaf 
$\underline{\mathbf{F}}_{\mathbf{V}}$) is determined by the sheaf $\mathbf{E}_{\mathbf{V}}$ (respectively, by the sheaf
$\underline{\mathbf{E}}_{\mathbf{V}}$) uniquely up to an isomorphism (see Remark \ref{reduction step}(iii)), the isomorphism (\ref{EY underline vs EY}) implies an isomorphism
\begin{equation*}
\underline{\mathbf{F}}_{\mathbf{V}}\simeq
\mathbf{F}_{\mathbf{V}}.
\end{equation*}
Using this isomorphism, rewrite the left morphism in the exact triple (\ref{univ FT}) twisted by  $\op3(c-a-b)\boxtimes\calo_{\mathbf{T}}$ and lifted onto 
$\p3\times\mathbf{V}$ as
$$
i:\ (\calo_{\p3}\boxtimes\calo_{\mathbf{T}}(1))_{\mathbf{V}}
\to\underline{\mathbf{F}}_{\mathbf{V}}(c-a-b)
\simeq\mathbf{F}_{\mathbf{V}}(c-a-b).
$$
Consider the diagram of natural projections
$$
\xymatrix{
\p3\times\mathbf{X}\ar[d]_{p} & \p3\times\mathbf{V} \ar[l]_{\mathrm{id}_{\p3}\times\mathbf{g}} \ar[d]^{\mathbf{p}} \\
\mathbf{X} & \mathbf{V}\ar[l]_-{\mathbf{g}}}
$$
and apply to the monomorphism $i$ the functor $\mathbf{p}_*$.
We obtain a subbundle morphism
$$
\iota:\ \Theta^*\calo_{\mathbf{T}}(1)\to
\mathbf{p}_*\mathbf{F}_{\mathbf{V}}(c-a-b),\ \ \ \ \ 
\Theta:=\lambda\circ\boldsymbol{\Phi.}
$$ 
Note that 
$\mathbf{p}_*\mathbf{F}_{\mathbf{V}}(c-a-b)$ is a locally
free sheaf (cf. (\ref{rk=})) for which the base change yields
an isomorphism
$$
\mathbf{p}_*\mathbf{F}_{\mathbf{Y}}(c-a-b)\simeq
\mathbf{g}^*p_*\mathbf{F}(c-a-b),
$$ 
hence an epimorphism of locally free sheaves
$$
\mathbf{g}^*(p_*\mathbf{F}(c-a-b))^{\vee}\twoheadrightarrow
\Theta^*\calo_{\mathbf{T}}(-1)
$$
defined as the composition
$$
\epsilon_{\mathbf{V}}:\ \mathbf{g}^*(p_*\mathbf{F}(c-a-b))^{\vee}\simeq
(\mathbf{g}^*p_*\mathbf{F}(c-a-b))^{\vee}\simeq
(\mathbf{p}_*\mathbf{F}_{\mathbf{V}}(c-a-b))^{\vee}
\xrightarrow{\iota^{\vee}}
\Theta^*\calo_{\mathbf{T}}(-1).
$$
Comparing $\epsilon_{\mathbf{Y}}$ with the canonical epimorphism $\epsilon$ in (\ref{can epi}), we obtain by 
the universal property of the projective bundle  $\boldsymbol{\pi}:\mathbf{W}\to\mathbf{X}$ in (\ref{defn W}) that there exists a morphism 
$\mathbf{h}:\ \mathbf{V}\to\mathbf{W}$
satisfying the first relation (\ref{two compns}) and such that
$\mathbf{h}^*\epsilon=\epsilon_{\mathbf{V}}$,\ \  $\mathbf{h}^*\calo_{\mathbf{W}}(1)\simeq
\Theta^*\calo_{\mathbf{T}}(-1)$.
By construction, the morphism $\mathbf{h}$ is $PGL(N_m)$-equivariant, so that it descends to the morphism
$h:\ V\to W$ satisfying the last two relations in (\ref{two compns}).

Remark that, by (\ref{diag f,fB}),  $\mathbf{f}_V=\mathbf{\Phi}\circ\mathbf{f}$.
Therefore, from (\ref{E from tilde E}) 
we obtain 
$\mathbf{E}_{\mathbf{W}}\simeq
(\underline{\mathbf{E}}_{\mathbf{V}})_{\mathbf{W}}=
(\mathrm{id}_{\p3}\times
\mathbf{f})^*\underline{\mathbf{E}}_{\mathbf{V}}$.
This together with (\ref{EY underline vs EY}) yields:
\begin{equation}\label{twice EW}
\mathbf{E}_{\mathbf{W}}\simeq(\mathrm{id}_{\p3}\times
(\mathbf{h}\circ\mathbf{f}))^*\mathbf{E}_{\mathbf{W}}.
\end{equation}
Now a standard argument shows that
\begin{equation}\label{h circ f}
\mathbf{h}\circ\mathbf{f}=\mathrm{id}_{\mathbf{W}}.
\end{equation}
Indeed, consider the Quot-scheme 
\begin{equation}\label{Quot W}
Q_{\mathbf{W}}:=
\mathrm{Quot}_{\p3\times\mathbf{W}/\mathbf{W}}
(\boldsymbol{\mathcal{B}}\boxtimes\calo_{\mathbf{W}},P)\simeq
\mathbf{W}\times Q
\end{equation}
and the embedding
$$
\Delta=(\mathrm{id},\eta):\ \mathbf{W}\to Q_{\mathbf{W}},\ \mathbf{w}\mapsto
(\mathbf{w},\eta(\mathbf{w})),
$$
where the morphism $\eta$ is defined in (\ref{eta}).
Then in view of the universal property of $Q_{\mathbf{W}}$ the relation (\ref{twice EW}) shows that
the composition 
$\mathbf{W}\xrightarrow{\mathbf{h}\circ\mathbf{f}}
\mathbf{W}\xrightarrow{\Delta}Q_{\mathbf{W}}$
coincides with $\Delta$. Hence, since $\Delta$ is an embedding,
(\ref{h circ f}) follows.

Similar to (\ref{h circ f}) one shows that \begin{equation}\label{f circ h}
\mathbf{f}\circ\mathbf{h}=\mathrm{id}_{\mathbf{V}}. 
\end{equation}
(For this, use (\ref{E from tilde E}) to obtain, similar to (\ref{twice EW}), an isomorphism
$\mathbf{E}_{\mathbf{V}}\simeq(\mathrm{id}_{\p3}\times
(\mathbf{f}\circ\mathbf{h}))^*\mathbf{E}_{\mathbf{V}}$,
and then argue as in (\ref{Quot W}), with $Q_{\mathbf{W}}$ 
substituted by $Q_{\mathbf{V}}$, to achieve
(\ref{f circ h}).) From (\ref{h circ f}) and (\ref{f circ h})
it follows that $\mathbf{h}=\mathbf{f}^{-1}$. In particular,
$\mathbf{h}$ is a $PGL(N_m)$-equivariant isomorphism, and we obtain a cartesian diagram of principal $PGL(N_m)$-bundles
\begin{equation}\label{diag bf h, h}
\xymatrix{\mathbf{W}\ar[d]_{\Phi} & \mathbf{Y} \ar[l]_-{\mathbf{h}}^-{\simeq} \ar[d]^{\mathbf{\Phi}} \\
W & B\ar[l]_-{h}.}
\end{equation}
Whence, since $\mathbf{h}$ is an inverse to $\mathbf{f}$, the morphism $h$ is an isomorphism inverse to $f$. Note that $h$ is pointwise just the map $(S,C,\xi,\mathbf{k}\psi)\mapsto
([E],S,C)$ given in \eqref{set-theoretic def of h}.

(ii) Since $W\simeq V$, the stable rationality of $N$ now outcomes from the rationality of $V$ (see Remark \ref{global triple E,F}(ii)) and the local triviality of the $\mathbb{P}^{m}$-fibration $\pi:W\to X$ (Theorem \ref{Thm 4}(ii)) and of the $\mathbb{P}^{\tau}$-fibration $\theta:X\to N$ (Theorem \ref{Thm 2}(i)).
In addition, the isomorphism $f:W\xrightarrow{\simeq}V$ yields 
the desired family $\underline{\mathbf{E}}_{W}=(\mathrm{id}_{\p3}\times f)^*\underline{\mathbf{E}}_{V}$ of generalized null 
correlation bundles on $\p3\times W$ for which  in view of the relation \eqref{E from tilde E} 
the corresponding modular morphism $W\to N$ is 
just the composition of locally trivial projective bundles 
$\pi:W\to X$ and $\theta:X\to N$. 

(iii) From statement (i) and formulas (\ref{dim W tau}) and (\ref{dim X-dim B}) it follows that
\begin{equation}\label{formula for tau}
\tau=\delta(e,a,b,c)+t(e,a,b)-m(e,a,b,c).
\end{equation}
This together with (\ref{t(e)}), (\ref{formula for m}), (\ref{h0(F...)=1}) and (\ref{delta}) shows that, under the conditions $(e,a)\ne(0,0)$, $c>2a+b-e$ and $b>a$, one has
$$
\tau=0.
$$ 
Therefore, by Theorem \ref{Thm 4}(ii) (see \eqref{isom pi}) and Theorem \ref{Thm 2}(i) $W\underset{\simeq}{\xrightarrow{\pi}} X\xrightarrow{\theta}
N(e,a,b,c)$ is a $\mathbb{P}^0$-fibration, hence an isomorphism.
Therefore, by the rationality of $V\simeq W$, $N(e,a,b,c)=
\overline{V}$ is rational. 

In addition, $\underline{\mathbf{E}}_{W}\simeq\mathbf{E}$ is a 
universal family of generalized null correlation bundles over 
$N$. This yields that the scheme $N$ together with the universal 
family $\mathbf{E}$ over it is a fine moduli space in the  sense 
that it represents the functor $F:\mathrm{(Schemes)^0}\to
\mathrm{Sets}$ defined in the following usual way. For a given 
scheme $X$, $F(X)$ is the set of equivalence classes of flat 
families with base $X$ of generalized null correlation bundles 
on $\mathbb{P}^3$ belonging to $N$. Recall that, by definition, 
the two families $\mathcal{E}$ and $\mathcal{E'}$ over $X$ are 
equivalent if they are isomorphic up to a twist by a pullback of 
a line bundle from $X$. Thus, to the equivalence class 
$\{\mathcal{E}\}\in F(X)$ of a family $\mathcal{E}$ there corresponds a morhism $X\to N$ such that $\{\mathcal{E}\}=
\{\mathbf{E}_X\}$. 
\end{proof}

From Theorem \ref{main Thm} and the result of L.~Ein \cite{Ein} now immediately follows
\begin{corollary}\label{Cor}
For both $e=0$ and $e=-1$, the union of the spaces $M(e,n)$ over all $n\ge1$ contains an infinite series of rational components.
\end{corollary}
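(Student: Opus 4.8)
The plan is to deduce Corollary \ref{Cor} almost immediately from Theorem \ref{main Thm}(ii)--(iii) together with Ein's theorem that each $\overline{N}(e,a,b,c)$ is an honest irreducible component of $M(e,n)$ for $n=c^2-a^2-b^2-e(c-a-b)$. The only content to produce is the existence, for each of $e=0$ and $e=-1$, of an infinite family of admissible quadruples $(e,a,b,c)$ with $b\ge a\ge 0$, $c>a+b$, satisfying the rationality hypotheses of Theorem \ref{main Thm}(iii), namely $(e,a)\ne(0,0)$, $b>a$, and $c>2a+b-e$, and such that the resulting second Chern classes $n$ are pairwise distinct (indeed tending to infinity), so that these rational components land in infinitely many distinct moduli spaces $M(e,n)$.

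\textbf{Step 1: choice of an explicit infinite family.} First I would fix, for $e=0$, the parameters $a=1$ and $b=2$, so that $(e,a)=(0,1)\ne(0,0)$ and $b>a$ hold automatically, and then let $c$ range over all integers with $c>2a+b-e=4$, i.e. $c\ge 5$; note $c>a+b=3$ is then automatic. For $e=-1$, I would similarly fix $a=0$ and $b=1$, so that $(e,a)=(-1,0)\ne(0,0)$ and $b>a$ hold, $c>a+b=1$, and the rationality condition $c>2a+b-e=2$ becomes $c\ge 3$; one checks that for such parameters $n=c^2-a^2-b^2-e(c-a-b)=c^2-1+(c-1)=c^2+c-2$ is even, as required for nonemptiness of $M(-1,n)$, whenever $c$ is of the appropriate parity (e.g. $c$ odd gives $c^2+c-2\equiv 0\pmod 2$), and in fact $c^2+c-2$ is always even. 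In both cases the value $n=n(c)$ is a quadratic polynomial in $c$ with positive leading coefficient, hence strictly increasing for $c$ large and so taking infinitely many distinct values as $c\to\infty$.

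\textbf{Step 2: invoke the two cited results.} For each such quadruple, Theorem \ref{main Thm}(iii) gives that $\overline{N}(e,a,b,c)$ is a \emph{rational} variety, while \cite{Ein} guarantees that $\overline{N}(e,a,b,c)$ is an irreducible component of $M(e,n(c))$. Since distinct values of $c$ in the chosen family yield distinct values of $n(c)$ for $c$ sufficiently large, we obtain, for each of $e=0$ and $e=-1$, infinitely many pairwise non-equal values of $n$ for which $M(e,n)$ contains a rational component, and hence the union $\bigcup_{n\ge 1}M(e,n)$ contains an infinite series of rational components. (One could equally well take the full range $c>2a+b-e$ rather than restricting to large $c$, after a one-line check that $n(c)$ is injective on this range.)

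\textbf{Main obstacle.} There is essentially no obstacle here: the corollary is a bookkeeping consequence of the main theorem, so the only thing to be careful about is verifying that the chosen infinite family of quadruples simultaneously satisfies all of Ein's admissibility constraints ($b\ge a\ge 0$, $c>a+b$), the parity constraint on $n$ in the case $e=-1$, and the rationality hypotheses of Theorem \ref{main Thm}(iii), and that the map $c\mapsto n(c)$ separates infinitely many values of $n$; all of these are immediate for the explicit choices above. I would therefore present the argument in two or three sentences, citing Theorem \ref{main Thm}(iii) and \cite{Ein} and exhibiting the family $\{(0,1,2,c):c\ge 5\}$ and $\{(-1,0,1,c):c\ge 3\}$.
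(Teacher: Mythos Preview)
Your proposal is correct and follows the same approach as the paper, which simply states that the corollary follows immediately from Theorem \ref{main Thm} and Ein's result \cite{Ein} without spelling out explicit families. Your added care in exhibiting concrete quadruples and verifying all the constraints (including the parity of $n$ for $e=-1$) is a welcome expansion of what the paper leaves implicit.
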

The following remarks are in order.

\begin{remark}\label{fine moduli}
Fine moduli for $n$ even. There is a well-known sufficient 
condition for the (given component of the) Gieseker-Maruyama 
moduli space to be fine -- see \cite[Cor. 4.6.6]{HL}. In case 
of $M(0,n)$ with $n$ even this condition fails, and there were 
no known examples of components of
$M(0,n)$ when these moduli components were fine moduli spaces. (On the contrary, there are known certain components of 
$M(0,n)$ for $n$ even, e. g., the instanton components which are not fine -- see \cite{HN}.) Theorem \ref{main Thm}(ii) provides a series of fine (open dense subsets of) moduli components $N(e,a,b,c)$ for $c>2a+b-e,\ b>a,\ (e,a)\ne(0,0)$, and $n=c^2-a^2-b^2$ even, this series clearly being infinite -- see \cite{KOT}.
\end{remark}

\begin{remark}\label{V(k,l)}
In 1984 V.~K.~Vedernikov \cite{V1} constructed, 
for $1\le l\le k$, a family $V_1(k,l)\subset 
M(0,2kl+2l-l^2)$;  for $1\le2l\le k$, a family 
$V_2(k,l)\subset M(0,k^2+2k+1-l^2)$; for $1\le2l\le k+2$, a 
family $V_3(k,l)k^2+3k+2+2l-2l^2)$. Later in 1987 (see 
\cite{V2}), he constructed one more family, $V_4(k)\subset 
M(0,(k+1)^2)$ for $k\ge1$.
From the results of L.~Ein, 1988, see \cite{Ein}, it follows that Ein components $N(e,a,b,c)$ with approriate $a,b,c$ contain these Vedernikov's families $V_1(k,l)$ and $V_4(k)$, respectively,  $V_2(k,l)$ and  $V_3(k,l)$, as their open dense subsets in special cases when $e=a=0$, respectively, $a=b$. More precisely,
\begin{equation}\label{Ein=Ved}
\begin{split}
& \overline{V_1(k,l)}=N(0,a,b,c)\ \ \ \ \ {for}\ \ \  a=0,\ \ b=k+1-l,\ \ c=k+1,\\
& \overline{V_2(k,l)}=N(0,a,b,c)\ \ \ \ \ {for}\ \ \  a=b=l,\ \ c=k+1,\\
& \overline{V_3(k,l)}=N(-1,a,b,c)\ \ \ {for}\ \ \  a=b=l-1,\ \ c=k+1,\\
& \overline{V_4(k)}=N(0,a,b,c)\ \ \ \ \ \ \ {for}\ \ \  a=b=0,\ \ \ c=k+1.\\
\end{split}
\end{equation}
In \cite{V1}, it is asserted that $V_1(k,l)$ is rational.
However, the construction of rationality of $V_1(k,l)$ presented in \cite[Section 3]{V1} coincides with ours and thus, by Theorem \ref{main Thm}, yields only stable rationality of $V_1(k,l)$. Indeed, in this case, $\tau=0$ by \eqref{formula for tau}, but $m=m(0,0,k+1-l,k+1)=1$ by \eqref{formula for m} and \eqref{h0(F...)=1}, so that, $\pi:B_{\tau}\to V_1(k,l)$ is a locally trivial $\mathbb{P}^1$-bundle with $B_{\tau}$ rational. So the problem of rationality of $V_1(k,l)$ remains open.

The construction of rationality of $V_2(k,l)$ provided
in \cite[Sections 5-6]{V1} differs from ours. According to Theorem \ref{main Thm}, the rationality of $V_2(k,l)$ is covered by our result in the range $k\ge 3l\ge3$ and, respectively, not covered in the range $2\le2l\le k\le3l-1$.

In \cite[Section 7]{V1}, the rationality of $V_3(k,l)$ is asserted without proof. On the other hand, in this case the rationality (respectively, stable rationality) of $V_3(k,l)$ follows from Theorem \ref{main Thm} for $k\ge3l-2$ (respectively, for $2l-2\le k\le3l-3$). 

Last, the rationality of $V_4(k)$ is proved in \cite{V2}. It is not covered by Theorem \ref{main Thm}. Indeed, in this case we obtain from \eqref{formula for m} and \eqref{h0(F...)=1} that $m=2$, and Theorem \ref{main Thm} yields stable rationality of $V_4(k)$.

Summarizing the above and using \eqref{Ein=Ved}, we conclude that the result of Theorem \ref{main Thm} covers Vedernikov's (proven) results in case $e=0,\ a=b>0,\ c>3a$ and improves them in case $e=a=0,\ b>0$. 
\end{remark}
\begin{remark}\label{big families}
As it is known \cite[Prop. 3.1]{Rao}, \cite{Ein}, the 
cohomology module $H_*^1(E)$ of a generalized null correlation 
bundle $[E]\in N_{\mathrm{nc}}$ has one generator as 
a graded module over $\mathbf{k}[x_0,x_1,x_2,x_3]$. Using 
this, A.~P.~Rao in \cite[Prop. 3.1 and Remark 3.2]{Rao} 
constructed big enough rational families $B$ of generalized 
null correlation bundles from $N_{\mathrm{nc}}$ with 
a given cohomology module $H_*^1(E)$. It follows that 
$N_{\mathrm{nc}}$ can be filled by unirational varieties $\Phi(B)$ of dimension big enough, where $\Phi:\ B\to 
N_{\mathrm{nc}}$ is the modular morphism. This shows 
that $N_{\mathrm{nc}}$ is at least rationally connected (which also follows from their stable rationality), and it possibly might give an alternative approach to the problem of rationality of Ein components.
\end{remark}

\section{Components of the moduli space $M(e,n)$ for small $n$}\label{section 8}

\vspace{2mm}

In this section, we enumerate the known components (including the Ein components) of the Gieseker-Maruyama moduli space  $M(e,n)$ for small values of  $n$, namely, for $n\le20$ in both cases (i) $e=0$ and (ii) $e=-1$. We specify those of these components which are rational, respectively, stably rational. Their dimensions are also given. 

\vspace{2mm}
(i) $e=0$. The complete description of all the components of $M(0,n)$ is currently known only for $n\le5$.

\vspace{2mm}
(i.1) $M(0,1)$ is irreducible: $M(0,1)\simeq\mathbb{P}^5\setminus G$, where $G$ is the Grassmannian $Gr(2,4)$ embedded in
$\mathbb{P}^5$ by Pl\"ucker -- see, e.g., \cite{{H-vb}} or \cite{OSS}. Here $M(0,1)$ is an Ein component with $a=b=0,\ c=1$.

(i.2) $M(0,2)$ is an irreducible 13-dimensional rational 
variety, and any sheaf in $M(0,2)$ is an instanton bundle -- 
see \cite[Section 9]{H-vb}. Note that $M(0,2)$ is not an Ein 
component. 
 
(i.3) $M(0,3)$ consists of two rational irreducible 21-dimensional components: the instanton component $I_3$ any sheaf of which is an instanton bundle, and the Ein component $\overline{N}(0,0,1,2)$ any sheaf of which is a generalized null correlation bundle, i. e. $\overline{N}(0,0,1,2)=\overline{N}(0,0,1,2)^{\mathrm{nc}}$ -- see
\cite{ES}.

(i.4) $M(0,4)$ consists of two irreducible 29-dimensional components: the instanton component $I_4$ any sheaf of which is a mathematical instanton bundle with spectrum $(0,0,0,0)$, and the Ein component $\overline{N}(0,0,0,2)$ -- see \cite{B}, \cite{B1}, \cite{Ch}, \cite{HR}. The rationality of $\overline{N}(0,0,0,2)$ is proved in \cite{Ch} and reproved in  \cite{V2} by another method. It is also shown in \cite{Ch} that $\overline{N}(0,0,0,2)\setminus N_{\mathrm{nc}}\ne\emptyset$.

(i.5) $M(0,5)$ has three irreducible components, according to a recent result of C.~Almeida, M.~Jardim, A.~Tikhomirov and S.~Tikhomirov \cite{AJTT}. The first one is the 37-dimensional rational instanton component $I_5$ \cite{CTT}, \cite{T1}, \cite{K}, a general sheaf of which is a mathematical instanton bundle.
The next one is the 40-dimensional Ein component $\overline{N}(0,0,2,3)$  -- see \cite{Ein}, \cite[Theorem 4.7]{ES}, \cite{HR}, and it coincides with the component $Q_2$ of $M(0,5)$ introduced by
Ellingsrud and Str\o mme (we use the notation from Section \ref{section 1}). This component is stably rational by Theorem \ref{main Thm}. (A weaker statement about unirationality of $\overline{N}(0,0,2,3)=Q_2$ was mentioned in Section \ref{section 1}.) 
The third one is a 37-dimensional component $M_b$ described as the closure in $M(0,5)$ of the set $\{[E]\in M(0,5)\ |\ E$ is a cohomology bundle of a monad of the type $0\to\op3(-2)\oplus\op3(-1)\to 6\op3\to\op3(1)\oplus\op3(2)\to0\}$.

(i.6) $M(0,6)$ contains the instanton component $I_6$ of dimension 45 (see \cite{T2}) and at least one more component of
dimension $\ge45$ which contains a (possibly open) locally closed subset $M_6=\{[E]\in M(0,6)\ |\ E$ is the cohomology bundle of a monad $0\to2\op3(-1)\oplus\op3(-2)\to8\op3\to2\op3(1)
\oplus\op3(2)\to0\}$ -- see \cite[Table 5.3, $c_2=6$,\ (2,i)]{HR},
where $\dim M_6=45$ by Barth's formula \cite[p. 216]{B}. However,
$M(0,6)$ does not contain Ein components, since there are no integer solutions for $a,b,c$ satisfying the conditions $b\ge a\ge0,\ c>a+b,\ c^2-a^2-b^2=6$ -- see \cite[Section 2]{KOT}.

(i.7) $M(0,7)$ contains at least four irreducible components. They are: the instanton component $I_7$ of dimension 53 \cite{T1}, the two Ein components $\overline{N}(0,0,3,4)$ and $\overline{N}(0,1,1,3)$ of dimensions 65 and 55, respectively, and a component of dimension $\ge53$ which contains a locally closed subset $M_7=\{[E]\in M(0,7)\ |\ E$ is the cohomology bundle of a monad $0\to3\op3(-1)\oplus\op3(-2)\to10\op3\to3\op3(1)
\oplus\op3(2)\to0\}$ -- see \cite[Table 5.3, case $c_2=7$,\ (2,i)]{HR}, where $\dim M_7=52$ by Barth's formula [loc. cit.]. Here the Ein components $\overline{N}(0,0,3,4)$ and $\overline{N}(0,1,1,3)$ are stably rational by Theorem \ref{main Thm}, and there are no other Ein components in $M(0,7)$ by \cite[Section 2]{KOT}.

(i.8) $M(0,8)$ contains at least three irreducible components. They are: the instanton component $I_8$ of dimension 61 \cite{T2}, the Ein component $\overline{N}(0,0,1,3)$ of dimension 62, and a component of dimension $\ge61$ which contains a (possibly open) locally closed subset $M_8=\{[E]\in M(0,8)\ |\ E$ is the cohomology bundle of a monad 
$0\to4\op3(-1)\oplus\op3(-2)\to12\op3\to4\op3(1)\oplus\op3(2)\to
0\}$ -- see \cite[Table 5.3, case $c_2=8$,\ (2,i)]{HR},
where $\dim M_8=61$ by Barth's formula. Here the Ein
component $\overline{N}(0,0,1,3)$ is stably rational by Theorem \ref{main Thm}, and there are no other Ein components in $M(0,8)$ by \cite[Section 2]{KOT}.

We complete, using \cite[Main Theorem 1]{AJTT}, \cite[Section 2]{KOT} and \cite[Theorem 3]{TTV}, the list of all
currently known irreducible components of $M(0,n)$ for $9\le 
n\le20$. For these values of $n$, besides the Ein components 
and the instanton components $I_n$ of dimension $8n-3,\ 9\le 
n\le 20$, (the rationality or stable rationality of these 
$I_n$'s is unknown), the known irreducible components are 6 
more components. They are: 
1) component of dimension 69 of $M(0,9)$, 2) component of dimension 77 of $M(0,10)$, 3) component of dimension 85 of $M(0,11)$, 4) component of dimension 93 of $M(0,12)$,
5) component of dimension 135 of $M(0,17)$, 6) component of dimension 141 of $M(0,18)$. 

Below we list the Ein components of $M(0,n)$ for $9\le n\le20$. Their rationality or stable rationality follows 
from Theorem \ref{main Thm} and Remark \ref{V(k,l)}, and their 
dimensions are given by (\ref{dim N 1}).\\
$n=9:$\ $\overline{N}(0,0,0,3)$ rational of dimension 69, $\overline{N}(0,0,4,5)$ stably rational of dimension 96;\\
$n=10:$\ no Ein components;\\ 
$n=11:$\ $\overline{N}(0,0,5,6)$ stably rational of dimension 133, $\overline{N}(0,1,2,4)$ stably rational of dimension 98;\\
$n=12:$\ $\overline{N}(0,0,2,4)$ stably rational of dimension 104;\\
$n=13:$\ $\overline{N}(0,0,6,7)$ stably rational of dimension 176;\\
$n=14:$\ $\overline{N}(0,1,1,4)$ rational of dimension 117;\\ 
$n=15:$\ $\overline{N}(0,0,1,4)$ stably rational of dimension 123, $\overline{N}(0,0,7,8)$ stably rational of dimension 225, $\overline{N}(0,1,3,5)$ stably rational of dimension 152;\\
$n=16:$\ $\overline{N}(0,0,0,4)$ rational of dimension 129, $\overline{N}(0,0,3,5)$ stably rational of dimension 158;\\ 
$n=17:$\ $\overline{N}(0,0,8,9)$ stably rational of dimension 280, $\overline{N}(0,2,2,5)$ stably rational of dimension 170;\\ 
$n=18:$\ no Ein components;\\ 
$n=19:$\ $\overline{N}(0,0,9,10)$ stably rational of dimension 341, 
$\overline{N}(0,1,4,6)$ stably rational of dimension 218;\\ 
$n=20:$\ $\overline{N}(0,0,4,6)$ stably rational of dimension 224, 
$\overline{N}(0,1, 2, 5)$ rational of dimension 187.

\vspace{3mm}

(ii) $e=-1$. The scheme $M(-1,n)$ is known to be nonempty only for $n=2m,\ m\ge1$ \cite{H-r}. Moreover,
Hartshorne in \cite{H-r} produced a family $H_m$ of bundles  with minimal spectrum from $M(-1,2m)$, using the Serre construction similar to that of 'tHooft instanton bundles from $I_m$. (For the notion of spectrum see \cite[Section 7]{H-r}.) Hartshorne showed that, for each $m$, the family $H_m$ is contained in a unique irreducible $(16m-5)$-dimensional component of $M(-1,2m)$ which is smooth along $H_m$. Denote this component by $Y_{2m}$. 

Now observe the spaces $M(-1,2m)$ for $m=1,2,3$.

(ii.1) $M(-1,2)=Y_2$ is an irreducible rational variety of dimension 11 \cite{HS}.

(ii.2) $M(-1,4)$ has two irreducible components: the rational component $Y_4$ of dimension 27, and the rational component $M$ of dimension 28 which consists of bundles with maximal spectrum \cite{BM}.

(ii.3) $M(-1,6)$ has at least three irreducible components: the component $Y_3$ of the expected dimension 43;
the Ein component $\overline{N}(-1,0,0,2)$ which, by Theorem \ref{main Thm}, is a rational variety of the expected dimension 43;
the Ein component $\overline{N}(-1,0,2,3)$ which, by Theorem \ref{main Thm}, is a stably rational variety of dimension 50. Note that these two Ein components differ by the spectra of bundles therein (see \cite{Ts}). Besides, as it follows from \cite{KOT}, there are no other Ein components in $M(-1,6)$.

We complete  the list of all known irreducible components of
$M(-1,n)$ for $8\le n\le20$, $n$ even. Besides the components $Y_n$ of dimension $8n-5$, the rationality or stable rationality of which is unknown, these are Ein components of $M(-1,n)$. (As above, here \cite[Section 2]{KOT}, Theorem \ref{main Thm}, Remark \ref{V(k,l)}, and  (\ref{dim N 1}) are used.)\\
$n=8:$\ $\overline{N}(-1,0,3,4)$ stably rational of dimension 78,
$\overline{N}(-1,1,1,3)$ stably rational of dimension 67;\\
$n=10:$\ $\overline{N}(-1,0,1,3)$ rational of dimension 80, 
  $\overline{N}(-1,0,4,5)$ stably rational of dimension 112;\\
$n=12:$\ $\overline{N}(-1,0,0,3)$ rational of dimension 93, 
 $\overline{N}(-1,0,5,6)$ stably rational of dimension 152, 
 $\overline{N}(-1,1,2,4)$ stably rational of dimension 116;\\
$n=14:$\ $\overline{N}(-1,0,2,4)$ rational of dimension 128, 
  $\overline{N}(-1,0,6,7)$ stably rational of dimension 198;\\
$n=16:$\ $\overline{N}(-1,0,7,8)$ stably rational of dimension 250, 
  $\overline{N}(-1,1,1,4)$ stably rational of dimension 143, 
  $\overline{N}(-1,1,3,5)$ stably rational of dimension 176;\\ 
$n=18:$\ $\overline{N}(-1,0,1,4)$ rational of dimension 154, 
 $\overline{N}(-1,0,3,5)$ rational of dimension 188, 
 $\overline{N}(-1,0,8,9)$ stably rational of dimension 308, 
  $\overline{N}(-1,2,2,5)$ stably rational of dimension 197;\\
$n=20:$\ $\overline{N}(-1,0,0,4)$ rational of dimension 165,
  $\overline{N}(-1,0,9,10)$ stably rational of dimension 372, 
  $\overline{N}(-1,1,4,6)$ stably rational of dimension 248.

\end{document}